\numberwithin{equation}{section}
\theoremstyle{plain}
\newtheorem{theorem}{Theorem}[section]
\newtheorem{corollary}{Corollary}[section]
\theoremstyle{definition}
\newtheorem{definition}{Definition}[section]
\theoremstyle{remark}
\newtheorem{rk}{Remark}[section]
\let\expandafter\oldproof\csname\string\proof\endcsname
\let\oldendproof\endproof
\renewenvironment{proof}[1][\proofname]{%
  \oldproof[\noindent\textbf{#1.} ]%
}{\oldendproof}
\newcommand{\be}{\begin{equation}}
\newcommand{\ee}{\end{equation}}
\newcommand{\by}{\begin{eqnarray*}}
\newcommand{\ey}{\end{eqnarray*}}
\DeclareMathOperator*{\argmin}{arg\,min}
\renewcommand{\leq}{\leqslant}
\renewcommand{\geq}{\geqslant}
\definecolor{dark-red}{rgb}{0.4,0.15,0.15}
\definecolor{dark-blue}{rgb}{0.15,0.15,0.4}
\definecolor{medium-blue}{rgb}{0,0,0.5}
\title[New reversibilizations via geometric projections, generalized mean and convex $*$-conjugate]{Systematic approaches to generate reversiblizations of Markov chains}
\author{Michael C.H. Choi}
\address{Department of Statistics and Data Science and Yale-NUS College, National University of Singapore, Singapore}
\email{mchchoi@nus.edu.sg}
\author{Geoffrey Wolfer}
\address{RIKEN Center for AI Project, Tokyo, Japan}
\email{geoffrey.wolfer@riken.jp}
\begin{document}

\date{\today}
\maketitle

\begin{abstract}
	Given a target distribution $\pi$ and an arbitrary Markov infinitesimal generator $L$ on a finite state space $\mathcal{X}$, we develop three structured and inter-related approaches to generate new reversiblizations from $L$. The first approach hinges on a geometric perspective, in which we view reversiblizations as projections onto the space of $\pi$-reversible generators under suitable information divergences such as $f$-divergences. With different choices of functions $f$, we not only recover nearly all established reversiblizations but also unravel and generate new reversiblizations. Along the way, we unveil interesting geometric results such as bisection properties, Pythagorean identities, parallelogram laws and a Markov chain counterpart of the arithmetic-geometric-harmonic mean inequality governing these reversiblizations. This further serves as motivation for introducing the notion of information centroids of a sequence of Markov chains and to give conditions for their existence and uniqueness. Building upon the first approach, we view reversiblizations as generalized means. In this second approach, we construct new reversiblizations via different natural notions of generalized means
	such as the Cauchy mean or the dual mean. In the third approach, we combine the recently introduced locally-balanced Markov processes framework and the notion of convex $*$-conjugate in the study of $f$-divergence. The latter offers a rich source of balancing functions to generate new reversiblizations.
	\smallskip
	
	\noindent \textbf{AMS 2010 subject classifications}: 60J27, 60J28, 94A17, 62B10
	
	\noindent \textbf{Keywords}: Metropolis-Hastings; reversiblizations; $f$-divergence; information geometry; generalized mean; symmetrization; information centroid; Barker proposal; balancing function; locally-balanced Markov processes
\end{abstract}

\tableofcontents


\section{Introduction}\label{sec:intro}

Given a target distribution $\pi$, and an arbitrary Markov infinitesimal generator $L$ on a finite state space $\mathcal{X}$, what are the different ways to reversiblize $L$, i.e. to transform $L$ so that it becomes $\pi$-reversible? In the classical text,  \cite{AF14} introduce three types of reversiblizations, namely the additive reversiblization, the multiplicative reversiblization in the discrete-time setting (i.e. a transition matrix $P$ multiplied by its $\pi$-dual, say $P^*$), and the Metropolis-Hastings reversiblization. \textcolor{black}{Higher order multiplicative reversiblizations have also been investigated in the literature, for instance in \cite{M97} for long-time convergence of simulated annealing, in \cite{Paulin15} for pseudo-spectral gap and concentration inequalities of non-reversible Markov chains, and the second Metropolis-Hastings reversiblization is proposed and investigated in \cite{Choi16,CH18}}. The geometric mean reversiblization is analyzed in \cite{DM09} in continuous-time and in \cite{WW21} in discrete-time, while the Barker proposal (which is in fact a harmonic mean reversiblization, see Section \ref{subsec:alphadiv} below) has recently enjoyed considerable interest in the Markov chain Monte Carlo literature in a series of papers \citep{Z20,LZ22,VLZ22}.

The study of reversiblizations is an important subject from at least the following three perspectives. First, reversiblizations, as a tool, allows us to study various properties of non-reversible Markov chains by analyzing their reversible counterpart. Owing to the absence of symmetry in the original non-reversible chain, we take advantage of the symmetrical properties of its reversiblized counterpart to effectively understand features of the original non-reversible chain such as its rate of convergence to equilibrium. This follows from the seminal paper by \cite{Fill91} and subsequent papers such as \cite{Paulin15,Choi16}, in which this spirit of reversiblizations is used to define the pseudo-spectral gap to analyze long-time convergence to $\pi$ under distances such as the total variation distance. Second, the study of reversiblizations may yield improved stochastic algorithms for sampling from $\pi$, a setting that commonly arises in applications such as Bayesian statistics. In the classical Metropolis-Hastings algorithm, one takes in a target distribution $\pi$ and a proposal chain with generator $L$ to transform $L$ into a generator that is reversible with respect to the target $\pi$. Thus, in a broad sense, sampling from $\pi$ amounts to reversiblizing a given proposal generator $L$, and being able to generate new reversiblizations may inspire design of improved stochastic algorithms, see for instance the second Metropolis-Hastings generator in \cite{CH18} or the Barker proposal in \cite{Z20,LZ22,VLZ22}. Third, new reversiblizations also give rise to new symmetrizations of non-symmetric and non-negative matrices or in general non-self-adjoint kernel operators. By taking $\pi$ to be the discrete uniform distribution on $\mathcal{X}$, this yields symmetrizations of the original non-symmetric and non-negative matrices. To the best of our knowledge, many of the new reversiblizations or symmetrizations proposed in subsequent sections of this manuscript have not yet been investigated in the linear algebra or functional analysis literature.

Despite the existence of numerous reversiblizations in literature, there is a lack of systematic approaches for generating new ones. In this paper, we introduce three structured methodologies that not only generate new reversiblizations but also recover most of the established ones. We summarize our main contributions as follow:

\begin{enumerate}
	\item \textbf{Generating reversiblizations via geometric projections.} This approach continues the line of work initiated in \cite{BD01,DM09,WW21}, in which reversiblizations are viewed as projections under information divergences such as $f$-divergences. The advantage of this approach is that we can recover all known reversiblizations in a unified framework. We also discover that the Barker proposal arises naturally as a projection under the $\chi^2$-divergence. Notable highlights of this approach include bisection properties, Pythagorean identities, parallelogram laws and a Markov chain counterpart of the arithmetic-geometric-harmonic mean (AM-GM-HM) inequality for various hitting time and mixing time parameters. We also introduce, visualize and characterize the notion of $f$ and $f^*$-projection centroids of a sequence of Markov chains.
	
	\item \textbf{Generating reversiblizations via generalized mean.} Capitalizing on the geometric approach, we realize that one can also broadly view reversiblizations as a suitable mean or average between $L$ and its $\pi$-dual $L_{\pi}$. In this approach, we generate new reversiblizations by investigating generalized notions of means such as the Cauchy mean or the dual mean reversiblizations. Unlike the geometric projection approach, the reversiblizations generated in this approach do not typically coincide with a quasi-arithmetic mean, and are usually based on the differences between $L$ and $L_{\pi}$.
	
	\item \textbf{Generating reversiblizations via balancing function and convex $f$.} The reversiblizations generated in the first two approaches all fall into the locally-balanced Markov processes framework. To adapt this framework to generate reversiblizations, it amounts to choosing a suitable balancing function, and a rich source of such balancing functions comes from a simple average between a convex $f$ and its convex $*$-conjugate $f^*$ (to be introduced in Section \ref{sec:prelim}).
\end{enumerate}


The rest of this paper is organized as follow. We begin our paper by introducing various notions and notations in Section \ref{sec:prelim}. We proceed to discuss the geometric projection approach to generate reversiblizations in Section \ref{sec:geninfog}. Within this section, we first discuss the bisection property, and we follow with an investigation of a range of commonly used $f$-divergences and the R\'{e}nyi-divergences. We state the Markov chain version of AM-GM-HM inequality in Section \ref{subsec:MCAMGMHM}, and the notion of $f$ and $f^*$-projection centroids of a sequence of Markov chains is given in Section \ref{subsec:centroid}. In Section \ref{sec:gmean}, we discuss the generalized mean approach to generate reversiblizations. We first introduce two broad classes of Cauchy mean reversiblizations such as the Stolarsky mean and the logarithmic mean reversiblizations in Section \ref{subsec:cauchy}.
In Section \ref{subsec:dual}, we then consider dual mean reversiblizations such as the dual power mean, the dual Stolarsky mean and the dual logarithmic mean. Finally, we combine the locally-balanced Markov processes framework with the convex $*$-conjugate in $f$-divergence to generate reversiblizations in Section \ref{sec:third}.

\section{Preliminaries}\label{sec:prelim}

Let $f : \mathbb{R}_+ \to \mathbb{R}_+$ be a convex function with $f(1) = 0$ that grows with at most polynomial order. Let $\mathcal{L}$ denote the set of Markov infinitesimal generators defined on a finite state space $\mathcal{X}$, that is, the set of $\mathcal{X} \times \mathcal{X}$ matrices with non-negative off-diagonal entries and zero row sums for all rows. Similarly, we write $\mathcal{L}(\pi) \subseteq \mathcal{L}$ to be the set of reversible generators with respect to a distribution $\pi$. We say that $L$ is $\pi$-stationary if $\pi L = 0$. Let $L_{\pi}$ be the $\pi$-dual of $L \in \mathcal{L}$ in the sense of \cite[Proposition $1.2$]{JK14} with $H(x,y) = \pi(y)$ for all $x,y \in \mathcal{X}$ therein
with off-diagonal entries defined to be, for $x \neq y$, 
$$L_{\pi}(x,y) = \dfrac{\pi(y)}{\pi(x)}L(y,x),$$
while the diagonal entries of $L_{\pi}$ are such that the row sums are zero for each row. In the special case when $L$ admits $\pi$ as its unique stationary distribution, then $L_{\pi} = L^*$, the $\ell^2(\pi)$ adjoint of $L$ or the time-reversal of $L$. Note that $\ell^2(\pi)$ is the usual weighted $\ell^2$ Hilbert space endowed with the inner product $\langle \cdot, \cdot \rangle_{\pi}$, see \eqref{eq:innerprod} below. Following the definition as in \cite{DM09}, given a fixed target $\pi$, for any two given Markov infinitesimal generators $M, L \in \mathcal{L}$, we define the $f$-divergence between $M$ and $L$ to be
\begin{align}\label{def:fdivML}
	D_f(M || L) :=  \sum_{x \in \mathcal{X}} \pi(x) \sum_{y \in \mathcal{X}\backslash\{x\}} L(x,y) f\left(\dfrac{M(x,y)}{L(x,y)}\right),
\end{align}
where the convention that \textcolor{black}{$0 f(a/0) = 0$ for $a \geq 0$} applies in the definition above. We remark that by requiring non-negativity of $f$, the definition of $f$-divergence between Markov generators is slightly more restrictive than the classical definition of $f$-divergence in information theory between probability measures, see e.g. \cite{SV16} and the references therein. For instance, \textcolor{black}{the mapping $t \mapsto t \ln t$} is not in the set while $f(t) = t \ln t - t + 1$ is in the set. Let $f^*$ be the convex $*$-conjugate (or simply conjugate) of $f$ defined to be $f^*(t) := tf(1/t)$ for $t > 0$, then it can readily be seen that
\begin{align*}
	D_f(M || L) = D_{f^*}(L || M),
\end{align*}
and $f^*$ is also convex with $f^*(1) = 0$. Thus, for convex $f$ that is self-conjugate, that is, $f^* = f$, the $f$-divergence as defined in \eqref{def:fdivML} is symmetric in its arguments. As a result, we can symmetrize a possibly non-symmetric $D_f$ into a symmetric one by considering $D_{(f+f^*)/2}$. For given $L, M \in \mathcal{L}$, it will also be convenient to define
\begin{align}\label{def:Dbar}
	\overline{D}_f(L||M) := D_f\left(L\bigg|\bigg|\frac{1}{2}(L+M)\right).
\end{align}
Information divergences that can be expressed by $\overline{D}_f$ include the Jensen-Shannon divergence and Vincze-Le Cam divergence, see Section \ref{subsec:JSdLCd}.

Given a general Markov generator $L$ which does not necessarily admit $\pi$ as its stationary distribution, we are interested in investigating the projection of $L$ onto the set $\mathcal{L}(\pi)$ with respect to the $f$-divergence $D_f$ as introduced earlier in \eqref{def:fdivML}. To this end, following the notions of reversible information projections introduced in \cite{WW21} for the Kullback-Leibler divergence in a discrete-time setting, we define analogously the notions of $f$-projection and $f^*$-projection with respect to $D_f$ to be
\begin{align}\label{def:emprojection}
	M^f = M^f(L,\pi) := \argmin_{M \in \mathcal{L}(\pi)} D_f(M || L), \quad M^{f^*} = M^{f^*}(L,\pi) := \argmin_{M \in \mathcal{L}(\pi)} D_f(L || M).
\end{align}
It is instructive to note that our notions of projection are with respect to a fixed target $\pi$, while in \cite{WW21} projections are onto the entire reversible set. In the context of Markov chain Monte Carlo, we are often given a target $\pi$ for instance a posterior distribution in a Bayesian model, and in this setting it is not at all restrictive to consider and investigate projections onto $\mathcal{L}(\pi)$.

In the subsequent sections, we shall specialize in various common choices of functions $f$, and investigate the corresponding projections $M^f$ and $M^{f^*}$. It turns out that in most of these cases, these two projections can be expressed as a certain power mean of $L$ and $L_{\pi}$. We shall define, for $x \neq y \in \mathcal{X}$ and $p \in \mathbb{R}\backslash\{0\}$,
\begin{align}\label{def:Pp}
	P_{p}(x,y) := \left(\dfrac{L(x,y)^{p} + L_{\pi}(x,y)^{p}}{2}\right)^{1/p},
\end{align}
and the diagonal entries of $P_{p}$ are such that the row sum is zero for all rows, that we call power mean reversiblizations. Note that this mean also appears in \cite[equation $(2.6)$]{Amari07} in the context of $\alpha$-divergence for probability measures and is referred therein as the $\alpha$-mean. We check that $P_p$ is indeed $\pi$-reversible, since
\begin{align*}
	\pi(x) P_p(x,y) &= \left(\dfrac{(\pi(x) L(x,y))^{p} + (\pi(x) L_{\pi}(x,y))^{p}}{2}\right)^{1/p} \\
	&= \left(\dfrac{(\pi(y) L_{\pi}(y,x))^{p} + (\pi(y) L(y,x))^{p}}{2}\right)^{1/p} \\
	&= \pi(y) P_p(y,x),
\end{align*}
and hence the detailed balance condition is satisfied with $P_p$. We can also understand the limiting cases as
\begin{align*}
	P_0(x,y) &= \lim_{p \to 0} P_{p}(x,y) = \sqrt{L(x,y)L_{\pi}(x,y)}, \\
	P_{\infty}(x,y) &= \lim_{p \to \infty} P_{p}(x,y) = \max\{L(x,y),L_{\pi}(x,y)\}, \\
	P_{-\infty}(x,y) &= \lim_{p \to -\infty} P_{p}(x,y) = \min\{L(x,y),L_{\pi}(x,y)\},
\end{align*}
which are, respectively, the geometric mean reversiblization as studied in \cite{DM09} and in a discrete-time setting \cite{WW21}, $M_2$-reversiblization as proposed in \cite{Choi16}, and the classical Metropolis-Hastings reversiblization. We also call the case of $p = 1/3$ to be the Lorentz mean reversiblization as it is the Lorentz mean known in the literature \citep{Lin74}.

\section{Generating new reversiblizations via geometric projections and minimization of $f$-divergence}\label{sec:geninfog}

\subsection{A bisection property for $D_f$ and $\overline{D}_f$}

First, we present a bisection property which states that the information divergence as measured by $D_f$ is the same for the pair $(L,M)$ and $(L_{\pi},M_{\pi})$, where $M,L \in \mathcal{L}$ and we recall that $L_{\pi}$ (resp. $M_{\pi}$) is the $\pi$-dual of $L$ (resp.~$M$). This general result will be useful in proving various Pythagorean identities or bisection properties in subsequent sections.
\begin{theorem}[Bisection property of $D_f$]\label{thm:bisection}
	\textcolor{black}{Let $M,L \in \mathcal{L}$. Then we have
	\begin{align*}
		D_f(L||M) &= D_f(L_{\pi}||M_{\pi}).
	\end{align*}
	In particular, if $M \in \mathcal{L}(\pi)$ and $L \in \mathcal{L}$, this yields
	\begin{align*}
		D_f(L||M) &= D_f(L_{\pi}||M), \\
		D_f(M||L) &= D_f(M||L_{\pi}).
	\end{align*}}
\end{theorem}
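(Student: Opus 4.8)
The plan is to prove the main identity $D_f(L||M) = D_f(L_\pi||M_\pi)$ by direct computation, expanding both sides according to the definition \eqref{def:fdivML} and then exploiting the explicit form of the $\pi$-dual. The two ``in particular'' consequences will follow instantly once the main identity is in hand: recall that $M \in \mathcal{L}(\pi)$ means $\pi(x)M(x,y) = \pi(y)M(y,x)$, which is precisely the statement that $M_\pi = M$ on off-diagonal entries. Substituting $M_\pi = M$ into the main equation yields $D_f(L||M) = D_f(L_\pi||M)$, and applying the main identity with the roles of $L$ and $M$ interchanged (so that $D_f(M||L) = D_f(M_\pi||L_\pi)$) together with $M_\pi = M$ gives $D_f(M||L) = D_f(M||L_\pi)$.

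For the main identity, first I would write out the right-hand side using the definition, namely $D_f(L_\pi || M_\pi) = \sum_x \pi(x) \sum_{y\neq x} M_\pi(x,y) f(L_\pi(x,y)/M_\pi(x,y))$. The key observation is that the common prefactor $\pi(y)/\pi(x)$ appearing in both $L_\pi(x,y) = (\pi(y)/\pi(x))L(y,x)$ and $M_\pi(x,y) = (\pi(y)/\pi(x))M(y,x)$ cancels inside the argument of $f$, leaving $L_\pi(x,y)/M_\pi(x,y) = L(y,x)/M(y,x)$. Simultaneously the weight collapses as $\pi(x)M_\pi(x,y) = \pi(y)M(y,x)$. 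After these two substitutions the right-hand side becomes $\sum_x \sum_{y \neq x} \pi(y) M(y,x) f(L(y,x)/M(y,x))$, and relabelling the summation indices by swapping $x$ and $y$ recovers exactly $\sum_y \pi(y) \sum_{x \neq y} M(y,x) f(L(y,x)/M(y,x)) = D_f(L||M)$, completing the argument. Note that this relabelling argument works for every admissible $f$, so neither self-conjugacy nor the relation $D_f(M||L) = D_{f^*}(L||M)$ is needed.

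Since every manipulation is an exact algebraic identity on the off-diagonal entries, and the divergence never involves the diagonal entries of the generators (so the convention fixing the diagonals of $L_\pi$ and $M_\pi$ plays no role), there is essentially no analytic obstacle here. The only point requiring a little care is the boundary convention $0\,f(a/0)=0$: one should check that a summand indexed by $(x,y)$ in the dual sum vanishes exactly when $M(y,x)=0$, since its weight is $\pi(y)M(y,x)$, and that this matches the vanishing of the corresponding relabelled summand in the original sum. I expect this term-by-term bookkeeping, rather than any conceptual difficulty, to be the only place where attention is needed.
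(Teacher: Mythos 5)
Your proof is correct and is essentially the paper's own argument: both rest on the observation that the prefactor $\pi(y)/\pi(x)$ cancels inside the argument of $f$ while the weight collapses via $\pi(x)M_{\pi}(x,y)=\pi(y)M(y,x)$, followed by relabelling the summation indices (the paper starts from $D_f(L||M)$, you start from $D_f(L_{\pi}||M_{\pi})$, which is the same computation read in reverse). Your explicit treatment of the ``in particular'' consequences via $M_{\pi}=M$ and of the $0\,f(a/0)=0$ convention fills in details the paper leaves implicit, but introduces no new idea.
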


\begin{proof}
	\textcolor{black}{For the first equality, we calculate that
	\begin{align*}
		D_f(L||M) &= \sum_{x \neq y} \pi(x) M(x,y) f\left(\dfrac{L(x,y)}{M(x,y)}\right) = \sum_{x \neq y} \pi(y) M_{\pi}(y,x) f\left(\dfrac{L_{\pi}(y,x)}{M_{\pi}(y,x)}\right) = D_f(L_{\pi}||M_{\pi}).
	\end{align*}}
\end{proof}

 We proceed to prove an analogous bisection property for $\overline{D}_f$:
\begin{theorem}[Bisection property of $\overline{D}_f$]\label{thm:bisectionDbar}
	\textcolor{black}{Let $M,L \in \mathcal{L}$. Then we have
	\begin{align*}
		\overline{D}_f(L||M) &= \overline{D}_f(L_{\pi}||M_{\pi}).
	\end{align*}
	In particular, if $M \in \mathcal{L}(\pi)$ and $L \in \mathcal{L}$, this yields
	\begin{align*}
		\overline{D}_f(L||M) &= \overline{D}_f(L_{\pi}||M).
	\end{align*}}	
\end{theorem}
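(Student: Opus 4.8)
The plan is to deduce this statement as a direct corollary of the bisection property of $D_f$ from Theorem \ref{thm:bisection}, using the definition of $\overline{D}_f$ together with the fact that the $\pi$-dual operation is linear. First I would unpack the definition \eqref{def:Dbar}, writing $\overline{D}_f(L||M) = D_f(L||\tfrac{1}{2}(L+M))$. Here I would note that $\tfrac{1}{2}(L+M) \in \mathcal{L}$, since $\mathcal{L}$ is convex: both non-negativity of off-diagonal entries and the zero-row-sum property are preserved under averaging. Hence $D_f$ is well-defined on the pair $(L, \tfrac{1}{2}(L+M))$ and Theorem \ref{thm:bisection} applies to it.

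The key observation is that $L \mapsto L_\pi$ is linear and therefore commutes with the average $\tfrac{1}{2}(L+M)$. Indeed, from the off-diagonal formula $L_\pi(x,y) = \tfrac{\pi(y)}{\pi(x)} L(y,x)$, one checks directly that for $x \neq y$ the matrix $\left(\tfrac{1}{2}(L+M)\right)_\pi$ has off-diagonal entries $\tfrac{1}{2}(L_\pi(x,y) + M_\pi(x,y))$; since both $\left(\tfrac{1}{2}(L+M)\right)_\pi$ and $\tfrac{1}{2}(L_\pi + M_\pi)$ carry zero row sums and agree off the diagonal, they coincide. Applying Theorem \ref{thm:bisection} to the pair $(L, \tfrac{1}{2}(L+M))$ then yields
\begin{align*}
\overline{D}_f(L||M) &= D_f\left(L \bigg|\bigg| \tfrac{1}{2}(L+M)\right) = D_f\left(L_\pi \bigg|\bigg| \left(\tfrac{1}{2}(L+M)\right)_\pi\right) \\
&= D_f\left(L_\pi \bigg|\bigg| \tfrac{1}{2}(L_\pi + M_\pi)\right) = \overline{D}_f(L_\pi||M_\pi),
\end{align*}
which is the first claimed equality.

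For the particular case, I would simply recall that $M \in \mathcal{L}(\pi)$ is $\pi$-reversible, so its $\pi$-dual satisfies $M_\pi = M$. Substituting this into the identity just established gives $\overline{D}_f(L||M) = \overline{D}_f(L_\pi||M_\pi) = \overline{D}_f(L_\pi||M)$, as desired.

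I do not expect a genuine obstacle here: the argument is essentially a one-line consequence of Theorem \ref{thm:bisection}. The only point deserving care is the linearity of the $\pi$-dual, i.e.\ that it distributes over $\tfrac{1}{2}(L+M)$, but this follows at once from the linearity of the defining formula in $L$ and the uniqueness of the diagonal entries forced by the zero-row-sum constraint.
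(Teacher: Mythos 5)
Your proof is correct, but it takes a different route from the paper's. The paper proves the identity by a direct computation: it unpacks $\overline{D}_f(L||M)$ as the sum $\sum_{x \neq y} \pi(x)\,\tfrac{1}{2}(L(x,y)+M(x,y))\, f\bigl(L(x,y)/\tfrac{1}{2}(L(x,y)+M(x,y))\bigr)$, then reindexes using $\pi(x)L(x,y) = \pi(y)L_{\pi}(y,x)$ and $\pi(x)M(x,y) = \pi(y)M_{\pi}(y,x)$ to recognize the result as $\overline{D}_f(L_{\pi}||M_{\pi})$ — essentially repeating, for the mixture $\tfrac{1}{2}(L+M)$, the same manipulation used to prove Theorem \ref{thm:bisection}. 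You instead factor the argument through Theorem \ref{thm:bisection} itself, with the structural lemma that the $\pi$-dual map is linear (so it commutes with the average, $\bigl(\tfrac{1}{2}(L+M)\bigr)_{\pi} = \tfrac{1}{2}(L_{\pi}+M_{\pi})$), plus the observation that $\mathcal{L}$ is convex so the theorem applies to the pair $\bigl(L, \tfrac{1}{2}(L+M)\bigr)$. Your route is more modular: it avoids redoing the summation and makes the conceptual reason transparent — $D_f$ is dual-invariant and duality is linear. The paper's direct computation is self-contained and lets the authors remark that the identity persists when $\tfrac{1}{2}(L+M)$ is replaced by any convex combination; note that your argument yields that generalization just as immediately, since linearity of the dual handles arbitrary convex weights. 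Both proofs handle the particular case identically, via $M_{\pi} = M$ for $M \in \mathcal{L}(\pi)$.
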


\begin{proof}
	\textcolor{black}{We check that
	\begin{align*}
		\overline{D}_f(L||M) &= \sum_{x \neq y} \pi(x) \frac{1}{2}(L(x,y)+M(x,y)) f\left(\dfrac{L(x,y)}{\frac{1}{2}(L(x,y)+M(x,y))}\right) \\
		&= \sum_{x \neq y} \pi(y) \frac{1}{2}(L_{\pi}(y,x)+M_{\pi}(y,x)) f\left(\dfrac{L_{\pi}(y,x)}{\frac{1}{2}(L_{\pi}(y,x)+M_{\pi}(y,x))}\right) = \overline{D}_f(L_{\pi}||M_{\pi}).
	\end{align*}
	Indeed the proof shows that this remains true when $(L + M)/2$ is replaced with any convex combination.}
\end{proof}

\subsection{$\alpha$-divergence}\label{subsec:alphadiv}

In this subsection, we investigate the $f$ and $f^*$-projections of Markov chains under the $\alpha$-divergence generated by $$f_{\alpha}(t) := \frac{t^{\alpha} - \alpha t - (1-\alpha)}{\alpha(\alpha-1)},$$
where $\alpha \in \mathbb{R}\backslash\{0,1\}$. Note that $\alpha$-divergences form an important family of $f$-divergences that arises naturally in the information geometry literature \citep{Amari16}. We shall write $\alpha^* := 1 - \alpha$, and we see that $f^*_{\alpha} = f_{\alpha^*}$. Denote by
\begin{align*}
	D_\alpha := D_{f_{\alpha}}, \quad \quad D_{\alpha^*} := D_{f_{\alpha^*}}
\end{align*}
to be respectively the $\alpha$-divergence and the divergence generated by the conjugate $f_{\alpha^*}$.

We shall inspect two important cases of $\alpha$-divergence by choosing some special values of $\alpha$. In the first special case, we choose $\alpha = 2$, and we see that
$$f_{2}(t) = (t-1)^2.$$
This divergence is known as the $\chi^2$-divergence in the literature, and we shall denote by 
\begin{align*}
	D_{\chi^2} := D_{f_{2}}
\end{align*}
to be the $\chi^2$-divergence.

In the second special case, we let $\alpha = \alpha^* = 1/2$, and so we have
$$f_{1/2}(t) = 2(\sqrt{t}-1)^2.$$
The divergence generated by $(1/2)f_{1/2}$ is known as the squared Hellinger distance which we denote by
\begin{align*}
	D_{H} := D_{\frac{1}{2}f_{1/2}}.
\end{align*}
Note that $D_H$ is symmetric in its arguments since $\alpha = \alpha^* = 1/2$ and hence for all $M,L \in \mathcal{L}$, we have $D_H(M||L) = D_H(L||M)$.

With the above notations in mind, we first present the main result of this subsection, where we identify $P_{\alpha}$ and $P_{\alpha^*}$, two power mean reversiblizations with index $\alpha$ and $\alpha^*$ respectively, to be the appropriate $f_{\alpha}$ or $f_{\alpha^*}$-projections and state the associated bisection property and parallelogram laws. The proof is deferred to Section \ref{subsubsec:alphaproof}.

\begin{theorem}[$\alpha$-divergence, $P_{\alpha}$-reversiblization and $P_{\alpha^*}$-reversiblization]\label{thm:alpha}
	Suppose that $\alpha \in \mathbb{R}\backslash\{0,1\}$, $\alpha^* = 1 - \alpha$ and $L \in \mathcal{L}$.
	\begin{enumerate}	
		\item($P_{\alpha^*}$-reversiblization as $f_{\alpha}$-projection of $D_{\alpha}$ and $f_{\alpha^*}$-projection of $D_{\alpha^*}$)\label{it:alphae} The mapping 
		$$\mathcal{L}(\pi) \ni M \mapsto D_{\alpha}(M||L) ~(\textrm{resp.}~D_{\alpha^*}(L||M))$$
		admits a unique minimizer the $f_{\alpha}$-projection of $D_{\alpha}$ (resp.~ $f_{\alpha^*}$-projection of $D_{\alpha^*}$) given by, for $x \neq y \in \mathcal{X}$,
		$$	M^{f}(x,y) = \left(\dfrac{L(x,y)^{\alpha^*} + L_{\pi}(x,y)^{\alpha^*}}{2}\right)^{1/\alpha^*} = P_{\alpha^*}(x,y),$$
		the power mean $P_{\alpha^*}$ of $L(x,y)$ and $L_{\pi}(x,y)$ with $p = \alpha^*$.
		In particular, when $L$ admits $\pi$ as its stationary distribution,
		$$	M^{f}(x,y) = \left(\dfrac{L(x,y)^{\alpha^*} + L^*(x,y)^{\alpha^*}}{2}\right)^{1/\alpha^*}.$$
		
		\item($P_{\alpha}$-reversiblization as $f_{\alpha^*}$-projection of $D_f$ and $f_{\alpha}$-projection of $D_{\alpha^*}$)\label{it:alpham} The mapping 
		$$\mathcal{L}(\pi) \ni M \mapsto D_{\alpha}(L||M) ~(\textrm{resp.}~D_{\alpha^*}(M||L))$$
		admits a unique minimizer the $f_{\alpha^*}$-projection of $D_{\alpha}$ (resp.~ $f_{\alpha}$-projection of $D_{\alpha^*}$) given by, for $x \neq y \in \mathcal{X}$,
		$$	M^{f^*}(x,y) = \left(\dfrac{L(x,y)^{\alpha} + L_{\pi}(x,y)^{\alpha}}{2}\right)^{1/\alpha} = P_{\alpha}(x,y),$$
		the power mean $P_{\alpha}$ of $L(x,y)$ and $L_{\pi}(x,y)$ with $p = \alpha$. In particular, when $L$ admits $\pi$ as its stationary distribution,
		$$	M^{f^*}(x,y) = \left(\dfrac{L(x,y)^{\alpha} + L^*(x,y)^{\alpha}}{2}\right)^{1/\alpha}.$$
		
		\item(Pythagorean identity)\label{it:alphapy} For any $\overline{M} \in \mathcal{L}(\pi)$, we have
		\begin{align}
			D_{\alpha}(L || \overline{M}) &= D_{\alpha}(L||M^{f^*}) + D_{\alpha}(M^{f^*}||\overline{M}), \label{eq:alphapy1} \\
			D_{\alpha}(\overline{M} || L) &= D_{\alpha}(\overline{M} || M^{f}) + D_{\alpha}(M^{f} || L). \label{eq:alphapy2}
		\end{align}
		
		\item(Bisection property)\label{it:alphabi} We have
		\begin{align*}
			D_{\alpha}(L||M^{f^*}) &= D_{\alpha}(L_{\pi}||M^{f^*}), \\
			D_{\alpha}(M^{f}||L) &= D_{\alpha}(M^{f}||L_{\pi}).
		\end{align*}
		In particular, when $L$ admits $\pi$ as its stationary distribution, then
		\begin{align*}
			D_{\alpha}(L||M^{f^*}) &= D_{\alpha}(L^*||M^{f^*}), \\
			D_{\alpha}(M^{f}||L) &= D_{\alpha}(M^{f}||L^*).
		\end{align*}
		
		\item(Parallelogram law)\label{it:alphapara} For any $\overline{M} \in \mathcal{L}(\pi)$, we have
		\begin{align*}
			D_{\alpha}(L || \overline{M}) + D_{\alpha}(L_{\pi} || \overline{M}) &= 2D_{\alpha}(L||M^{f^*}) + 2D_{\alpha}(M^{f^*}||\overline{M}),  \\
			D_{\alpha}(\overline{M} || L) + D_{\alpha}(\overline{M} || L_{\pi}) &= 2D_{\alpha}(\overline{M} || M^{f}) + 2D_{\alpha}(M^{f} || L). 
		\end{align*}
	\end{enumerate}
\end{theorem}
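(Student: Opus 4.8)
The plan is to exploit the fact that the objective $D_\alpha$ decouples over unordered pairs $\{x,y\}$, reducing the constrained minimization over $\mathcal{L}(\pi)$ to a family of independent one-dimensional convex problems. Indeed, for $M \in \mathcal{L}(\pi)$ the detailed balance relation $\pi(x)M(x,y) = \pi(y)M(y,x)$ couples $M(x,y)$ with $M(y,x)$ but leaves distinct pairs independent, and the diagonal entries never enter $D_\alpha$. First I would fix a pair $x \neq y$, write $m := M(x,y)$ so that $M(y,x) = (\pi(x)/\pi(y))m$, and substitute into the two terms of $D_\alpha(M||L)$ indexed by $(x,y)$ and $(y,x)$. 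Using $\pi(x)L_\pi(x,y) = \pi(y)L(y,x)$, these collapse to
$$\pi(x)\left[L(x,y)\,f_\alpha\!\left(\frac{m}{L(x,y)}\right) + L_\pi(x,y)\,f_\alpha\!\left(\frac{m}{L_\pi(x,y)}\right)\right],$$
a strictly convex function of $m$ since $f_\alpha''(t) = t^{\alpha-2} > 0$, so there is a unique minimizer determined by the first-order condition.

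For Part \ref{it:alphae}, writing $a := L(x,y)$, $b := L_\pi(x,y)$ and $f_\alpha'(t) = (t^{\alpha-1}-1)/(\alpha-1)$, the stationarity equation reads $(m/a)^{\alpha-1} + (m/b)^{\alpha-1} = 2$, whose unique positive solution is $m = \big((a^{\alpha^*}+b^{\alpha^*})/2\big)^{1/\alpha^*} = P_{\alpha^*}(x,y)$, giving $M^f = P_{\alpha^*}$. The remaining assertions require almost no new work: the identity $D_f(M||L) = D_{f^*}(L||M)$ together with $f_\alpha^* = f_{\alpha^*}$ gives $D_{\alpha^*}(L||M) = D_\alpha(M||L)$, so the respective minimizer coincides with $M^f = P_{\alpha^*}$; and Part \ref{it:alpham} follows from Part \ref{it:alphae} under $\alpha \mapsto \alpha^*$, since $D_\alpha(L||M) = D_{\alpha^*}(M||L)$ and $(\alpha^*)^* = \alpha$ yield minimizer $P_{(\alpha^*)^*} = P_\alpha$. (Alternatively one repeats the scalar computation directly for $D_\alpha(L||M)$, where the analogous condition $(a/m)^\alpha + (b/m)^\alpha = 2$ gives $m = P_\alpha(x,y)$.)

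For the Pythagorean identity in Part \ref{it:alphapy}, writing $n := M^{f^*}(x,y) = P_\alpha(x,y)$ so that $n^\alpha = \tfrac12(L(x,y)^\alpha + L_\pi(x,y)^\alpha)$, I would expand $D_\alpha(L||\overline{M}) - D_\alpha(L||M^{f^*}) - D_\alpha(M^{f^*}||\overline{M})$ using the explicit form $D_\alpha(L||M) = \frac{1}{\alpha(\alpha-1)}\sum_{x \neq y}\pi(x)\big[L(x,y)^\alpha M(x,y)^{1-\alpha} - \alpha L(x,y) - (1-\alpha)M(x,y)\big]$. The terms linear in $L$ and in $\overline{M}$ cancel, leaving per pair the expression $\overline{M}(x,y)^{1-\alpha}(L(x,y)^\alpha - n^\alpha) - L(x,y)^\alpha n^{1-\alpha} + n$. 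Two symmetrization facts then close the argument. A direct $x \leftrightarrow y$ relabeling, using reversibility of $\overline M$ and $L_\pi(y,x) = (\pi(x)/\pi(y))L(x,y)$ exactly as in the proof of Theorem \ref{thm:bisection}, shows $\sum_{x\neq y}\pi(x)\,\overline M(x,y)^{1-\alpha}L(x,y)^\alpha = \sum_{x\neq y}\pi(x)\,\overline M(x,y)^{1-\alpha}L_\pi(x,y)^\alpha$, so the contribution of $L(x,y)^\alpha - n^\alpha = \tfrac12(L(x,y)^\alpha - L_\pi(x,y)^\alpha)$ vanishes; the same relabeling applied with $n$ in place of $\overline M$, combined with $n^\alpha = \tfrac12(L^\alpha + L_\pi^\alpha)$, gives $\sum_{x\neq y}\pi(x)L(x,y)^\alpha n^{1-\alpha} = \sum_{x\neq y}\pi(x)\,n$. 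Hence the remaining sum is zero and \eqref{eq:alphapy1} holds; equation \eqref{eq:alphapy2} then follows by conjugate duality ($\alpha \mapsto \alpha^*$ with $D_\alpha(A||B) = D_{\alpha^*}(B||A)$ and $P_{\alpha^*} = M^f$). Part \ref{it:alphabi} is immediate, being exactly the two ``in particular'' identities of Theorem \ref{thm:bisection} applied to $M^{f^*}, M^f \in \mathcal{L}(\pi)$.

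Finally, for the parallelogram law in Part \ref{it:alphapara}, I would first observe that the $f^*$-projection of $L_\pi$ coincides with that of $L$, since $(L_\pi)_\pi = L$ gives $\tfrac12\big(L_\pi^\alpha + ((L_\pi)_\pi)^\alpha\big) = \tfrac12(L_\pi^\alpha + L^\alpha) = n^\alpha$. Applying \eqref{eq:alphapy1} to both $L$ and $L_\pi$, adding the two identities, and invoking the bisection identity $D_\alpha(L||M^{f^*}) = D_\alpha(L_\pi||M^{f^*})$ from Part \ref{it:alphabi} yields the first parallelogram law; the second follows once more by conjugate duality. I expect the only genuine obstacle to be the bookkeeping in the Pythagorean expansion — specifically, checking that the leftover per-pair terms reduce cleanly to the two symmetrization identities above — while every other item is either a one-line convex optimization, an invocation of Theorem \ref{thm:bisection}, or a formal consequence of the $f \leftrightarrow f^*$ duality.
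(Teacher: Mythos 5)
Your proposal is correct, but its logical architecture differs from the paper's in an interesting way. The paper proves the Pythagorean identity (item \ref{it:alphapy}) \emph{first}, by essentially the expansion you carry out---including the same key relabeling identity \eqref{eq:alphaDFLMsim}---and then deduces items \ref{it:alphae} and \ref{it:alpham} as corollaries: once $D_{\alpha}(\overline{M}||L) = D_{\alpha}(\overline{M}||M^f) + D_{\alpha}(M^f||L)$ is known, non-negativity of $D_{\alpha}$ makes $M^f$ the unique minimizer. In other words, the paper never solves the optimization problem; it only \emph{verifies} that the candidate $P_{\alpha^*}$ (resp.\ $P_{\alpha}$) satisfies an identity from which minimality is automatic. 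You instead \emph{derive} the minimizer from scratch: the pairwise decoupling of $\mathcal{L}(\pi)$ under detailed balance reduces the problem to a scalar strictly convex minimization whose first-order condition produces the power mean---which is precisely the technique the paper reserves for the centroid results (proofs of Theorems \ref{thm:existuniquecentroid} and \ref{thm:examplecentroid}, generalizing Proposition 1.5 of \cite{DM09}). Your route explains where $P_{\alpha^*}$ comes from and gets uniqueness from strict convexity; the paper's route is more economical, since items \ref{it:alphae} and \ref{it:alpham} come for free after item \ref{it:alphapy}. Your treatment of items \ref{it:alphapy}--\ref{it:alphapara} matches the paper (same expansion and symmetrization for \eqref{eq:alphapy1}, conjugate duality $f_{\alpha}^* = f_{\alpha^*}$ for \eqref{eq:alphapy2}, Theorem \ref{thm:bisection} for the bisection property, Pythagoras plus bisection for the parallelogram law), and you make explicit a detail the paper leaves implicit: that $(L_{\pi})_{\pi} = L$, so $L$ and $L_{\pi}$ share the same $f^*$-projection, which is what legitimizes applying \eqref{eq:alphapy1} to $L_{\pi}$ before summing. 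One caveat, shared equally by the paper's own proof: your first-order-condition argument (and the paper's step ``$D_{\alpha}(\overline{M}||M^f)=0$ iff $\overline{M}=M^f$'') tacitly assumes the relevant off-diagonal entries are positive; at pairs where $L(x,y)$ or $L_{\pi}(x,y)$ vanishes, the convention $0f(a/0)=0$ makes the pair objective degenerate and the uniqueness claim requires separate care, so this is not a gap relative to the paper's level of rigor.
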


\begin{rk}[On the consequence of Pythagorean identity and bisection property in practice]
	Suppose that we are given the task to sample from a given target distribution $\pi$. We have two $\pi$-stationary samplers: the first one has a generator $L$ and is non-reversible with adjoint $L^*$, while the second sampler has a $\pi$-reversible generator $\overline{M} \in \mathcal{L}(\pi)$.
	
	What is the information difference from $\overline{M}$ to $L$ with respect to the $\alpha$-divergence $D_{\alpha}$? One way to answer this question is to invoke the Pythagorean identity, which decompose the information divergence into
	\begin{align*}
		\underbrace{D_{\alpha}(L || \overline{M})}_{\text{information difference from $\overline{M}$ to $L$}} &= \underbrace{D_{\alpha}(L||M^{f^*})}_{\text{information difference from $M^{f^*}$ to $L$}} + \underbrace{D_{\alpha}(M^{f^*}||\overline{M})}_{\text{information difference from $\overline{M}$ to $M^{f^*}$}}.
	\end{align*}
	Another way to interpret this is that, within the set $\mathcal{L}(\pi)$, the unique closest $\pi$-reversible generator, measured in terms of $D_{\alpha}$, is $M^{f^*}$. Thus, if we are allowed to only simulate a $\pi$-reversible generator instead of $L$, we should simulate $M^{f^*}$ to minimize the information loss with respect to $D_{\alpha}$.
	
	Is there any information difference from $\overline{M}$ to $L$ versus from $\overline{M}$ to $L^*$? According to the bisection property, there is no difference when measured by $D_{\alpha}$ since
	$$D_{\alpha}(L||\overline{M}) = D_{\alpha}(L^*||\overline{M}).$$
\end{rk}

\begin{rk}[On generalizing the Pythagorean identity and parallelogram law to more general convex functions]
	The Pythagorean identity \eqref{it:alphapy} and parallelogram law \eqref{it:alphapara} are features of the Bregman geometry induced by the $\alpha$-divergence \citep{Amari09,Adam14} .
	The Pythagorean equality is not generally true for any $f$-divergence. We will later see in Section \ref{subsec:renyi} that it does not hold for R\'{e}nyi-divergence. We also mention that in Section \ref{subsec:approximate} we provide an approximate triangle inequality for a general three-times continuously differentiable convex $f$.
\end{rk}

\begin{rk}[On the parallelogram law]
	The parallelogram law listed in item \eqref{it:alphapara} can be interpreted graphically in an analogous manner as the Euclidean setting. We refer readers to \cite[equation $(7.57)$ to $(7.59)$ and Figure $7.10$]{N21} for an interpretation and visualization.
\end{rk}

For the special cases $\alpha = 2$ and $\alpha = 1/2$, we state two corollaries of Theorem \ref{thm:alpha}, which can serve as quick reference for the reader. We first consider the $\chi^2$-divergence where $\alpha = 2$, $\alpha^* = -1$, which gives the following Corollary:

\begin{corollary}[$\chi^2$-divergence, $P_{2}$-reversiblization and harmonic reversiblization]\label{thm:chi2}
	Suppose that $L \in \mathcal{L}$.
	\begin{enumerate}
		\item(Harmonic or $P_{-1}$-reversiblization as $f_2$-projection of $D_{\chi^2}$ and $f_{-1}$-projection of $D_{f_{-1}}$)\label{it:chi2e} The mapping 
		$$\mathcal{L}(\pi) \ni M \mapsto D_{\chi^2}(M||L) ~(\textrm{resp.}~D_{-1}(L||M))$$
		admits a unique minimizer the $f_{2}$-projection of $D_{\chi^2}$ (resp.~ $f_{-1}$-projection of $D_{-1}$) given by, for $x \neq y \in \mathcal{X}$,
		$$	M^{f}(x,y) = \left(\dfrac{L(x,y)^{-1} + L_{\pi}(x,y)^{-1}}{2}\right)^{-1} = P_{-1}(x,y),$$
		the power mean $P_{-1}$ of $L(x,y)$ and $L_{\pi}(x,y)$ with $p = -1$.
		
		\item($P_{2}$-reversiblization as $f_{-1}$-projection of $D_{\chi^2}$ and $f_2$-projection of $D_{f_{-1}}$)\label{it:chi2m} The mapping 
		$$\mathcal{L}(\pi) \ni M \mapsto D_{\chi^2}(L||M) ~(\textrm{resp.}~D_{-1}(M||L))$$
		admits a unique minimizer the $f_{-1}$-projection of $D_{\chi^2}$ (resp.~ $f_{2}$-projection of $D_{-1}$) given by, for $x \neq y \in \mathcal{X}$,
		$$	M^{f^*}(x,y) = \left(\dfrac{L(x,y)^{2} + L_{\pi}(x,y)^{2}}{2}\right)^{1/2} = P_{2}(x,y),$$
		the power mean $P_{2}$ of $L(x,y)$ and $L_{\pi}(x,y)$ with $p = 2$. 
		
		\item(Pythagorean identity)\label{it:chi2py} For any $\overline{M} \in \mathcal{L}(\pi)$, we have
		\begin{align*}
			D_{\chi^2}(L || \overline{M}) &= D_{\chi^2}(L||M^{f^*}) + D_{\chi^2}(M^{f^*}||\overline{M}),  \\
			D_{\chi^2}(\overline{M} || L) &= D_{\chi^2}(\overline{M} || M^{f}) + D_{\chi^2}(M^{f} || L). 
		\end{align*}
		
		\item(Bisection property)\label{it:chi2bi} We have
		\begin{align*}
			D_{\chi^2}(L||M^{f^*}) &= D_{\chi^2}(L_{\pi}||M^{f^*}), \\
			D_{\chi^2}(M^{f}||L) &= D_{\chi^2}(M^{f}||L_{\pi}).
		\end{align*}
		
		\item(Parallelogram law)\label{it:chi2para} For any $\overline{M} \in \mathcal{L}(\pi)$, we have
		\begin{align*}
			D_{\chi^2}(L || \overline{M}) + D_{\chi^2}(L_{\pi} || \overline{M}) &= 2D_{\chi^2}(L||M^{f^*}) + 2D_{\chi^2}(M^{f^*}||\overline{M}),  \\
			D_{\chi^2}(\overline{M} || L) + D_{\chi^2}(\overline{M} || L_{\pi}) &= 2D_{\chi^2}(\overline{M} || M^{f}) + 2D_{\chi^2}(M^{f} || L). 
		\end{align*}
	\end{enumerate}
\end{corollary}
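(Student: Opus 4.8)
The plan is to derive Corollary \ref{thm:chi2} directly from Theorem \ref{thm:alpha} by specializing to $\alpha = 2$, for which $\alpha^* = 1 - \alpha = -1$. First I would record the dictionary of substitutions: $D_{\alpha} = D_2 = D_{\chi^2}$, $D_{\alpha^*} = D_{-1}$, and the two power-mean projections collapse to $P_{\alpha^*} = P_{-1}$ (the harmonic mean) and $P_{\alpha} = P_{2}$ (the quadratic mean). Under this dictionary, items \eqref{it:alphae} and \eqref{it:alpham} of Theorem \ref{thm:alpha} become items \eqref{it:chi2e} and \eqref{it:chi2m}: the $f_2$-projection of $D_{\chi^2}$ is
$$M^{f}(x,y) = P_{-1}(x,y) = \left(\dfrac{L(x,y)^{-1} + L_{\pi}(x,y)^{-1}}{2}\right)^{-1},$$
and the $f_{-1}$-projection of $D_{\chi^2}$ is
$$M^{f^*}(x,y) = P_{2}(x,y) = \left(\dfrac{L(x,y)^{2} + L_{\pi}(x,y)^{2}}{2}\right)^{1/2}.$$
The remaining items \eqref{it:chi2py}, \eqref{it:chi2bi} and \eqref{it:chi2para} are then the verbatim images of the Pythagorean identity \eqref{it:alphapy}, bisection property \eqref{it:alphabi} and parallelogram law \eqref{it:alphapara} under the same substitution $\alpha \mapsto 2$, $\alpha^* \mapsto -1$.

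The one point requiring care, and the only place a genuine (if mild) obstacle arises, is the normalization of the generating function. Evaluating the general $\alpha$-divergence formula at $\alpha = 2$ yields $f_2(t) = (t-1)^2/2$, whereas the $\chi^2$-divergence is customarily generated by $(t-1)^2$, the two differing by a positive multiplicative constant. I would argue that this is immaterial for every assertion of the corollary. The projections in \eqref{def:emprojection} are defined as minimizers, hence are invariant under multiplying the generating function by a positive scalar, so $M^{f}$ and $M^{f^*}$ are unchanged; and each of the three geometric identities is homogeneous of degree one in the divergence, so a common positive factor cancels simultaneously on both sides. Consequently the conclusions hold under either reading of $f_2$, and in particular the corollary is a faithful restriction of Theorem \ref{thm:alpha}.

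Beyond this bookkeeping I do not anticipate any difficulty. All of the analytic content — existence and uniqueness of the minimizers, the $\pi$-reversibility of the power means (verified immediately after \eqref{def:Pp}), and the three identities themselves — has already been established at the level of arbitrary $\alpha \in \mathbb{R}\backslash\{0,1\}$ in Theorem \ref{thm:alpha}. The proof therefore reduces entirely to substituting $\alpha = 2$, reading off $P_{-1}$ and $P_{2}$ as the resulting projections, and invoking the scale-invariance remark above to absorb the normalization of $f_2$.
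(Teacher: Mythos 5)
Your proposal is correct and matches the paper's own treatment: the corollary is presented there precisely as the specialization of Theorem \ref{thm:alpha} to $\alpha = 2$, $\alpha^* = -1$, with no separate proof given. Your additional observation about the normalization (the general formula yields $f_2(t) = (t-1)^2/2$, while the paper writes $f_2(t) = (t-1)^2$) flags a detail the paper glosses over, and your scale-invariance argument — minimizers are unchanged under positive rescaling of $f$, and each identity is homogeneous of degree one in the divergence — correctly shows it is immaterial.
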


\begin{rk}
	We remark that the harmonic or $P_{-1}$-reversiblization is in fact the Barker proposal in the Markov chain Monte Carlo literature \cite{Z20,LZ22,VLZ22}. 
\end{rk}

As the second special case of Theorem \ref{thm:alpha}, we consider the squared Hellinger distance $D_H$ with $\alpha = \alpha^* = 1/2$ to arrive at the following Corollary:

\begin{corollary}[Squared Hellinger distance and $P_{1/2}$-reversiblization]\label{thm:hellinger}
	Suppose that $L \in \mathcal{L}$.
	\begin{enumerate}
		\item($P_{1/2}$-reversiblization as $f_{1/2}$-projection)\label{it:hellingere} The mapping 
		$$\mathcal{L}(\pi) \ni M \mapsto D_H(M||L)$$
		admits a unique minimizer the $f_{1/2}$-projection given by, for $x \neq y \in \mathcal{X}$,
		$$	M^f(x,y) = \left(\dfrac{\sqrt{L(x,y)} + \sqrt{L_{\pi}(x,y)}}{2}\right)^2 = P_{1/2}(x,y),$$
		the power mean $P_{1/2}$ of $L(x,y)$ and $L_{\pi}(x,y)$ with $p = 1/2$. 
		
		\item($P_{1/2}$-reversiblization as $f_{1/2}$-projection)\label{it:hellingerm} The mapping 
		$$\mathcal{L}(\pi) \ni M \mapsto D_H(L||M)$$
		admits a unique minimizer the $f_{1/2}$-projection given by, for $x \neq y \in \mathcal{X}$,
		$$	M^{f^*}(x,y) = \left(\dfrac{\sqrt{L(x,y)} + \sqrt{L_{\pi}(x,y)}}{2}\right)^2 = P_{1/2}(x,y).$$
		
		\item(Pythagorean identity)\label{it:hellingerpy} For any $\overline{M} \in \mathcal{L}(\pi)$, we have
		\begin{align*}
			D_H(L || \overline{M}) &= D_H(L||M^{f^*}) + D_H(M^{f^*}||\overline{M}),  \\
			D_H(\overline{M} || L) &= D_H(\overline{M} || M^{f}) + D_H(M^{f} || L). 
		\end{align*}
		
		\item(Bisection property)\label{it:hellingerbi} 
		\begin{align*}
			D_H(L||M^{f^*}) &= D_H(L_{\pi}||M^{f^*}), \\
			D_H(M^{f}||L) &= D_H(M^{f}||L_{\pi}).
		\end{align*}
		
		\item(Parallelogram law)\label{it:hellingerpara} For any $\overline{M} \in \mathcal{L}(\pi)$, we have
		\begin{align*}
			D_H(L || \overline{M}) + D_H(L_{\pi} || \overline{M}) &= 2D_H(L||M^{f^*}) + 2D_H(M^{f^*}||\overline{M}),  \\
			D_H(\overline{M} || L) + D_H(\overline{M} || L_{\pi}) &= 2D_H(\overline{M} || M^{f}) + 2D_H(M^{f} || L). 
		\end{align*}
	\end{enumerate}
\end{corollary}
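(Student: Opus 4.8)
The plan is to obtain this Corollary as a direct specialization of Theorem \ref{thm:alpha} to the self-conjugate value $\alpha = \alpha^* = 1/2$, combined with the observation that $D_H$ differs from $D_{1/2}$ only by a positive multiplicative constant. Recalling that $f_{1/2}(t) = 2(\sqrt{t}-1)^2$ and $D_H = D_{\frac{1}{2}f_{1/2}}$, the linearity of the defining sum \eqref{def:fdivML} in $f$ gives $D_H = \tfrac{1}{2} D_{f_{1/2}} = \tfrac{1}{2} D_{1/2}$ on every pair of generators. Since multiplying an objective by the positive constant $1/2$ leaves its argmin unchanged, the two minimization problems $\min_{M \in \mathcal{L}(\pi)} D_H(M||L)$ and $\min_{M \in \mathcal{L}(\pi)} D_H(L||M)$ have exactly the same solutions (and the same uniqueness) as the corresponding problems for $D_{1/2} = D_{\alpha}$ with $\alpha = 1/2$.

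First I would invoke Theorem \ref{thm:alpha}\eqref{it:alphae} at $\alpha = 1/2$: the minimizer of $D_{\alpha}(M||L)$ over $\mathcal{L}(\pi)$ is $P_{\alpha^*} = P_{1/2}$. Next I would invoke Theorem \ref{thm:alpha}\eqref{it:alpham} at $\alpha = 1/2$: the minimizer of $D_{\alpha}(L||M)$ is $P_{\alpha} = P_{1/2}$. Because $\alpha = \alpha^* = 1/2$, these two power means coincide, so both projections equal the single reversiblization $P_{1/2}(x,y) = \left(\frac{\sqrt{L(x,y)} + \sqrt{L_{\pi}(x,y)}}{2}\right)^2$, establishing items \eqref{it:hellingere} and \eqref{it:hellingerm}. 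This collapse is consistent with the symmetry $D_H(M||L) = D_H(L||M)$ already recorded, which independently forces the two one-sided projections to agree.

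For the remaining three items I would transcribe the Pythagorean identity \eqref{it:alphapy}, the bisection property \eqref{it:alphabi}, and the parallelogram law \eqref{it:alphapara} of Theorem \ref{thm:alpha} at $\alpha = 1/2$, and then multiply each displayed equality through by $1/2$. Each of these statements is an equality among $D_{\alpha}$-terms that is homogeneous of degree one in the divergence, so scaling every term by the common factor $1/2$ converts every $D_{1/2}$ into $D_H$ while preserving the equality; this yields items \eqref{it:hellingerpy}, \eqref{it:hellingerbi} and \eqref{it:hellingerpara} verbatim with $D_{\alpha}$ replaced by $D_H$.

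Since the whole statement is a corollary, I do not anticipate a genuine obstacle. The only points requiring care are bookkeeping ones: confirming that $D_H = \tfrac{1}{2}D_{1/2}$ rather than $D_{1/2}$ itself, checking that the constant factor is irrelevant both for the argmin (items \eqref{it:hellingere}, \eqref{it:hellingerm}) and for the degree-one identities (items \eqref{it:hellingerpy}, \eqref{it:hellingerbi}, \eqref{it:hellingerpara}), and noting that the edge conventions $0\,f(a/0)=0$ are inherited unchanged under the scaling.
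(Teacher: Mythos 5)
Your proposal is correct and follows exactly the paper's route: the paper presents this corollary as a direct specialization of Theorem \ref{thm:alpha} at the self-conjugate value $\alpha = \alpha^* = 1/2$, with no separate proof given. Your bookkeeping details --- that $D_H = \tfrac{1}{2}D_{1/2}$ by linearity of \eqref{def:fdivML} in $f$, that this positive constant leaves the argmin (and its uniqueness) unchanged, and that the Pythagorean, bisection and parallelogram identities are degree-one in the divergence so they scale through --- are precisely the implicit content of that specialization.
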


\subsubsection{Proof of Theorem \ref{thm:alpha}}\label{subsubsec:alphaproof}
    We first observe that if item \eqref{it:alphapy} holds, then items \eqref{it:alphae} and \eqref{it:alpham} follow. To see that, by the Pythagorean identity and the fact that $D_{\alpha} \geq 0$, we have
	\begin{align}
		D_{\alpha}(\overline{M} || L) &= D_{\alpha}(\overline{M} || M^{f}) + D_{\alpha}(M^{f} || L) \\
		&\geq D_{\alpha}(M^{f} || L).
	\end{align}
	The above equality holds if and only if $D_{\alpha}(\overline{M} || M^{f}) = 0$ if and only if $M^f = \overline{M}$ which gives the uniqueness. Similarly, using the Pythagorean identity again we have
\begin{align}
	D_{\alpha}(L || \overline{M}) &= D_{\alpha}(L||M^{f^*}) + D_{\alpha}(M^{f^*}||\overline{M}) \\
								  &\geq D_{\alpha}(L||M^{f^*}).
\end{align}
	The equality holds if and only if $D_{\alpha}(M^{f^*}||\overline{M}) = 0$ if and only if $M^{f^*} = \overline{M}$ which gives the uniqueness.
	We proceed to prove item \eqref{it:alphapy}. To prove \eqref{eq:alphapy1}, we first calculate that
	\begin{align}
		D_f(L||\overline{M}) &= \sum_{x \neq y} \pi(x) \overline{M}(x,y) \left(\dfrac{\left(\frac{L(x,y)}{\overline{M}(x,y)}\right)^{\alpha} - \alpha \frac{L(x,y)}{\overline{M}(x,y)} - (1-\alpha)}{\alpha(\alpha-1)}\right) \nonumber \\
		&= \sum_{x \neq y} \pi(x) \overline{M}(x,y) \left(\dfrac{\left(\frac{M^{f^*}(x,y)}{\overline{M}(x,y)}\right)^{\alpha} - \alpha \frac{M^{f^*}(x,y)}{\overline{M}(x,y)} - (1-\alpha)}{\alpha(\alpha-1)}\right) \nonumber \\
		&\quad + \sum_{x \neq y} \pi(x) \overline{M}(x,y) \left(\dfrac{\left(\frac{L(x,y)^{\alpha} - (M^{f^*}(x,y))^{\alpha}}{\overline{M}(x,y)^{\alpha}}\right) - \alpha \frac{L(x,y) - M^{f^*}(x,y)}{\overline{M}(x,y)}}{\alpha(\alpha-1)}\right)\nonumber \\
		&= D_f(M^{f^*} || \overline{M}) + \sum_{x \neq y} \pi(x) \overline{M}(x,y) \left(\dfrac{\left(\frac{L(x,y)^{\alpha} - (M^{f^*}(x,y))^{\alpha}}{\overline{M}(x,y)^{\alpha}}\right) - \alpha \frac{L(x,y) - M^{f^*}(x,y)}{\overline{M}(x,y)}}{\alpha(\alpha-1)}\right). \label{eq:alphaDFLM}
	\end{align}
	Using the expression of $M^{f^*}$ we note that
	\begin{align}
		\sum_{x \neq y}\pi(x) \dfrac{L(x,y)^{\alpha} - M^{f^*}(x,y)^{\alpha}}{\overline{M}(x,y)^{\alpha-1}} &= \sum_{x \neq y}\pi(x) \dfrac{L(x,y)^{\alpha} - L_{\pi}(x,y)^{\alpha}}{2\overline{M}(x,y)^{\alpha-1}} \nonumber\\
		&=  \sum_{x \neq y}\pi(x) \dfrac{L(x,y)^{\alpha}}{2\overline{M}(x,y)^{\alpha-1}} - \sum_{x \neq y}\pi(y) \dfrac{L(y,x)^{\alpha}}{2\overline{M}(y,x)^{\alpha-1}} = 0. \label{eq:alphaDFLMsim}
	\end{align}
	Substituting \eqref{eq:alphaDFLMsim} into \eqref{eq:alphaDFLM} gives rise to
	\begin{align*}
		D_f(L||\overline{M}) 
		&= D_f(M^{f^*} || \overline{M}) + \sum_{x \neq y}\pi(x) \dfrac{(-L(x,y) + M^{f^*}(x,y))}{\alpha -1},
	\end{align*}
	and it suffices to prove the second term of the right hand side above equals to $D_f(L||M^{f^*})$, which is true since
	\begin{align*}
		D_f(L||M^{f^*}) &=  \sum_{x \neq y} \pi(x) M^{f^*}(x,y) \left(\dfrac{\left(\frac{L(x,y)}{M^{f^*}(x,y)}\right)^{\alpha} - \alpha \frac{L(x,y)}{M^{f^*}(x,y)} - (1-\alpha)}{\alpha(\alpha-1)}\right) \\
		&= \sum_{x \neq y} \pi(x) M^{f^*}(x,y) \left(\dfrac{\left(\frac{L(x,y)}{M^{f^*}(x,y)}\right)^{\alpha} - 1}{\alpha(\alpha-1)}\right) + \sum_{x \neq y}\pi(x) \dfrac{(-L(x,y) + M^{f^*}(x,y))}{\alpha -1} \\
		&= \sum_{x \neq y} \pi(x) \left(M^{f^*}(x,y)\right)^{1-\alpha} \dfrac{L(x,y)^{\alpha}}{\alpha(\alpha-1)} - \sum_{x \neq y} \pi(x) M^{f^*}(x,y) \dfrac{1}{\alpha(\alpha-1)} \\
		&\quad + \sum_{x \neq y}\pi(x) \dfrac{(-L(x,y) + M^{f^*}(x,y))}{\alpha -1} \\
		&= \sum_{x \neq y}\pi(x) \dfrac{(-L(x,y) + M^{f^*}(x,y))}{\alpha -1},
	\end{align*}
	which in the last equality we use the same argument as in \eqref{eq:alphaDFLMsim} and the definition of $M^{f^*}$. 
	
	We proceed to prove \eqref{eq:alphapy2}, which follows from \eqref{eq:alphapy1}. To see this, we calculate that
	\begin{align*}
		D_{\alpha}(\overline{M} || L) &= D_{\alpha^*}(L || \overline{M}) \\
		&= D_{\alpha^*}(L||M^{f}) + D_{\alpha^*}(M^{f}||\overline{M}) \\
		&= D_{\alpha}(\overline{M} || M^{f}) + D_{\alpha}(M^{f} || L),
	\end{align*}
	where the second equality follows from \eqref{eq:alphapy1} and $f_{\alpha}^* = f_{\alpha^*}$.

	For item \eqref{it:chi2bi}, it follows directly from the bisection property in Theorem \ref{thm:bisection} where we note that $M^f, M^{f^*} \in \mathcal{L}(\pi)$. Finally, for item \eqref{it:alphapara}, we utilize both the Pythagorean identity and bisection property to reach the desired result.

\subsection{Jensen-Shannon divergence and Vincze-Le	Cam divergence}\label{subsec:JSdLCd}

In this subsection and the next, our goal is to unravel relationships or inequalities between various $f$-divergences or statistical divergences. In particular, we shall illustrate this approach by looking into the Jensen-Shannon divergence and Vincze-Le Cam divergence, which are two symmetric divergences.

Recalling the expression of $\overline{D}_f$ \eqref{def:Dbar}, we proceed to define the two above-mentioned divergences.
\begin{definition}[Jensen-Shannon divergence \cite{Lin1991,SV16}]\label{def:JSdiv}
	Given $L, M \in \mathcal{L}$ and taking $f(t) = t \ln t - t + 1$ and $h(t) = t \ln t - (1+t)\ln((1+t)/2)$, the Jensen-Shannon divergence is defined to be
	$$JS(L||M) := \overline{D}_f(L||M) + \overline{D}_f(M||L) = D_h(L||M),$$
	where $D_{KL} := D_f$ is the classical Kullback-Leibler divergence between $M$ and $L$. Note that $JS(L||M) = JS(M||L)$.
\end{definition}
\begin{definition}[Vincze-Le Cam divergence \cite{V81,SV16,LC86}]\label{def:VLCdiv}
	Given $L, M \in \mathcal{L}$ and taking $f(t) = (t-1)^2$ and $h(t) = \frac{(t-1)^2}{1+t}$, the Vincze-Le Cam divergence is defined to be
	$$\Delta(L||M) := 2\overline{D}_f(L||M) = 2 \overline{D}_f(M||L) = D_h(L||M),$$
	where $D_f = D_{\chi^2}$ is the $\chi^2$-divergence between $M$ and $L$. Note that $\Delta(L||M) = \Delta(M||L)$.
\end{definition}

While both $JS$ and $\Delta$ can be regarded as a $h$-divergence for an appropriately, strictly convex $h$, we cannot express their projections $M^h = M^{h^*}$ with our previous approach or the one in \cite{DM09}. Using the convexity of $D_f(L||\cdot)$, we can obtain inequalities between these divergences:

\begin{theorem}[Bounding Jensen-Shannon by Kullback-Leibler]\label{thm:JSKL}
	Given $L, M \in \mathcal{L}$, $\overline{M} \in \mathcal{L}(\pi)$, and taking $f(t) = t \ln t - t + 1$ and $h(t) = t \ln t - (1+t)\ln((1+t)/2)$,
	we have
	\begin{align}\label{eq:JSKL1}
		JS(L||M) \leq \dfrac{1}{2} (D_f(L||M) + D_f(M||L)).
	\end{align}
	\textcolor{black}{Recall that $M^{h^*} = \argmin_{M \in \mathcal{L}(\pi)} D_h(L||M) = \argmin_{M \in \mathcal{L}(\pi)} D_h(M||L) = M^h$} is the unique $h^*$-projection or $h$-projection of $JS = D_h$, then
	\begin{align}\label{eq:JSKL2}
		JS(L||M^{h^*}) \leq \dfrac{1}{2} (D_f(L||\overline{M}) + D_f(\overline{M}||L)).
	\end{align}
	We also have the following bisection property for $JS$:
	$$JS(L||\overline{M}) = JS(L_{\pi}||\overline{M}).$$
\end{theorem}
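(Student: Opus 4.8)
The plan is to prove the three assertions in sequence, with the first inequality \eqref{eq:JSKL1} doing the essential work and then feeding directly into \eqref{eq:JSKL2}, while the bisection property is handled separately via Theorem \ref{thm:bisectionDbar}.

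First I would reduce \eqref{eq:JSKL1} to the single-argument bound $\overline{D}_f(L||M) \le \tfrac12 D_f(L||M)$, valid for every $L, M \in \mathcal{L}$. Recalling from \eqref{def:Dbar} that $\overline{D}_f(L||M) = D_f(L \,||\, \tfrac12(L+M))$, the key observation is that $M \mapsto D_f(L||M)$ is convex: each summand $M(x,y)\,f\!\left(L(x,y)/M(x,y)\right)$ is the perspective of the convex function $f$ and is therefore convex in the entry $M(x,y)$, and the sum against the nonnegative weights $\pi(x)$ is a separable sum of convex functions, hence convex in $M$. Evaluating at the midpoint $\tfrac12 L + \tfrac12 M$ and using $D_f(L||L) = 0$ (which holds since $f(1)=0$) then yields
\begin{align*}
\overline{D}_f(L||M) = D_f\!\left(L \,\Big\|\, \tfrac12 L + \tfrac12 M\right) \le \tfrac12 D_f(L||L) + \tfrac12 D_f(L||M) = \tfrac12 D_f(L||M).
\end{align*}
Summing this with its image under swapping $L$ and $M$, and invoking $JS(L||M) = \overline{D}_f(L||M) + \overline{D}_f(M||L)$ from Definition \ref{def:JSdiv}, produces \eqref{eq:JSKL1}.

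For \eqref{eq:JSKL2} I would simply combine \eqref{eq:JSKL1} with the optimality built into the projection. Since $JS = D_h$ and $M^{h^*} = \argmin_{M \in \mathcal{L}(\pi)} D_h(L||M)$, every competitor $\overline{M} \in \mathcal{L}(\pi)$ satisfies $JS(L||M^{h^*}) \le JS(L||\overline{M})$; applying \eqref{eq:JSKL1} to the pair $(L,\overline{M})$ bounds the right-hand side by $\tfrac12\big(D_f(L||\overline{M}) + D_f(\overline{M}||L)\big)$, and chaining the two gives the claim.

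For the bisection property I would split $JS(L||\overline{M}) = \overline{D}_f(L||\overline{M}) + \overline{D}_f(\overline{M}||L)$ and treat each term with Theorem \ref{thm:bisectionDbar}. The first term yields $\overline{D}_f(L||\overline{M}) = \overline{D}_f(L_\pi||\overline{M})$ from the special case of that theorem, available because $\overline{M} \in \mathcal{L}(\pi)$. For the second term I would use the general identity $\overline{D}_f(\overline{M}||L) = \overline{D}_f(\overline{M}_\pi||L_\pi)$ and note that a $\pi$-reversible generator satisfies $\overline{M}_\pi = \overline{M}$ by detailed balance, so this equals $\overline{D}_f(\overline{M}||L_\pi)$; recombining gives $JS(L_\pi||\overline{M})$. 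There is no deep obstacle here: the only points needing care are the justification that $M \mapsto D_f(L||M)$ is convex as a perspective function, and the bookkeeping in the bisection step, where one must route the term whose reversible argument sits in the first slot through the \emph{general} form of Theorem \ref{thm:bisectionDbar} rather than its special case.
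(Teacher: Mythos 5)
Your proposal is correct and follows essentially the same route as the paper: \eqref{eq:JSKL1} via convexity of $D_f(L||\cdot)$ together with $D_f(L||L)=0$ applied at the midpoint, \eqref{eq:JSKL2} by chaining the projection optimality $JS(L||M^{h^*}) \leq JS(L||\overline{M})$ with \eqref{eq:JSKL1}, and the bisection property by applying Theorem \ref{thm:bisectionDbar} twice (one of the two routes the paper indicates, using $\overline{M}_\pi = \overline{M}$ for the term with the reversible argument in front). Your write-up merely makes explicit what the paper leaves terse, namely the perspective-function justification of convexity and the bookkeeping in the bisection step.
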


\begin{proof}
	To prove \eqref{eq:JSKL1}, we note that by the convexity of $D_f$ and the property that $D_f(L||L) = D_f(M||M) = 0$,
	\begin{align*}
		JS(L||M) \leq \dfrac{1}{2} (D_f(L||M) + D_f(M||L)).
	\end{align*}
	As for \eqref{eq:JSKL2}, it follows from definition that
	$$JS(L||\textcolor{black}{M^h}) \leq JS(L||\overline{M}) \leq \dfrac{1}{2} (D_f(L||\overline{M}) + D_f(\overline{M}||L)).$$
	Finally, for the bisection property, we either apply the bisection property twice for $\overline{D}_f$ (Theorem \ref{thm:bisectionDbar}) or by the bisection property once for $D_h$.
\end{proof}

The analogous theorem of $\Delta$ is now stated, and its proof is omitted since it is very similar as that of Theorem \ref{thm:JSKL}:

\begin{theorem}[Bounding Vincze-Le Cam by $\chi^2$]
	Given $L, M \in \mathcal{L}$, $\overline{M} \in \mathcal{L}(\pi)$, and taking $f(t) = (t-1)^2$ and $h(t) = \frac{(t-1)^2}{1+t}$,
	we have
	\begin{align}
		\Delta(L||M) \leq D_{\chi^2}(L||M).
	\end{align}
	\textcolor{black}{Recall that $M^{h^*} = \argmin_{M \in \mathcal{L}(\pi)} D_h(L||M) = \argmin_{M \in \mathcal{L}(\pi)} D_h(M||L) = M^h$} is the unique $h^*$-projection or $h$-projection of $\Delta = D_h$, then
	\begin{align}
		\Delta(L||M^{h^*}) \leq D_{\chi^2}(L||\overline{M}).
	\end{align}
	We also have the following bisection property for $\Delta$:
	$$\Delta(L||\overline{M}) = \Delta(L_{\pi}||\overline{M}).$$
\end{theorem}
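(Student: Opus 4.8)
The plan is to follow the three-part structure of the proof of Theorem~\ref{thm:JSKL} essentially verbatim, simply replacing the Kullback--Leibler generator by the $\chi^2$ generator $f(t) = (t-1)^2$ and the Jensen--Shannon $h$ by the Vincze--Le Cam $h(t) = (t-1)^2/(1+t)$, and using the identity $\Delta = 2\overline{D}_f = D_h$ supplied by Definition~\ref{def:VLCdiv} in place of the two-term representation $JS = \overline{D}_f(L\|M) + \overline{D}_f(M\|L)$ that was available in the Jensen--Shannon case. The simplification is that $\Delta$ is a single $\overline{D}_f$ term (up to the factor $2$) rather than a sum of two, so each of the three steps shrinks slightly.

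First I would prove the pointwise bound $\Delta(L\|M) \le D_{\chi^2}(L\|M)$. The only ingredient needed is the joint convexity of the map $(A,B) \mapsto D_f(A\|B)$: each summand $(A(x,y),B(x,y)) \mapsto B(x,y)\, f\!\left(A(x,y)/B(x,y)\right)$ is the perspective of the convex function $f$ and hence jointly convex, and $D_f$ is a $\pi$-weighted nonnegative combination of such terms. Applying joint convexity at the midpoint, and using $D_f(L\|L) = 0$ (which holds because $f(1)=0$), gives $\overline{D}_f(L\|M) = D_f\!\left(L \,\big\|\, \tfrac12(L+M)\right) \le \tfrac12 D_f(L\|L) + \tfrac12 D_f(L\|M) = \tfrac12 D_{\chi^2}(L\|M)$. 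Since $\Delta(L\|M) = 2\overline{D}_f(L\|M)$, multiplying through by $2$ yields the first inequality.

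Next, for the projected inequality I would combine the variational characterization of the projection with the bound just established. Because $M^{h^*} = \argmin_{M \in \mathcal{L}(\pi)} D_h(L\|M) = M^h$ and $\Delta = D_h$, every $\overline{M} \in \mathcal{L}(\pi)$ satisfies $\Delta(L\|M^{h^*}) \le \Delta(L\|\overline{M})$ by minimality; applying the first part with $M = \overline{M}$ then gives $\Delta(L\|\overline{M}) \le D_{\chi^2}(L\|\overline{M})$, and chaining the two bounds proves $\Delta(L\|M^{h^*}) \le D_{\chi^2}(L\|\overline{M})$. Finally, the bisection property $\Delta(L\|\overline{M}) = \Delta(L_{\pi}\|\overline{M})$ follows from Theorem~\ref{thm:bisectionDbar} applied once to $\overline{D}_f$: since $\overline{M} \in \mathcal{L}(\pi)$ implies $\overline{M}_{\pi} = \overline{M}$, we get $\overline{D}_f(L\|\overline{M}) = \overline{D}_f(L_{\pi}\|\overline{M}_{\pi}) = \overline{D}_f(L_{\pi}\|\overline{M})$, and multiplying by $2$ gives the claim; alternatively one invokes the bisection property of $D_h$ directly via Theorem~\ref{thm:bisection}.

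I expect no serious obstacle. The only points deserving care are the perspective-function argument that legitimizes the midpoint convexity step, and the fact that $M^h = M^{h^*}$ exists and is unique even though, as flagged in the remark following Theorem~\ref{thm:alpha}, these $h$-divergences do \emph{not} admit a closed-form power-mean projection; the proof deliberately sidesteps this difficulty by using only the variational (minimality) property of $M^h$ rather than any explicit formula, exactly as in Theorem~\ref{thm:JSKL}.
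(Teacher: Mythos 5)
Your proposal is correct and follows essentially the same route as the paper, which omits this proof precisely because it mirrors that of Theorem~\ref{thm:JSKL}: convexity of $D_f$ in its second argument together with $D_f(L||L)=0$ for the first inequality, minimality of the projection chained with the first part for the second, and Theorem~\ref{thm:bisectionDbar} (or the bisection property of $D_h$ via Theorem~\ref{thm:bisection}) for the third. Your observations that $\Delta=2\overline{D}_f$ requires only one application of the bisection property and that only the variational property of $M^h$ is needed are exactly the intended simplifications.
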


\subsection{R\'{e}nyi-divergence}\label{subsec:renyi}

The objective of this subsection is to investigate the projections of non-reversible Markov chains with respect to other notions of statistical divergence apart from $f$-divergence. Building upon relationships between various $f$-divergences or other statistical divergences, one can possibly construct and develop new inequalities governing the information divergences between these objects. In this subsection, we shall in particular examine the R\'{e}nyi-divergence which can be defined as a log-transformed version of the $\alpha$-divergence as introduced in Section \ref{subsec:alphadiv}.

Precisely, for $\alpha > 1$, we define the R\'{e}nyi-divergence between $M,L \in \mathcal{L}$ to be
\begin{align}
	R_{\alpha}(M||L) := \dfrac{1}{\alpha-1} \ln\left(1 + \alpha(\alpha-1)D_{\alpha}(M||L)\right).
\end{align}
where we recall that $D_\alpha$ is the $\alpha$-divergence as introduced in Section \ref{subsec:alphadiv}. Since $D_f \geq 0$ and $\alpha > 1$, we note that $R_{\alpha} \geq 0$. Interestingly, we shall see that $R_{\alpha}$ inherits both the minimization property and bisection property from that of $D_\alpha$ due to the \textcolor{black}{increasing} transformation between $R_{\alpha}$ and $D_\alpha$, while owing to the concavity ($\alpha > 1$) of the transformation, the equalities in the Pythagorean identity and parallelogram law become inequalities.

\begin{theorem}[R\'{e}nyi-divergence, $P_{\alpha}$-reversiblization and $P_{1-\alpha}$-reversiblization]\label{thm:renyi}
	Let $\alpha > 1$, $\alpha^* = 1-\alpha$ and $f(t) = \frac{t^{\alpha} - \alpha t - (1-\alpha)}{\alpha(\alpha-1)}$. Suppose that $L \in \mathcal{L}$.
	\begin{enumerate}
		\item($P_{\alpha^*}$-reversiblization)\label{it:renyie} The mapping 
		$$\mathcal{L}(\pi) \ni M \mapsto R_{\alpha}(M||L)$$
		admits a unique minimizer the power mean $P_{\alpha^*}$ of $L(x,y)$ and $L_{\pi}(x,y)$ with $p = \alpha^*$. given by, for $x \neq y \in \mathcal{X}$,
		$$	 M^f(x,y) = P_{\alpha^*}(x,y) = \left(\dfrac{L(x,y)^{\alpha^*} + L_{\pi}(x,y)^{\alpha^*}}{2}\right)^{1/\alpha^*},$$
		
		\item($P_{\alpha}$-reversiblization)\label{it:renyim} The mapping 
		$$\mathcal{L}(\pi) \ni M \mapsto R_{\alpha}(L||M)$$
		admits a unique minimizer the power mean $P_{\alpha}$ of $L(x,y)$ and $L_{\pi}(x,y)$ with $p = \alpha$. given by, for $x \neq y \in \mathcal{X}$,
		$$	 M^{f^*}(x,y) = P_{\alpha}(x,y) = \left(\dfrac{L(x,y)^{\alpha} + L_{\pi}(x,y)^{\alpha}}{2}\right)^{1/\alpha},$$
		
%
		\item(Pythagorean inequality)\label{it:renyipy} For any $\overline{M} \in \mathcal{L}(\pi)$, we have
		\begin{align}
			R_{\alpha}(L || \overline{M}) &\leq R_{\alpha}(L||M^{f^*}) + R_{\alpha}(M^{f^*}||\overline{M}), \label{eq:renyipy1} \\
			R_{\alpha}(\overline{M} || L) &\leq R_{\alpha}(\overline{M} || M^{f}) + R_{\alpha}(M^{f} || L), \label{eq:renyipy2}
		\end{align}
		\item(Bisection property)\label{it:renyibi} We have
		\begin{align*}
			R_{\alpha}(L||M^{f^*}) &= R_{\alpha}(L_{\pi}||M^{f^*}), \\
			R_{\alpha}(M^{f}||L) &= R_{\alpha}(M^{f}||L_{\pi}).
		\end{align*}
		
		\item(Parallelogram inequality)\label{it:renyipara} For any $\overline{M} \in \mathcal{L}(\pi)$, we have
		\begin{align*}
			R_{\alpha}(L || \overline{M}) + R_{\alpha}(L_{\pi} || \overline{M}) &\leq 2R_{\alpha}(L||M^{f^*}) + 2R_{\alpha}(M^{f^*}||\overline{M}),  \\
			R_{\alpha}(\overline{M} || L) + R_{\alpha}(\overline{M} || L_{\pi}) &\leq 2R_{\alpha}(\overline{M} || M^{f}) + 2R_{\alpha}(M^{f} || L),
		\end{align*}
	\end{enumerate}
\end{theorem}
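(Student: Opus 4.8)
The plan is to exploit the structural fact, already flagged before the statement, that for $\alpha > 1$ the R\'enyi-divergence is an increasing and concave reparametrization of the $\alpha$-divergence. Writing $\phi(u) := \frac{1}{\alpha-1}\ln\left(1 + \alpha(\alpha-1)u\right)$, we have $R_{\alpha}(\cdot\,||\,\cdot) = \phi\!\left(D_{\alpha}(\cdot\,||\,\cdot)\right)$. A direct differentiation gives $\phi'(u) = \alpha/(1+\alpha(\alpha-1)u) > 0$ and $\phi''(u) = -\alpha^2(\alpha-1)/(1+\alpha(\alpha-1)u)^2 < 0$ on $u \ge 0$, so $\phi$ is strictly increasing and strictly concave, with $\phi(0) = 0$ (and well defined on the relevant range since $D_{\alpha} \ge 0$ and $\alpha > 1$). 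Every assertion will then be read off from its $D_{\alpha}$-counterpart in Theorem~\ref{thm:alpha}: monotonicity of $\phi$ preserves the equalities, while concavity degrades the identities into inequalities.

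For items~\eqref{it:renyie} and~\eqref{it:renyim}, since $\phi$ is strictly increasing the maps $M \mapsto R_{\alpha}(M||L) = \phi(D_{\alpha}(M||L))$ and $M \mapsto R_{\alpha}(L||M) = \phi(D_{\alpha}(L||M))$ share their unique minimizers over $\mathcal{L}(\pi)$ with $D_{\alpha}(M||L)$ and $D_{\alpha}(L||M)$; by items~\eqref{it:alphae} and~\eqref{it:alpham} of Theorem~\ref{thm:alpha} these are exactly $M^{f} = P_{\alpha^*}$ and $M^{f^*} = P_{\alpha}$. The bisection property~\eqref{it:renyibi} is immediate from the same monotone reparametrization together with~\eqref{it:alphabi}: for instance $R_{\alpha}(L||M^{f^*}) = \phi(D_{\alpha}(L||M^{f^*})) = \phi(D_{\alpha}(L_{\pi}||M^{f^*})) = R_{\alpha}(L_{\pi}||M^{f^*})$, and symmetrically for the second identity.

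The two inequalities rest on the elementary fact that a concave $\phi$ with $\phi(0)=0$ is subadditive, $\phi(s+t) \le \phi(s)+\phi(t)$ for $s,t \ge 0$ (which follows since $\phi(s) \ge \frac{s}{s+t}\phi(s+t)$ and $\phi(t) \ge \frac{t}{s+t}\phi(s+t)$ by concavity and $\phi(0)=0$). For the Pythagorean inequality~\eqref{it:renyipy} I would apply $\phi$ to the Pythagorean \emph{identity}~\eqref{eq:alphapy1},
\begin{align*}
R_{\alpha}(L||\overline{M}) = \phi\!\left(D_{\alpha}(L||M^{f^*}) + D_{\alpha}(M^{f^*}||\overline{M})\right) \le \phi\!\left(D_{\alpha}(L||M^{f^*})\right) + \phi\!\left(D_{\alpha}(M^{f^*}||\overline{M})\right),
\end{align*}
which is the claimed bound~\eqref{eq:renyipy1}, and likewise for~\eqref{eq:renyipy2} using~\eqref{eq:alphapy2}. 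For the parallelogram inequality~\eqref{it:renyipara}, I would first note that since $\overline{M}\in\mathcal{L}(\pi)$, Theorem~\ref{thm:bisection} gives $D_{\alpha}(L||\overline{M}) = D_{\alpha}(L_{\pi}||\overline{M})$, and each equals $D_{\alpha}(L||M^{f^*}) + D_{\alpha}(M^{f^*}||\overline{M})$ by~\eqref{eq:alphapy1}; applying $\phi$ and subadditivity to each of the two identical left-hand terms yields the stated bound, and symmetrically for the second line. The only genuine work is the subadditivity lemma, which is routine, so I do not anticipate a serious obstacle: the entire argument is bookkeeping threaded through the single monotone concave map $\phi$.
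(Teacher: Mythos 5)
Your proposal is correct and follows essentially the same route as the paper's proof: both treat $R_{\alpha}$ as a strictly increasing, concave transformation (your $\phi$, the paper's $g$) of $D_{\alpha}$, read off the minimizers and the bisection property from monotonicity together with Theorem~\ref{thm:alpha}, and obtain the Pythagorean and parallelogram inequalities by applying subadditivity of this concave map (with $\phi(0)=0$) to the exact identities for the $\alpha$-divergence. The only cosmetic difference is that you prove the subadditivity lemma explicitly, whereas the paper asserts it.
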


\begin{proof}
	First, we consider the mapping, for $x \geq 0$,
	\begin{align*}
		g(x) &:= \dfrac{1}{\alpha-1} \ln(1 + \alpha(\alpha-1)x), \\
		\dfrac{d}{dx} g(x) &= \dfrac{\alpha}{1 + \alpha(\alpha-1)x}, \\
		\dfrac{d^2}{dx^2} g(x) &= - \dfrac{\alpha^2(\alpha-1)}{(1 + \alpha(\alpha-1)x)^2}.
	\end{align*}
	Thus, we see that $g$ is a strictly increasing concave function when $\alpha > 1$. Making use of Theorem \ref{thm:alpha}, we calculate that
	\begin{align*}
		R_{\alpha}(P_{1-\alpha}||L) &= g(D_f(P_{1-\alpha}||L)) \leq g(D_f(M||L)) = R_{\alpha}(M||L), \\
		R_{\alpha}(L||P_{\alpha}) &= g(D_f(L||P_{\alpha})) \leq g(D_f(L||M)) = R_{\alpha}(L||M),
	\end{align*}
	which establish the first two items. We proceed to prove item \eqref{it:renyipy}. For $\alpha > 1$, as $g$ is strictly concave with $g(0) = 0$, $g$ is thus subadditive, which together with the Pythagorean identity for $\alpha$-divergence in Theorem \ref{thm:alpha} yields
	\begin{align*}
		R_{\alpha}(L || \overline{M}) = g(D_f(L || \overline{M})) &= g(D_f(L||M^{f^*}) + D_f(M^{f^*}||\overline{M})) \\
		&\leq g(D_f(L||M^{f^*})) + g(D_f(M^{f^*}||\overline{M})) = R_{\alpha}(L||M^{f^*}) + R_{\alpha}(M^{f^*}||\overline{M}). \\
		R_{\alpha}(\overline{M}||L) = g(D_f(\overline{M}||L)) &= g(D_f(M^{f}||L) + D_f(\overline{M}||M^{f})) \\
		&\leq g(D_f(M^{f}||L)) + g(D_f(\overline{M}||M^{f})) = R_{\alpha}(M^{f}||L) + R_{\alpha}(\overline{M}||M^{f}).
	\end{align*}
	For the bisection property, it can easily be seen as $R_{\alpha}$ is a transformation by $g$ of $D_f$ and the $\alpha$-divergence enjoys the bisection property as stated in Theorem \ref{thm:alpha}. Finally, for item \eqref{it:renyipara}, we apply the previous two items, that is, both the Pythagorean inequality and the bisection property to arrive at the stated conclusion.
\end{proof}

\subsection{A Markov chain version of arithmetic-geometric-harmonic mean inequality for hitting time and mixing time parameters}\label{subsec:MCAMGMHM}

In previous subsections, we have seen that various power means $P_p$ \textcolor{black}{introduced in \eqref{def:Pp}} appear naturally as $f$ and $f^*$-projections of appropriate $f$-divergences. For example, $P_{1/2}$ appears as both the $f^*$-projection and $f$-projection under the squared Hellinger distance, while in the literature \cite{DM09,BD01,Choi16,CH18} the additive reversiblization $P_1$ and the two Metropolis-Hastings reversiblizations $P_{-\infty}$ and $P_{\infty}$ appear as projections under the total variation distance, which is a special case of the $f$-divergence by taking $f$ to be the mapping $t \mapsto |t-1|$. The aim of this subsection is to offer comparison theorems between these reversiblizations for their hitting and mixing time parameters. 

To allow for effective comparison between these reversiblizations, we recall the notion of Peskun ordering of continuous-time Markov chains. This partial ordering was first introduced by \cite{Pesk73} in the context of discrete-time Markov chains on a finite state space. Various generalizations have then been obtained, for example to general state space in \cite{Tie98}, by \cite{LM08} to continuous-time Markov chains and recently by \cite{AL21} to the non-reversible setting.

\begin{definition}[Peskun ordering]
	Suppose that we have two continuous-time Markov chains with generators $L_1, L_2 \in \mathcal{L}(\pi)$ respectively. $L_1$ is said to dominate $L_2$ off-diagonally, written as $L_1 \succeq L_2$, if for all $x \neq y \in \mathcal{X}$, we have
	$$L_1(x,y) \geq L_2(x,y).$$
\end{definition}
We write the weighted inner product with respect to $\pi$ by $\langle\cdot, \cdot \rangle_\pi$, that is, 
\begin{align}\label{eq:innerprod}
	\langle f,g\rangle_\pi=\sum_{x\in \mathcal{X}}f(x)g(x)\pi(x),
\end{align}
for any functions $f,g: \mathcal{X}\rightarrow\mathbb{R}$. We denote by $\ell^2(\pi)$ to be the weighted Hilbert space endowed with the inner product $\langle \cdot, \cdot \rangle_{\pi}$. The quadratic form of $L \in \mathcal{L}(\pi)$ can then be expressed as
\begin{align}\label{eq:quadraticform}
	\langle -L f, f \rangle_{\pi} = \dfrac{1}{2} \sum_{x,y \in \mathcal{X}} \pi(x)L(x,y)(f(x)-f(y))^2.
\end{align}
For $L \in \mathcal{L}(\pi)$, we are particularly interested in the following list of parameters that assess or quantify the speed of convergence in terms of hitting and mixing time:
\begin{itemize}
	\item(Hitting times) We write
	\begin{align*}
		\tau_{A} &= \tau_{A}(L) := \inf\{t \geq 0; X_t \in A\}
	\end{align*}
	to be the first hitting time to the set $A \subseteq \mathcal{X}$ of the chain $X = (X_t)_{t \geq 0}$ with generator $L$, and the usual convention of $\inf \emptyset = \infty$ applies. We also adapt the notation that $\tau_y := \tau_{\{y\}}$ for $y \in \mathcal{X}$. One hitting time parameter of interest is the average hitting time $t_{av}$, defined to be
	$$t_{av} = t_{av}(L,\pi) := \sum_{x,y} \mathbb{E}_x(\tau_y) \pi(x)\pi(y).$$
	The eigentime identity gives that $t_{av}$ equals to the sum of the reciprocals of the non-zero eigenvalues of $-L$, see for instance \cite{Mao04,CuiMao10}. This is also known as the random target lemma in \cite{LPW17}.
	
	\item(Spectral gap) We write the spectral gap of $L$ to be
	\begin{align}\label{def:spectralgap}
		\lambda_2 = \lambda_2(L,\pi) := \inf\big\{\langle -Lf,f \rangle_{\pi}:\ f \in \mathbb{R}^{\mathcal{X}}, \pi(f) = 0, \pi(f^2) = 1\big\}.
	\end{align}
	The relaxation time $t_{rel}$ is the reciprocal of $\lambda_2$, that is,
	$$t_{rel} = t_{rel}(L,\pi) := \dfrac{1}{\lambda_2}.$$
	We see that in the finite state space setting, $\lambda_2$ is the second smallest eigenvalue of $-L$.
	
	\item(Asymptotic variance) For a mean zero function $h$, i.e., $\pi(h)=0$, the central limit theorem for Markov processes \cite[Theorem $2.7$]{KLO12} gives $t^{-1/2} \int_0^t h(X_s)ds$ converges in probability to a Gaussian distribution with mean zero and variance
	$$\sigma^2(h,L,\pi) := -2 \langle h,g \rangle_{\pi},$$
	where $g$ solves the Poisson equation $Lg = h$.
	
\end{itemize}

With the above notions in mind, we are now ready to state the main result of this subsection:

\begin{theorem}[Peskun ordering of power mean reversiblizations and its consequences]\label{thm:Peskun}
	For $p,q \in \mathbb{R} \cup \{\pm \infty\}$ with $p < q$, for any $f \in \mathbb{R}^{\mathcal{X}}$ we have
	\begin{align*}
		P_{q} &\succeq P_p, \\
		\langle -P_q f, f \rangle_{\pi} &\geq \langle -P_p f, f \rangle_{\pi}.
	\end{align*}
	\textcolor{black}{The above equality holds if and only if $L$ is reversible with respect to $\pi$ so that $P_p = L = L^*$.} Consequently, this leads to
	\begin{enumerate}
		\item(Hitting times)\label{hit} For $\lambda > 0$ and $A \subseteq \mathcal{X}$, we have
		$$
		\mathbb{E}_{\pi}(e^{-\lambda \tau_A(P_p)}) \leq \mathbb{E}_{\pi}(e^{-\lambda \tau_A(P_q)}).
		$$
		In particular, for any $A\subseteq\mathcal{X}$,
		$$
		\mathbb{E}_{\pi}(\tau_A(P_p)) \geq \mathbb{E}_{\pi}(\tau_A(P_q)).
		$$
		Furthermore,
		$$
		t_{av}(P_p,\pi) \geq t_{av}(P_q,\pi).
		$$
	
		\item(Spectral gap) We have
		$$
		\lambda_2(P_p,\pi)\leq \lambda_2(P_q,\pi).
		$$
		That is, $$t_{rel}(P_p,\pi)\geq t_{rel}(P_q,\pi).$$

		\item(Asymptotic variance)\label{asva} For $h \in \ell^2_0(\pi) = \{h;~\pi(h)=0\}$,
		$$
		\sigma^2(h,P_p,\pi) \geq \sigma^2(h,P_q,\pi).
		$$
	\end{enumerate}
\end{theorem}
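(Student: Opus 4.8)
The plan is to reduce the entire theorem to a single pointwise inequality between power means, and then feed the resulting Peskun ordering into the standard comparison machinery. The key observation is that for any fixed non-negative reals $a,b$ (with the conventions fixed in \eqref{def:Pp} and its stated limiting cases), the generalized power mean $p \mapsto \left(\frac{a^p + b^p}{2}\right)^{1/p}$ is non-decreasing on $\mathbb{R} \cup \{\pm\infty\}$, and is strictly increasing in $p$ unless $a = b$. Applying this with $a = L(x,y)$ and $b = L_{\pi}(x,y)$ for each off-diagonal pair $x \neq y$ immediately yields $P_q(x,y) \geq P_p(x,y)$ whenever $p < q$, which is exactly the Peskun domination $P_q \succeq P_p$. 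For the equality characterization, the strictness of the power mean inequality shows that $P_q(x,y) = P_p(x,y)$ for all $x \neq y$ forces $L(x,y) = L_{\pi}(x,y)$, i.e. the detailed balance condition $\pi(x)L(x,y) = \pi(y)L(y,x)$, so that $L$ is $\pi$-reversible and every $P_p$ collapses to $L = L_{\pi} = L^*$.

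The quadratic form inequality is then a one-line consequence. Substituting the pointwise Peskun ordering into the quadratic form representation \eqref{eq:quadraticform},
\begin{align*}
	\langle -P_q f, f \rangle_{\pi} - \langle -P_p f, f \rangle_{\pi} = \frac{1}{2} \sum_{x, y \in \mathcal{X}} \pi(x)\big(P_q(x,y) - P_p(x,y)\big)(f(x) - f(y))^2 \geq 0,
\end{align*}
since every summand is non-negative ($\pi(x) > 0$, $P_q(x,y) \geq P_p(x,y)$, and the squared difference is non-negative). Equality for all $f$ again forces $P_q = P_p$ off-diagonally, recovering the same reversibility characterization as above.

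For the listed consequences, I would invoke the established theory linking Peskun ordering to convergence parameters, applied with $L_1 = P_q \succeq P_p = L_2$. The spectral gap ordering $\lambda_2(P_p,\pi) \leq \lambda_2(P_q,\pi)$ follows directly from the variational characterization \eqref{def:spectralgap} together with the quadratic form inequality, since taking infima of the constrained Rayleigh quotients preserves the pointwise domination. By the Courant--Fischer min-max principle the same quadratic form domination orders the entire spectrum, $\lambda_i(-P_p) \leq \lambda_i(-P_q)$ for every nonzero eigenvalue; combined with the eigentime identity expressing $t_{av}$ as the sum of reciprocals of the nonzero eigenvalues of $-L$, this gives $t_{av}(P_p,\pi) \geq t_{av}(P_q,\pi)$. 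The asymptotic variance ordering $\sigma^2(h, P_p, \pi) \geq \sigma^2(h, P_q, \pi)$ and the hitting-time Laplace transform ordering of item \eqref{hit} are the classical consequences of Peskun domination for $\pi$-reversible generators established in \cite{Pesk73,Tie98,LM08,AL21}.

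The genuine content lies entirely in the power mean monotonicity of the first step and its strict version for the equality case; the quadratic form and spectral consequences are elementary once the ordering is in hand. The main obstacle is the hitting-time statement in item \eqref{hit}: the Laplace transform ordering of $\tau_A$ does \emph{not} follow from quadratic form domination alone, so here I would rely on the dedicated Peskun comparison results for hitting times in \cite{LM08} rather than attempt a self-contained argument, reserving the self-contained derivation for the spectral gap, $t_{av}$, and asymptotic variance.
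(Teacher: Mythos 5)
Your core argument matches the paper's: the classical power mean inequality gives the pointwise ordering $P_q(x,y)\geq P_p(x,y)$, hence the Peskun domination and the quadratic form inequality via \eqref{eq:quadraticform}; the equality case reduces to $L(x,y)=L_{\pi}(x,y)$ off-diagonally, i.e.\ $\pi$-reversibility; the spectral gap follows from \eqref{def:spectralgap}, the full eigenvalue ordering from the variational (Courant--Fischer) principle combined with the eigentime identity for $t_{av}$, and the asymptotic variance from \cite{LM08}. All of that is exactly the paper's route.

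The one genuine defect is your treatment of item \eqref{hit}. You assert that the Laplace transform ordering of $\tau_A$ ``does \emph{not} follow from quadratic form domination alone'' and then lean on ``dedicated Peskun comparison results for hitting times in \cite{LM08}.'' Both halves of this are problematic. First, the claim is false: the paper derives precisely this ordering from Dirichlet form domination, via the variational principle for the Laplace transform of hitting times in \cite[Theorem $3.1$]{HM18}, which expresses $\mathbb{E}_{\pi}(e^{-\lambda\tau_A})$ through a variational formula that is monotone in the quadratic form; the ordering $\langle -P_qf,f\rangle_{\pi}\geq\langle -P_pf,f\rangle_{\pi}$ then immediately yields $\mathbb{E}_{\pi}(e^{-\lambda \tau_A(P_p)}) \leq \mathbb{E}_{\pi}(e^{-\lambda \tau_A(P_q)})$, and the expected hitting time ordering follows by subtracting $1$, dividing by $\lambda$, and letting $\lambda\to 0$. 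Second, \cite{LM08} is the reference the paper uses only for the asymptotic variance comparison (its Theorem $6$); it is not a source for hitting time comparisons, so as written your proof of item \eqref{hit} rests on a citation that does not contain the needed result. The fix is simply to replace your deferral with the \cite{HM18} variational principle, which restores a complete argument and in fact makes the hitting time item just as much a consequence of the quadratic form domination as the spectral statements.
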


\begin{proof}
	For $q > p$, by the classical power mean inequality \citep{Lin74}, we thus have for $x \neq y \in \mathcal{X}$,
	$$P_q(x,y) \geq P_p(x,y),$$
	which consequently yields, according to \eqref{eq:quadraticform}, 
	\begin{align*}
		P_{q} &\succeq P_p, \\
		\langle -P_q f, f \rangle_{\pi} &\geq \langle -P_p f, f \rangle_{\pi}.	
	\end{align*}
	\textcolor{black}{The power mean equality holds if and only if $P_q(x,y) = P_p(x,y)$ for all $x \neq y$ if and only if $L(x,y) = L_{\pi}(x,y)$ for all $x \neq y$ if and only if $L$ is $\pi$-reversible.} The remaining inequalities are consequences of the Peskun ordering between $P_q$ and $P_p$. Precisely, using the variational principle for the Laplace transform of hitting time as presented in \cite[Theorem $3.1$]{HM18}, we arrive at 
	$$
	\mathbb{E}_{\pi}(e^{-\lambda \tau_A(P_p)}) \leq \mathbb{E}_{\pi}(e^{-\lambda \tau_A(P_q)}).
	$$
	Subtracting by 1 on both sides and dividing by $\lambda$ followed by taking $\lambda \to 0$ gives
	$$
	\mathbb{E}_{\pi}(\tau_A(P_p)) \geq \mathbb{E}_{\pi}(\tau_A(P_q)).
	$$
	Using the variational principle for eigenvalues of $\pi$-reversible generators, each eigenvalue of $-P_q$ is greater than or equal to that of $-P_p$. By means of the eigentime identity, we see that 
	$$
	t_{av}(P_p,\pi) \geq t_{av}(P_q,\pi).
	$$
	In particular, for the second smallest eigenvalue, we have
	$$
	\lambda_2(P_p,\pi)\leq \lambda_2(P_q,\pi).
	$$
	Finally, for the asymptotic variances, the ordering readily follows from \cite[Theorem $6$]{LM08}.
\end{proof}

By comparing the power mean reversiblizations $P_p$ with $p \in \{-\infty,-1,0,1,2,\infty\}$ in the above theorem, we obtain the following Markov chain version of the classical quadratic-arithmetic-geometric-harmonic inequality:
\begin{corollary}[Markov chain version of the classical quadratic-arithmetic-geometric-harmonic inequality]\label{cor:AMGMHM}
	For $p \in \mathbb{R} \cup \{\pm \infty\}$ and $L \in \mathcal{L}$, we consider the power mean reversiblizations $P_p$ with $p \in \{-\infty,-1,0,1,2,\infty\}$ to arrive at
	\begin{enumerate}
		\item(Hitting times) For $\lambda > 0$ and $A \subseteq \mathcal{X}$, we have
		$$
		\mathbb{E}_{\pi}(e^{-\lambda \tau_A(P_{-\infty})}) \leq \mathbb{E}_{\pi}(e^{-\lambda \tau_A(P_{-1})}) \leq  \mathbb{E}_{\pi}(e^{-\lambda \tau_A(P_0)}) \leq \mathbb{E}_{\pi}(e^{-\lambda \tau_A(P_1)}) \leq \mathbb{E}_{\pi}(e^{-\lambda \tau_A(P_2)}) \leq \mathbb{E}_{\pi}(e^{-\lambda \tau_A(P_\infty)}).
		$$
		In particular, for any $A\subseteq\mathcal{X}$,
		$$
		\mathbb{E}_{\pi}(\tau_A(P_{-\infty})) \geq \mathbb{E}_{\pi}(\tau_A(P_{-1})) \geq \mathbb{E}_{\pi}(\tau_A(P_0)) \geq \mathbb{E}_{\pi}(\tau_A(P_1)) \geq \mathbb{E}_{\pi}(\tau_A(P_2)) \geq \mathbb{E}_{\pi}(\tau_A(P_{\infty})).
		$$
		Furthermore,
		$$
		t_{av}(P_{-\infty},\pi) \geq t_{av}(P_{-1},\pi) \geq t_{av}(P_0,\pi) \geq t_{av}(P_1,\pi) \geq t_{av}(P_2,\pi) \geq t_{av}(P_{\infty},\pi).
		$$
		
		\item(Spectral gap) We have
		$$
		\lambda_2(P_{-\infty},\pi)\leq \lambda_2(P_{-1},\pi) \leq \lambda_2(P_0,\pi)\leq \lambda_2(P_1,\pi) \leq
		\lambda_2(P_2,\pi)\leq \lambda_2(P_{\infty},\pi).
		$$
		That is, 
		$$t_{rel}(P_{-\infty},\pi)\geq t_{rel}(P_{-1},\pi) \geq t_{rel}(P_0,\pi)\geq t_{rel}(P_1,\pi) \geq t_{rel}(P_2,\pi)\geq t_{rel}(P_{\infty},\pi).$$

		\item(Asymptotic variance) For $h \in \ell^2_0(\pi) = \{h;~\pi(h)=0\}$,
		$$
		\sigma^2(h,P_{-\infty},\pi) \geq \sigma^2(h,P_{-1},\pi) \geq 
		\sigma^2(h,P_0,\pi) \geq \sigma^2(h,P_1,\pi) \geq 
		\sigma^2(h,P_2,\pi) \geq \sigma^2(h,P_{\infty},\pi).
		$$
	\end{enumerate}
\textcolor{black}{All the above equalities hold if $L$ is $\pi$-reversible with $L = L^*$ so that all the power mean reversiblizations $P_p$ collapse to $L$.}
\end{corollary}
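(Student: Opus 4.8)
The plan is to recognize Corollary \ref{cor:AMGMHM} as a direct specialization of Theorem \ref{thm:Peskun} to the particular increasing sequence of exponents
$$-\infty < -1 < 0 < 1 < 2 < \infty,$$
and then to chain the resulting pairwise comparisons by transitivity. First I would note that each listed exponent indeed indexes one of the power mean reversiblizations $P_p$ defined in \eqref{def:Pp}, with the limiting cases $P_{-\infty}$, $P_0$ and $P_{\infty}$ being the minimum, geometric and maximum reversiblizations identified in Section \ref{sec:prelim}; so the six objects being compared are genuinely instances of the family to which Theorem \ref{thm:Peskun} applies.

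Next, for each adjacent pair $(p,q)$ in this sequence with $p < q$, I would invoke Theorem \ref{thm:Peskun} to obtain both the Peskun ordering $P_q \succeq P_p$ and the quadratic form domination $\langle -P_q f, f \rangle_{\pi} \geq \langle -P_p f, f \rangle_{\pi}$ for all $f \in \mathbb{R}^{\mathcal{X}}$. Feeding these into the three consequences of that theorem then yields the inequalities between consecutive terms: the Laplace-transform hitting time bound (together with its $\lambda \to 0$ limit and the eigentime identity for $t_{av}$) gives item \eqref{hit}, the variational characterization of eigenvalues gives the spectral gap inequality in item (2), and the asymptotic variance ordering gives item \eqref{asva}.

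I would then chain these pairwise comparisons by transitivity of $\geq$ (respectively $\leq$) across the full sequence $-\infty, -1, 0, 1, 2, \infty$, which immediately produces the complete strings of inequalities asserted in each item. For the equality statement, I would appeal to the equality characterization already furnished by Theorem \ref{thm:Peskun}: equality in the quadratic form, and hence in all the derived parameters, holds precisely when $L(x,y) = L_{\pi}(x,y)$ for every $x \neq y$, i.e.\ when $L$ is $\pi$-reversible, in which case every $P_p$ collapses to $L = L^*$.

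Since every ingredient is supplied by Theorem \ref{thm:Peskun}, there is no substantive obstacle here; the only point requiring mild care is handling the limiting exponents $\pm\infty$ consistently throughout the chain, but this is already accounted for because Theorem \ref{thm:Peskun} is stated for all $p, q \in \mathbb{R} \cup \{\pm\infty\}$.
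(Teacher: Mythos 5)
Your proposal is correct and matches the paper's intent exactly: the paper offers no separate proof, presenting the corollary as an immediate consequence of Theorem \ref{thm:Peskun} applied to the exponents $-\infty < -1 < 0 < 1 < 2 < \infty$ and chained by transitivity, which is precisely what you do. The only minor caution is that your equality discussion asserts an ``if and only if'' for the derived parameters, whereas the corollary (unlike the quadratic-form statement in Theorem \ref{thm:Peskun}) only claims the sufficient direction, which is trivial since all $P_p$ collapse to $L$ when $L$ is $\pi$-reversible.
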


In view of the above Corollary, we thus see that the power mean reversiblizations $P_p$ with $p \in \mathbb{R}$ interpolates between the two Metropolis-Hastings reversiblizations $P_{-\infty}$ and $P_{\infty}$. Corollary \ref{cor:AMGMHM} is important from at least the following three perspectives: first, it is a mathematically elegant generalization of the AM-GM-HM inequality in the context of Markov chain. Second, it offers new bounds on the spectral gap of the additive reversiblization $\lambda_2(P_1)$, which can be used to further bound the rate of convergence of the original non-reversible Markov chain in the spirit of \cite{Fill91}. Third, it offers comparison theorems for important hitting time and mixing time parameters of various reversible samplers such as $P_{-\infty}$ (Metropolis-Hastings), $P_{-1}$ (Barker proposal) and $P_{\infty}$ (the second Metropolis-Hastings as in \cite{ChoiMPRF,CH18}). This can yield practical guidance on the choice of samplers.

 We also remark that in addition to the above hitting time and mixing time parameters, we should also take into account of the transition rates for comparison between different reversiblizations, since the transition rates of the same row (i.e. the sum of off-diagonal entries of the row) are in general different between $P_p$ and $P_q$ for $p \neq q$ unless $L \in \mathcal{L}(\pi)$ is $\pi$-reversible. Interested readers should also consult the discussion in \cite[discussion above Remark $2.2$]{DM09}.

\textcolor{black}{Inspired by one of the referees' suggestions, we can in fact consider a regularized or penalized entropy minimization problem, so that the resulting projection has comparable transition rates as say $P_{-\infty}$, the classical Metropolis-Hastings reversiblization. Precisely, let $\lambda \geq 0$ be a regularization hyperparameter that controls the strength of regularization. We can consider the following $\ell^2$-regularized optimization problems given by
	\begin{align*}
		M^f(L,\pi,\lambda) &:= \argmin_{M \in \mathcal{L}(\pi)} \left(D_f(M || L) + \lambda \sum_{x \in \mathcal{X}} \left(1 + M(x,x)\right)^2\right), \\
		M^{f^*}(L,\pi,\lambda) &:= \argmin_{M \in \mathcal{L}(\pi)} \left(D_f(L || M) + \lambda \sum_{x \in \mathcal{X}} \left(1 + M(x,x) \right)^2\right).
	\end{align*}
When $\lambda = 0$, the regularization effect is zero and hence we retrieve $M^f(L,\pi) = M^f(L,\pi,0)$ and $M^{f^*}(L,\pi) = M^{f^*}(L,\pi,0)$. On the other hand, we can choose $\lambda$ to be large, which forces the row transition rates of $M^f(L,\pi,\lambda)$ and $M^{f^*}(L,\pi,\lambda)$ to be close to $1$. These resulting projections can then be compared with some baseline algorithms such as $P_{-\infty}$ for an arguably fair comparison since we have taken into account of transition rates. Note that we can also consider other types of regularization such as $\ell^1$-regularization or more generally $\ell^p$-regularization. We shall not pursue this direction in this manuscript.
}

\subsection{Approximating $f$-divergence by $\chi^2$-divergence and an approximate triangle inequality}\label{subsec:approximate}

In this subsection, inspired by the technique of approximating $f$-divergence with Taylor's expansion \cite{NN14}, we investigate approximating $f$-divergence using Taylor's expansion by $\chi^2$-divergence for sufficiently smooth $f$. In practice, one may wish to compute projections such as $D_f(L||M^{f^*})$ and $D_f(M^{f}||L)$, yet in general the $f^*$-projection $M^{f^*}$ and $f$-projection $M^f$ may not admit a closed-form. Our main result below demonstrates that $D_f(L||M)$ can be approximated by $D_{\chi^2}(L||M)$ (that is, the $\chi^2$-divergence with generator $t \mapsto (t-1)^2$) modulo a prefactor error coefficient $\dfrac{f^{\prime \prime}(1)}{2}$ and an additive error term $\dfrac{1}{3!} ||f^{(3)}||_{\infty} (\overline{m}-\underline{m})^3$ in the Theorem below:

\begin{theorem}
	For strictly convex and three-times continuously differentiable $f$, for any $L,M \in \mathcal{L}$, we have
	\begin{align}\label{eq:approxf}
		\bigg|D_f(L||M) - \dfrac{f^{\prime \prime}(1)}{2} D_{\chi^2}(L||M)\bigg| \leq \dfrac{1}{3!} ||f^{(3)}||_{\infty} (\overline{m}-\underline{m})^3,
	\end{align}
	where
	\begin{align*}
		\overline{m} = \overline{m}(L,M) &:= \max_{L(x,y),M(x,y) > 0} \dfrac{L(x,y)}{M(x,y)}, \quad \underline{m} = \underline{m}(L,M) := \min_{L(x,y),M(x,y) > 0} \dfrac{L(x,y)}{M(x,y)}, \\
		||f^{(3)}||_{\infty}(L,M) &:= \sup_{x \in [\underline{m},\overline{m}]} |f^{(3)}(x)|.
	\end{align*}
	Note that the norm $||f^{(3)}||_{\infty}(L,M)$ depends on $(L,M)$ via $\overline{m}, \underline{m}$. In particular, for any $\overline{M} \in \mathcal{L}(\pi)$ we have
	\begin{align*}
	&\left|D_f(L||\overline{M}) - \left(D_f(L||P_2) + D_f(P_2||\overline{M})\right)\right| \\
	&\leq \dfrac{1}{3!} ||f^{(3)}||_{\infty}(L,\overline{M}) (\overline{m}(L,\overline{M})-\underline{m}(L,\overline{M}))^3 + \dfrac{1}{3!} ||f^{(3)}||_{\infty}(L,P_2) (\overline{m}(L,P_2)-\underline{m}(L,P_2))^3 \\
	&\quad + \dfrac{1}{3!} ||f^{(3)}||_{\infty}(P_2,\overline{M}) (\overline{m}(P_2,\overline{M})-\underline{m}(P_2,\overline{M}))^3,
	\end{align*}

	where we recall that $P_{2}$ is the $P_2$-reversiblization as stated in Theorem \ref{thm:chi2}. Similarly, we have
	\begin{align*}
		&\left|D_f(\overline{M}||L) - \left(D_f(P_{-1}||L) + D_f(\overline{M}||P_{-1})\right)\right| \\
		&\leq \dfrac{1}{3!} ||f^{(3)}||_{\infty}(\overline{M},L) (\overline{m}(\overline{M},L)-\underline{m}(\overline{M},L))^3 + \dfrac{1}{3!} ||f^{(3)}||_{\infty}(P_{-1},L) (\overline{m}(P_{-1},L)-\underline{m}(P_{-1},L))^3 \\
		&\quad + \dfrac{1}{3!} ||f^{(3)}||_{\infty}(\overline{M},P_{-1}) (\overline{m}(\overline{M},P_{-1})-\underline{m}(\overline{M},P_{-1}))^3,
	\end{align*}
	where we recall that $P_{-1}$ is the $P_{-1}$-reversiblization as stated in Theorem \ref{thm:chi2}.
\end{theorem}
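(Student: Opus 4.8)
The plan is to establish the pointwise estimate \eqref{eq:approxf} by a termwise second-order Taylor expansion of $f$ about $t=1$ with a third-order Lagrange remainder, in the spirit of \cite{NN14}, and then to deduce the two ``in particular'' triangle-type estimates by feeding \eqref{eq:approxf} into the Pythagorean identity for $D_{\chi^2}$ supplied by Corollary \ref{thm:chi2}.

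For the main estimate, I would first record the structural fact that $f'(1)=0$: because $f \geq 0$ and $f(1)=0$, the point $t=1$ is a global minimum of the differentiable function $f$, so its derivative there vanishes. Now fix a pair $x \neq y$ with $L(x,y),M(x,y)>0$ and set $t_{xy}:=L(x,y)/M(x,y)\in[\underline m,\overline m]$. Taylor's theorem gives $f(t_{xy})=\tfrac{f''(1)}{2}(t_{xy}-1)^2+\tfrac{f^{(3)}(\xi_{xy})}{3!}(t_{xy}-1)^3$ for some $\xi_{xy}$ lying between $1$ and $t_{xy}$, where I have used $f(1)=f'(1)=0$. Multiplying by $\pi(x)M(x,y)$, summing over $x\neq y$, and recognizing that $\sum_{x\neq y}\pi(x)M(x,y)(t_{xy}-1)^2=D_{\chi^2}(L||M)$ (since $M(x,y)(t_{xy}-1)^2=(L(x,y)-M(x,y))^2/M(x,y)$), I obtain the exact identity
$$ D_f(L||M)-\frac{f''(1)}{2}D_{\chi^2}(L||M)=\frac{1}{3!}\sum_{x\neq y}\pi(x)M(x,y)\,f^{(3)}(\xi_{xy})\,(t_{xy}-1)^3. $$

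It then remains to bound the right-hand side in absolute value by $\tfrac{1}{3!}\|f^{(3)}\|_\infty(\overline m-\underline m)^3$, which is the step I expect to be the main obstacle. Each remainder point satisfies $\xi_{xy}\in[\underline m,\overline m]$, so $|f^{(3)}(\xi_{xy})|\le\|f^{(3)}\|_\infty$ by definition of the sup-norm on $[\underline m,\overline m]$; the delicate part is the cubic factor $|t_{xy}-1|^3$ together with the accumulated weights $\pi(x)M(x,y)$. The natural route is to argue that the ratios $t_{xy}$ straddle $1$ --- the generator analogue of the fact that two distinct probability vectors have a likelihood ratio crossing $1$ --- so that $|t_{xy}-1|\le\overline m-\underline m$ uniformly, and then to control $\sum_{x\neq y}\pi(x)M(x,y)$; reconciling these ingredients with the clean constant $(\overline m-\underline m)^3$ is where the zero-row-sum structure of $L$ and $M$ must be used with care.

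Finally, for the two triangle-type estimates I would simply apply \eqref{eq:approxf} three times and invoke the $\chi^2$ Pythagorean identity. For the first, Corollary \ref{thm:chi2} gives $D_{\chi^2}(L||\overline{M})=D_{\chi^2}(L||P_2)+D_{\chi^2}(P_2||\overline{M})$ because $P_2=M^{f^*}$ is precisely the $\chi^2$-projection; hence the quadratic terms cancel in
$$ D_f(L||\overline{M})-\big(D_f(L||P_2)+D_f(P_2||\overline{M})\big)=\sum_{(A,B)}\pm\Big(D_f(A||B)-\frac{f''(1)}{2}D_{\chi^2}(A||B)\Big), $$
where $(A,B)$ ranges over $(L,\overline{M})$, $(L,P_2)$, $(P_2,\overline{M})$ with signs $+,-,-$, and the triangle inequality together with \eqref{eq:approxf} applied to each pair yields the stated bound. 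The second estimate is identical upon using instead the dual identity $D_{\chi^2}(\overline{M}||L)=D_{\chi^2}(\overline{M}||P_{-1})+D_{\chi^2}(P_{-1}||L)$ from Corollary \ref{thm:chi2}, with $P_{-1}=M^{f}$ the $\chi^2$ $f$-projection.
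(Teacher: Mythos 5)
Your proposal follows the paper's proof essentially step for step: both arguments expand $f$ to second order about $t=1$ using $f(1)=f'(1)=0$ (the paper via the integral form of the remainder, you via the Lagrange form --- an immaterial difference, and your version avoids the paper's apparent typo of integrating from $\underline{m}$ rather than from $1$), and both deduce the two approximate triangle inequalities by applying \eqref{eq:approxf} to the three pairs and cancelling the $\chi^2$ terms through the Pythagorean identity of Corollary \ref{thm:chi2}. Indeed, you spell out that cancellation mechanism explicitly, whereas the paper compresses it into the single sentence ``by applying \eqref{eq:approxf} three times each we obtain the two approximate triangle inequalities''; your reading, with $P_2=M^{f^*}$ and $P_{-1}=M^{f}$ the $\chi^2$-projections, is exactly the intended one.

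The step you flag as the main obstacle is precisely the step the paper does not carry out either: after the one-variable Taylor bound, the paper passes immediately (``as a result, we arrive at'') to the summed inequality \eqref{eq:approxf}. That passage implicitly requires both that the ratios $t_{xy}=L(x,y)/M(x,y)$ straddle $1$, so that $|t_{xy}-1|\le\overline{m}-\underline{m}$, and that the total weight $\sum_{x\neq y}\pi(x)M(x,y)$ be at most $1$. Both hold for probability vectors, the setting of the approximation technique in \cite{NN14}, but neither is guaranteed for generators, whose off-diagonal rows carry no normalization --- exactly the two issues you isolate. So there is no further idea in the paper's proof for you to have missed, and your caution is well placed rather than a defect: the step cannot be completed as stated. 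For instance, taking $L=2M$ with $M\in\mathcal{L}$ not identically zero off-diagonal gives $\overline{m}=\underline{m}=2$, so the right-hand side of \eqref{eq:approxf} vanishes, while the left-hand side equals $\bigl|f(2)-\tfrac{1}{2}f''(1)\bigr|\sum_{x\neq y}\pi(x)M(x,y)$, which is positive for generic strictly convex smooth $f$ (e.g.\ $f(t)=t\ln t - t+1$); more generally the two sides of \eqref{eq:approxf} scale differently under $(L,M)\mapsto(cL,cM)$. Any complete proof --- yours or the paper's --- would need an additional straddling or normalization hypothesis on the pair $(L,M)$, or a restatement of the bound that carries the total weight $\sum_{x\neq y}\pi(x)M(x,y)$ as a factor on the right-hand side.
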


We can interpret the expression $\overline{m}(L,M) - \underline{m}(L,M)$ as quantifying the difference between the two generators $L$ and $M$. In the case when $L = M$, equality is achieved in \eqref{eq:approxf} as the right hand side yields $\overline{m}(L,M) - \underline{m}(L,M) = 0$ while the left hand side gives $D_f(L||M) = D_{\chi^2}(L||M) = 0$.

\begin{proof}
	For strictly convex and three-times continuously differentiable $f$, by the integral form of Taylor's expansion and since $f(1) = f^{\prime}(1) = 0$, we see that, for $x \in (\underline{m},\overline{m})$,
	\begin{align*}
		f(x) &= f(1) + f^{\prime}(1) (x-1) + \dfrac{f^{\prime \prime}(1)}{2}(x-1)^2 + \dfrac{1}{2!} \int_{\underline{m}}^x (x-t)^2 f^{(3)}(t)\,dt \\
		&= \dfrac{f^{\prime \prime}(1)}{2}(x-1)^2 + \dfrac{1}{2!} \int_{\underline{m}}^x (x-t)^2 f^{(3)}(t)\,dt.
	\end{align*}
	As a result, we arrive at
	\begin{align*}
		\bigg|D_f(L||M) - \dfrac{f^{\prime \prime}(1)}{2} D_{\chi^2}(L||M)\bigg| \leq \dfrac{1}{3!} ||f^{(3)}||_{\infty} (\overline{m}-\underline{m})^3.
	\end{align*}
	By applying \eqref{eq:approxf} three times each we obtain the two approximate triangle inequalities.
\end{proof}

\subsection{$f$ and $f^*$-projection centroids of a sequence of Markov chains}\label{subsec:centroid}

Given a sequence of Markov generators $(L_i)_{i=1}^n$, where $L_i \in \mathcal{L}$ for each $i = 1,\ldots,n$, what is the closest $\pi$-reversible generator(s) $M \in \mathcal{L}(\pi)$ on average, where the distance is measured in terms of $f$-divergence $D_f$? Precisely, we define the notions of $f^*$-projection centroid and $f$-projection centroid to be respectively
\begin{align*}
	M_{n}^{f^*} &= M_n^{f^*}(L_1,\ldots,L_n,\pi) := \argmin_{M \in \mathcal{L}(\pi)} \sum_{i=1}^n D_f(L_i || M),\\ M_{n}^{f} &= M_n^{f}(L_1,\ldots,L_n,\pi) := \argmin_{M \in \mathcal{L}(\pi)} \sum_{i=1}^n D_f(M || L_i).
\end{align*}
Note that in the special case of $n = 1$, the above notions reduce to $M_{1}^f = M^f$ and $M_{1}^{f^*} = M^{f^*}$ respectively as introduced in \eqref{def:emprojection}. This notion is analogous to that of empirical risk minimization or loss minimization that arises in statistics and machine learning: given $n$ pairs of $(x_i,y_i)_{i=1}^n$, what is the least square regression line that minimize the total squared residuals (i.e. $\ell^2$ loss)? In the context of Markov chains, given $n$ Markov generators $(L_i)_{i=1}^n$, we are looking for a reversible $M \in \mathcal{L}(\pi)$ that minimize the total deviation or discrepancy measured by $ \sum_{i=1}^n D_f(L_i || M)$ or $\sum_{i=1}^n D_f(M || L_i)$ with respect to $D_f$. Similar notions of information centroids have also been proposed in the literature for probability measures, see for example \cite{NB11,NN09,Nielsen2020} and the references therein.

Inspired by the graphs in \cite{BD01,CH18,WW21} and to visualize the concept of centroid, we illustrate two $f$-projection centroids in a rectangle and in an eight-sided polygon in Figure \ref{fig:centroids}. Similar graphs can be drawn for $f^*$-projection centroids but with the direction of the arrows flipped.

\begin{figure}
	\centering
	\begin{subfigure}{.6\textwidth}
		\centering
		\includegraphics[width=\linewidth]{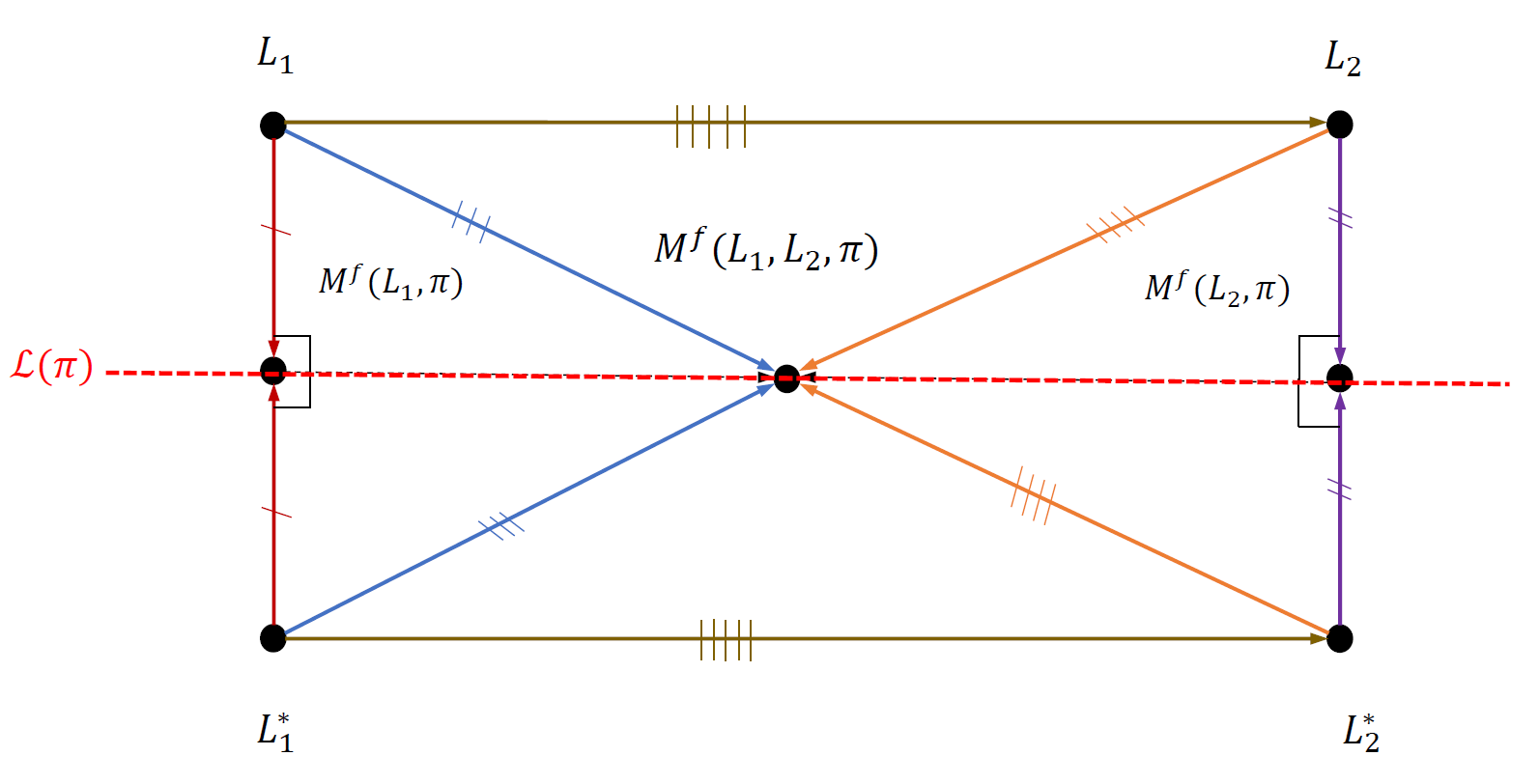}
		\caption{A rectangle generated by $L_1, L_2$ that admit $\pi$ as stationary distribution with their $\pi$-dual $L_1^*, L_2^*$. The $f$-projection centroid is $M^f_{2}(L_1,L_2,\pi)$. \textcolor{black}{Note that $D_f(L_1||L_2) = D_f(L_1^*||L_2^*)$ according to the bisection property in Theorem \ref{thm:bisection}.}}
		\label{fig:rectangle}
	\end{subfigure}%
	\vspace{0.5cm}
	\begin{subfigure}{.5\textwidth}
		\centering
		\includegraphics[width=\linewidth]{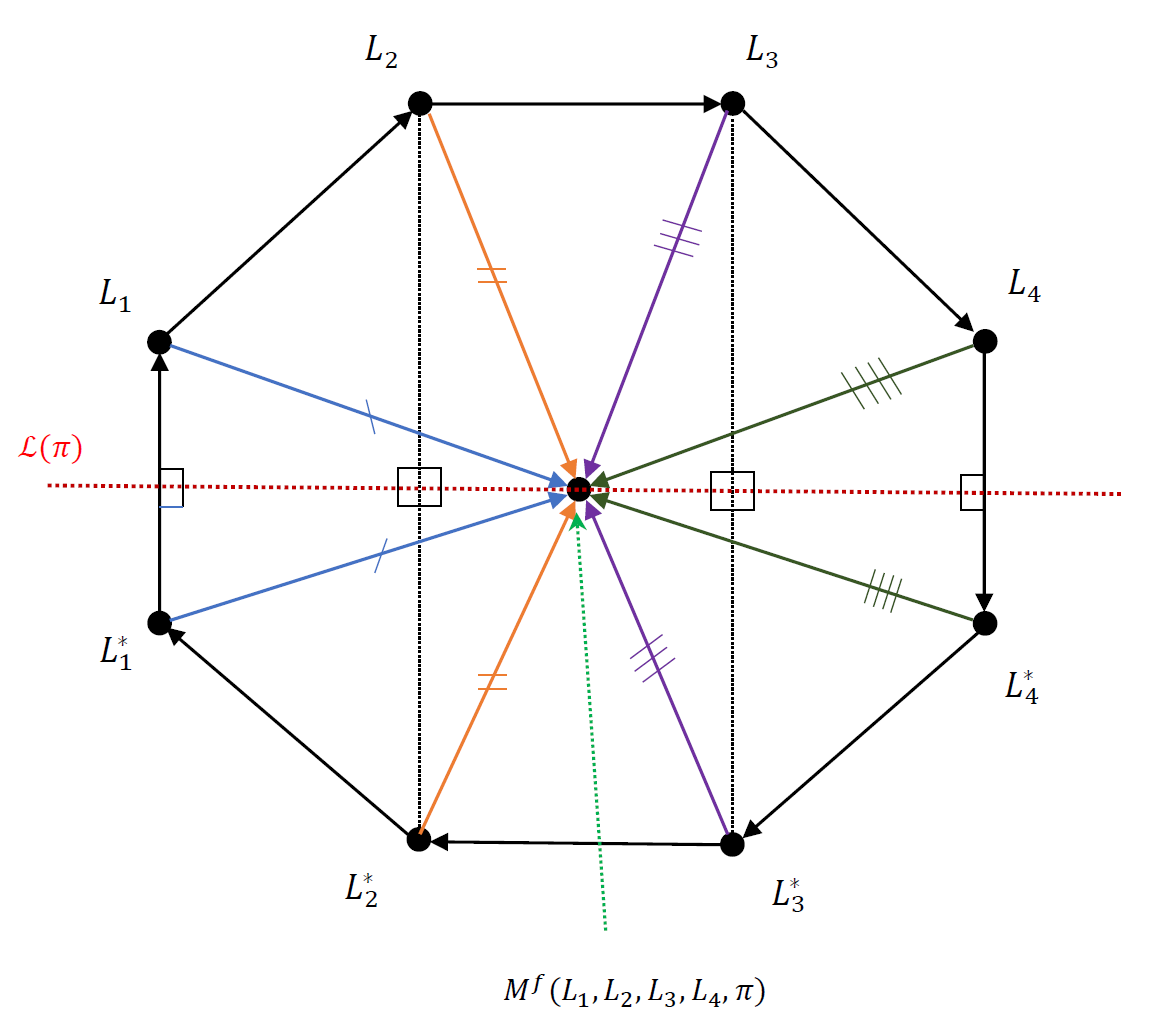}
		\caption{An eight-sided polygon generated by $L_i$ that admit $\pi$ as stationary distribution with their $\pi$-dual $L_i^*$ for $i = 1,2,3,4$. The $f$-projection centroid is $M^f(L_1,L_2,L_3,L_4,\pi)$.}
		\label{fig:polygon}
	\end{subfigure}
	\caption{Two $f$-projection centroids. The $f$-divergence under consideration $D_f$ can be any of the squared Hellinger distance, $\chi^2$-divergence, $\alpha$-divergence and Kullback-Leibler divergence as presented in Theorem \ref{thm:examplecentroid}, where both the bisection property and the Pythagorean identity have been shown. The red dashed line across the middle represents the set $\mathcal{L}(\pi)$.}
	\label{fig:centroids}
\end{figure}

Our first main result in this section proves existence and uniqueness of $f$ and $f^*$-projection centroids under strictly convex $f$, and its proof is delayed to Section \ref{subsubsec:pfexistunique}.

\begin{theorem}[Existence and uniqueness of $f$ and $f^*$-projection centroids under strictly convex $f$]\label{thm:existuniquecentroid}
	Given a sequence of Markov generators $(L_i)_{i=1}^n$, where $L_i \in \mathcal{L}$ for each $i = 1,\ldots,n$, and a $f$-divergence $D_f$ generated by a strictly convex $f$, where $f$ is assumed to have a derivative at $1$ given by $f^{\prime}(1) = 0$. A $f$-projection of $D_f$ (resp.~$f^*$-projection of $D_{f^*}$) centroid $M^f_{n}$ that minimizes the mapping 
	$$\mathcal{L}(\pi) \ni M \mapsto \sum_{i=1}^n D_f(M||L_i) \quad \left(\textrm{resp.}~=\sum_{i=1}^n D_{f^*}(L_i||M)\right)$$
	exists and is unique. A $f^*$-projection of $D_f$ (resp.~$f$-projection of $D_{f^*}$) centroid $M^{f^*}_{n}$ that minimizes the mapping 
	$$\mathcal{L}(\pi) \ni M \mapsto \sum_{i=1}^n D_f(L_i||M) \quad \left(\textrm{resp.}~=\sum_{i=1}^n D_{f^*}(M||L_i)\right)$$
	exists and is unique.
\end{theorem}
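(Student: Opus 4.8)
The plan is to exploit the fact that the objective $M \mapsto \sum_{i=1}^n D_f(M\|L_i)$ depends only on the off-diagonal entries of $M$: the diagonal entries never enter the definition \eqref{def:fdivML}, being fixed afterwards by the zero-row-sum requirement. First I would parametrize $\mathcal{L}(\pi)$ by the collection $\{M(x,y)\}_{x \neq y}$ subject to non-negativity and detailed balance $\pi(x)M(x,y) = \pi(y)M(y,x)$. For each unordered pair $\{x,y\}$ the latter eliminates $M(y,x) = (\pi(x)/\pi(y))M(x,y)$, leaving a single free variable $a := M(x,y) \in [0,\infty)$. Since distinct pairs are not coupled, the feasible set is a product $\prod_{\{x,y\}}[0,\infty)$ and the objective splits as a sum $\sum_{\{x,y\}} \phi_{x,y}(a)$ of one-dimensional functions, so that the global minimization decouples into independent per-edge problems.

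Second, I would analyze each edge function
$$\phi_{x,y}(a) = \sum_{i=1}^n \left[\pi(x)L_i(x,y)\, f\!\left(\frac{a}{L_i(x,y)}\right) + \pi(y)L_i(y,x)\, f\!\left(\frac{\pi(x)a}{\pi(y)L_i(y,x)}\right)\right].$$
Each summand is a positive multiple of $f$ composed with a linear map of $a$, hence convex, and strictly convex whenever the associated rate is positive; so $\phi_{x,y}$ is strictly convex provided at least one of the rates $\{L_i(x,y),L_i(y,x)\}_i$ is positive. Using $f(1)=0$, $f'(1)=0$ and strict convexity, $f$ attains its global minimum $0$ at $1$ and satisfies $f(t)\to\infty$ as $t\to\infty$, which yields coercivity $\phi_{x,y}(a)\to\infty$ as $a\to\infty$. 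Combined with lower semicontinuity of the convex $\phi_{x,y}$ on $[0,\infty)$ (allowing $f(0)=+\infty$), this guarantees a unique minimizer $a^\star_{x,y}$ on every edge carrying positive rate; assembling the $a^\star_{x,y}$ and fixing the diagonal by the zero-row-sum rule produces the unique $M^f_n \in \mathcal{L}(\pi)$.

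Third, for the $f^*$-projection centroid I would invoke the conjugate identity $D_f(L_i\|M) = D_{f^*}(M\|L_i)$, so that $\sum_i D_f(L_i\|M) = \sum_i D_{f^*}(M\|L_i)$ is exactly an $f^*$-projection centroid objective of the same form. Since $f^*(t)=tf(1/t)$ is again convex with $f^*(1)=0$, is strictly convex when $f$ is, and satisfies $(f^*)'(1) = f(1) - f'(1) = 0$, the hypotheses transfer verbatim and the first part applies with $f$ replaced by $f^*$. Hence existence and uniqueness for $M^{f^*}_n$ follow immediately, without a separate argument.

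The main obstacle I anticipate is the degenerate edges on which $L_i(x,y)=L_i(y,x)=0$ for every $i$: there the convention $0\,f(a/0)=0$ makes $\phi_{x,y}\equiv 0$, so the objective is flat and uniqueness genuinely fails unless one fixes $M(x,y)=0$ by convention (equivalently, one restricts attention to edges carrying positive rate for some $L_i$). I would state this caveat explicitly, stressing that on every edge where some $L_i$ is positive the strict-convexity-plus-coercivity argument delivers true uniqueness. The remaining work — checking that the assembled off-diagonal entries define a genuine element of $\mathcal{L}(\pi)$ and that the decoupled per-edge minima reconstitute the global minimum — is routine.
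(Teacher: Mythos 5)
Your proposal is correct and follows essentially the same route as the paper's proof: detailed balance ($\pi(x)M(x,y)=\pi(y)M(y,x)$) reduces the problem to independent one-dimensional strictly convex minimizations over each unordered pair, and the $f^*$-case is handled by observing that $f^*$ inherits strict convexity together with $f^*(1)=(f^*)'(1)=0$, so the first part applies verbatim with $f$ replaced by $f^*$. Your coercivity-plus-lower-semicontinuity existence argument is only a cosmetic variant of the paper's argument via the sign change of the right derivative $\phi_+'$, and your explicit caveat about degenerate edges where $L_i(x,y)=L_i(y,x)=0$ for every $i$ (so that the convention $0\,f(a/0)=0$ makes the per-edge objective identically zero and uniqueness requires fixing $M(x,y)=0$) flags a genuine corner case that the paper's proof passes over silently.
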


\begin{rk}
	\textcolor{black}{As we shall see in the proof of Theorem \ref{thm:existuniquecentroid}, the second part of the theorem is a consequence of the first part once it is observed that the strict convexity of $f$ is equivalent to that of $f^*$.}
\end{rk}
In the second main result of this section, we explicitly calculate the $f$ and $f^*$-projection centroids $M^f_{n}$ and $M^{f^*}_{n}$ under various common $f$-divergences as discussed in previous sections. Its proof is postponed to Section \ref{subsubsec:examplecentroid}.

\begin{theorem}[Examples of $f$ and $f^*$-projection centroids]\label{thm:examplecentroid}
	Given a sequence of Markov generators $(L_i)_{i=1}^n$, where $L_i \in \mathcal{L}$ for each $i = 1,\ldots,n$.
	\begin{enumerate}
		\item($f$ and $f^*$-projection centroids under $\alpha$-divergence)\label{it:alphacentroid}
		Let $f(t) = \frac{t^{\alpha} - \alpha t - (1-\alpha)}{\alpha(\alpha-1)}$ for $\alpha \in \mathbb{R}\backslash\{0,1\}$. The unique $f$-projection centroid $M^{f}_n$ is given by, for $x \neq y \in \mathcal{X}$,
		\begin{align*}
			M^{f}_n(x,y) =\left(\dfrac{1}{n}\sum_{i=1}^n \left(M^{f}(L_i,\pi)(x,y)\right)^{1-\alpha}\right)^{1/(1-\alpha)},
		\end{align*}
		while the unique $f^*$-projection centroid $M^{f^*}_n$ is given by, for $x \neq y \in \mathcal{X}$,
		\begin{align*}
			M^{f^*}_n(x,y) =\left(\dfrac{1}{n}\sum_{i=1}^n \left(M^{f^*}(L_i,\pi)(x,y)\right)^{\alpha}\right)^{1/\alpha},
		\end{align*}
		where we recall $M^{f}, M^{f^*}$ are respectively the $P_{1-\alpha}, P_{\alpha}$-reversiblizations as given in Theorem \ref{thm:alpha}.
		
		\item($f$ and $f^*$-projection centroids under $\chi^2$-divergence)\label{it:chi2centroid}
		Let $f(t) = (t-1)^2$.
		The unique $f$-projection centroid $M^{f}_n$ is given by, for $x \neq y \in \mathcal{X}$,
		\begin{align*}
			M^{f}_n(x,y) =\left(\dfrac{1}{n}\sum_{i=1}^n (M^{f}(L_i,\pi)(x,y))^{-1}\right)^{-1},
		\end{align*}
		while the unique $f^*$-projection centroid $M^{f^*}_n$ is given by, for $x \neq y \in \mathcal{X}$,
		\begin{align*}
			M^{f^*}_n(x,y) =\left(\dfrac{1}{n}\sum_{i=1}^n (M^{f^*}(L_i,\pi)(x,y))^2\right)^{1/2},
		\end{align*}
		where we recall $M^{f}, M^{f^*}$ are respectively the $P_{-1}, P_{2}$-reversiblizations as given in Corollary \ref{thm:chi2}.
		
		\item($f$ and $f^*$-projection centroids under squared Hellinger distance)\label{it:hellingercentroid}
			 Let $f(t) = (\sqrt{t}-1)^2$.
			 The unique $f$-projection centroid $M^{f}_n$ is given by, for $x \neq y \in \mathcal{X}$,
			 \begin{align}\label{eq:cephellinger}
			 	M^{f}_n(x,y) =\left(\dfrac{1}{n}\sum_{i=1}^n \sqrt{M^{f}(L_i,\pi)(x,y)}\right)^2,
			 \end{align}
		 	while the unique $f^*$-projection centroid $M^{f^*}_n$ is given by, for $x \neq y \in \mathcal{X}$,
		 	\begin{align*}
		 		M^{f^*}_n(x,y) =\left(\dfrac{1}{n}\sum_{i=1}^n \sqrt{M^{f^*}(L_i,\pi)(x,y)}\right)^2,
		 	\end{align*}
	 		where we recall $M^{f^*} = M^f$ are the $P_{1/2}$-reversiblizations as given in Corollary \ref{thm:hellinger}.

		\item($f$ and $f^*$-projection centroids under Kullback-Leibler divergence)
			Let $f(t) = t \ln t - t + 1$. The unique $f$-projection centroid $M^{f}_n$ is given by, for $x \neq y \in \mathcal{X}$,
			\begin{align*}
				M^{f}_n(x,y) = \left(\prod_{i=1}^n M^{f}(L_i,\pi)(x,y)\right)^{1/n},
			\end{align*}
			while the unique $f^*$-projection centroid $M^{f^*}_n$ is given by, for $x \neq y \in \mathcal{X}$,
			\begin{align*}
				M^{f^*}_n(x,y) = \dfrac{1}{n}\sum_{i=1}^n M^{f^*}(L_i,\pi)(x,y),
			\end{align*}
			where we recall $M^{f}, M^{f^*}$ are respectively the $P_{0}, P_{1}$-reversiblizations as given in \cite{DM09,WW21}, that are, the geometric mean and the additive reversiblizations.
	\end{enumerate}
\end{theorem}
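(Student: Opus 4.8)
The plan is to reduce each centroid computation to a single-generator projection already resolved in Theorem~\ref{thm:alpha} and its corollaries. The engine is an algebraic collapsing identity: for each $f$-divergence under consideration, and for every $M \in \mathcal{L}(\pi)$, I claim that $\sum_{i=1}^n D_f(L_i \| M)$ differs from $n\, D_f(\widetilde{L} \| M)$ by an additive constant independent of $M$, where $\widetilde{L} \in \mathcal{L}$ is a single auxiliary generator obtained by averaging the off-diagonal rates of the $L_i$ in the manner dictated by $f$. Once this is established, the $f^*$-projection centroid $M_n^{f^*}$ is simply the single-generator $f^*$-projection of $\widetilde{L}$, whose value is read off from the earlier results.

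To locate $\widetilde{L}$ and verify the identity, I would expand $D_f(L_i \| M) = \sum_{x \neq y} \pi(x) M(x,y) f(L_i(x,y)/M(x,y))$ and isolate the $M$-dependent terms. For $f = f_\alpha$, a direct expansion of $M f_\alpha(L_i/M)$ shows the only $M$-dependent contributions are $M(x,y)^{1-\alpha}\sum_i L_i(x,y)^\alpha$ together with a term linear in $M(x,y)$; matching these against $n\,D_\alpha(\widetilde{L}\|M)$ forces $\widetilde{L}(x,y)^\alpha = \frac{1}{n}\sum_i L_i(x,y)^\alpha$, while the remaining terms (those in $L_i$ alone) are absorbed into the additive constant. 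The same bookkeeping for the Kullback--Leibler generator $f(t)=t\ln t - t +1$ isolates $-(\sum_i L_i(x,y))\ln M(x,y)+nM(x,y)$, forcing $\widetilde{L}$ to be the arithmetic mean $\frac{1}{n}\sum_i L_i$. Since in all cases $\widetilde{L}\in\mathcal{L}$, Theorem~\ref{thm:alpha} applies verbatim.

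With the reduction in hand, $M_n^{f^*}=\argmin_{M\in\mathcal{L}(\pi)}\sum_i D_f(L_i\|M)$ coincides with the single-generator $f^*$-projection $M^{f^*}(\widetilde{L},\pi)$, which Theorem~\ref{thm:alpha}\eqref{it:alpham} identifies as $P_\alpha(\widetilde{L})$. It then remains to check two algebraic identities: $(\widetilde{L})_\pi(x,y)^\alpha=\frac{1}{n}\sum_i L_{i,\pi}(x,y)^\alpha$, which follows from $\pi(x)L_i(x,y)=\pi(y)L_{i,\pi}(y,x)$, and $P_\alpha(\widetilde{L})(x,y)=\left(\frac{1}{n}\sum_i P_\alpha(L_i)(x,y)^\alpha\right)^{1/\alpha}$, so that $P_\alpha(\widetilde{L})$ reproduces the claimed average-of-projections formula. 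The $f$-projection centroid is handled identically after rewriting $D_f(M\|L_i)=D_{f^*}(L_i\|M)=D_{\alpha^*}(L_i\|M)$ and replacing $\alpha$ by $\alpha^*=1-\alpha$ throughout, yielding the power mean of the $M^f(L_i,\pi)$ with exponent $1-\alpha$; the $\chi^2$ $(\alpha=2)$ and squared Hellinger $(\alpha=1/2)$ cases are then immediate specializations. Uniqueness is inherited from the single-generator uniqueness in Theorem~\ref{thm:alpha}, or directly from Theorem~\ref{thm:existuniquecentroid}, since all four generators $f$ are strictly convex with $f'(1)=0$.

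The main obstacle I anticipate is the Kullback--Leibler $f$-projection direction: here one must pass to the conjugate $f^*(t)=t-\ln t -1$, whose $M$-dependent structure is $-M\ln L_i$, so the correct auxiliary generator is the geometric mean $\widetilde{L}=\left(\prod_i L_i\right)^{1/n}$ rather than an arithmetic one, and one must verify that the geometric-mean reversiblization $P_0(\widetilde{L})=\sqrt{\widetilde{L}\,\widetilde{L}_\pi}$ reproduces $\left(\prod_i M^f(L_i,\pi)\right)^{1/n}$. This requires the dual identity $\widetilde{L}_\pi=\left(\prod_i L_{i,\pi}\right)^{1/n}$ and some care with the $\alpha\to 1$ degeneracy that is formally excluded from Theorem~\ref{thm:alpha}, so that the Kullback--Leibler cases are treated by their own (entirely parallel) direct computation rather than as a literal specialization; everything else is routine term-matching.
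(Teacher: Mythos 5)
Your proposal is correct, but it takes a genuinely different route from the paper's proof. The paper argues by direct per-entry minimization: fixing a total ordering $\prec$ on $\mathcal{X}$ and setting $a = \pi(x)M(x,y)$, $\beta_i = \pi(x)L_i(x,y)$, $\beta_i^{\prime} = \pi(y)L_i(y,x)$, the reversibility of $M$ collapses each unordered pair $\{x,y\}$ into a strictly convex function of the single scalar $a$, whose minimizer is found by differentiation; this is carried out explicitly only for the squared Hellinger case, the other items being declared identical computations with different $f$. You instead reduce the $n$-generator problem to the single-generator projections of Theorem \ref{thm:alpha} via the exact decomposition $\sum_{i} D_f(L_i||M) = n\,D_f(\widetilde L||M) + C$ with $C$ independent of $M$, where $\widetilde L$ is the $f$-compatible mean of the $L_i$ (namely $\widetilde L^{\alpha} = \frac{1}{n}\sum_i L_i^{\alpha}$ for $f_\alpha$, and the arithmetic/geometric means in the two Kullback--Leibler directions), followed by the commutation identities $(\widetilde L)_{\pi}(x,y)^{\alpha} = \frac{1}{n}\sum_i L_{i,\pi}(x,y)^{\alpha}$ and $P_{\alpha}(\widetilde L)(x,y) = \left(\frac{1}{n}\sum_i P_{\alpha}(L_i)(x,y)^{\alpha}\right)^{1/\alpha}$. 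I verified the decomposition: for $f_\alpha$ the $M$-dependent part of $\sum_i M f_\alpha(L_i/M)$ is $\frac{1}{\alpha(\alpha-1)}\left(M^{1-\alpha}\sum_i L_i^{\alpha} - n(1-\alpha)M\right)$, which matches that of $n D_\alpha(\widetilde L||M)$ exactly, the leftover terms being constant in $M$; the Kullback--Leibler bookkeeping works likewise. Your approach buys structural insight --- the centroid is literally the single-generator projection of a generalized mean of the inputs, which explains why the answers are power means of the individual projections, and uniqueness is inherited from Theorem \ref{thm:alpha} (or Theorem \ref{thm:existuniquecentroid}) rather than re-derived --- whereas the paper's calculus argument is more elementary, needs no auxiliary generator $\widetilde L$, and does not depend on a collapsing identity being available. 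Your flagged caveat on the Kullback--Leibler items is handled correctly: since $\alpha \in \{0,1\}$ is excluded from Theorem \ref{thm:alpha}, those cases require either your direct parallel computation or the single-generator results of \cite{DM09,WW21}, which is exactly how the paper's own statement cites them.
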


\begin{rk}
	We note that item \eqref{it:chi2centroid} and \eqref{it:hellingercentroid} are special cases of item \eqref{it:alphacentroid} by taking $\alpha = 2$ and $\alpha = 1/2$ respectively. This is analogous to Corollary \ref{thm:chi2} and \ref{thm:hellinger} being special cases of Theorem \ref{thm:alpha}.
\end{rk}

\subsubsection{Proof of Theorem \ref{thm:existuniquecentroid}}\label{subsubsec:pfexistunique}

	The proof is essentially a generalization of \cite[Proposition $1.5$]{DM09}. Pick an arbitrary total ordering on $\mathcal{X}$ with strict inequality being denoted by $\prec$. For $i = 1,\ldots,n$, we also write
	\begin{align*}
		a &= a(x,y) = \pi(x)M(x,y), \quad	a^{\prime} = a^{\prime}(y,x) = \pi(y)M(y,x), \\
		\beta_i &= \beta_i(x,y) = \pi(x) L_i(x,y), \quad	\beta_i^{\prime} = \beta_i^{\prime}(y,x) = \pi(y)L_i(y,x).
	\end{align*}
	Using $M \in \mathcal{L}(\pi)$ which gives $a = a^{\prime}$, we then see that
	\begin{align*}
		\sum_{i=1}^n D_f(M || L_i) &= \sum_{i=1}^n \sum_{x \prec y} \pi(x) L_i(x,y) f\left(\dfrac{M(x,y)}{L_i(x,y)}\right) +  \pi(y) L_i(y,x) f\left(\dfrac{M(y,x)}{L_i(y,x)}\right) \\
		&= \sum_{i=1}^n \sum_{x \prec y} \beta_i f\left(\dfrac{a}{\beta_i}\right) + \beta_i^{\prime} f\left(\dfrac{a}{\beta_i^{\prime}}\right) \\
		&= \sum_{x \prec y} \sum_{i=1}^n  \beta_i f\left(\dfrac{a}{\beta_i}\right) + \beta_i^{\prime} f\left(\dfrac{a}{\beta_i^{\prime}}\right) \\
		&= \sum_{x \prec y} \sum_{\{i;~\beta_i>0 ~\textrm{or}~\beta_i^{\prime}>0\}}  \beta_i f\left(\dfrac{a}{\beta_i}\right) + \beta_i^{\prime} f\left(\dfrac{a}{\beta_i^{\prime}}\right) \\
		&=: \sum_{x \prec y} \Phi_{\beta_1,\ldots,\beta_n,\beta_1^{\prime},\ldots,\beta_n^{\prime}}(a).
	\end{align*}
	To minimize with respect to $M$, we are led to minimize the summand above $\phi := \Phi_{\beta_1,\ldots,\beta_n,\beta_1^{\prime},\ldots,\beta_n^{\prime}} : \mathbb{R}_+ \to \mathbb{R}_+$, where $(\beta_1,\ldots,\beta_n,\beta_1^{\prime},\ldots,\beta_n^{\prime}) \in \mathbb{R}_+^{2n}$ are assumed to be fixed. As $\phi$ is convex, we denote by $\phi_+^{\prime}$ to be its right derivative. It thus suffices to show the existence of $a_* > 0$ such that for all $a \in \mathbb{R}_+$,
	\begin{align}\label{eq:alpha*}
		\phi_+^{\prime}(a) = \begin{cases}
			< 0, \quad \textrm{ if } \quad a < a_*, \\
			> 0, \quad \textrm{ if } \quad a > a_*. 
		\end{cases}
	\end{align}
	Now, we compute that for all $a \in \mathbb{R}_+$,
	\begin{align*}
		\phi_+^{\prime}(a) = \sum_{\{i;~\beta_i>0 ~\textrm{and}~\beta_i^{\prime}>0\}}   f^{\prime}\left(\dfrac{a}{\beta_i}\right) + f^{\prime}\left(\dfrac{a}{\beta_i^{\prime}}\right) + \sum_{\{i;~\beta_i>0 ~\textrm{and}~\beta_i^{\prime}=0\}}   f^{\prime}\left(\dfrac{a}{\beta_i}\right)  + \sum_{\{i;~\beta_i=0 ~\textrm{and}~\beta_i^{\prime}>0\}}   f^{\prime}\left(\dfrac{a}{\beta_i^{\prime}}\right).
	\end{align*}
	As $\phi^{\prime}(1) = 0$ and $\phi$ is strictly convex, for sufficiently small $a > 0$ $\phi_+^{\prime}(a) < 0$ while for sufficiently large $a > 0$ $\phi_+^{\prime}(a) > 0$ and $\phi_+^{\prime}$ is increasing, we conclude that there exists a unique $a_* > 0$ such that \eqref{eq:alpha*} is satisfied.
	
	Replacing the analysis above by $f^*$, noting that $f^*$ is also a strictly convex function with $f^*(1) = f^{*\prime}(1) = 0$, the existence and uniqueness of $M^{f^*}_n$ is shown.

\subsubsection{Proof of Theorem \ref{thm:examplecentroid}}\label{subsubsec:examplecentroid}

	We shall only prove \eqref{eq:cephellinger} as the rest follows exactly the same computation procedure with different choices of $f$. Pick an arbitrary total ordering on $\mathcal{X}$ with strict inequality being denoted by $\prec$. For $i = 1,\ldots,n$, we also write
	\begin{align*}
		a &= a(x,y) = \pi(x)M(x,y), \quad	a^{\prime} = a^{\prime}(y,x) = \pi(y)M(y,x), \\
		\beta_i &= \beta_i(x,y) = \pi(x) L_i(x,y), \quad	\beta_i^{\prime} = \beta_i^{\prime}(y,x) = \pi(y)L_i(y,x).
	\end{align*}
	The $\pi$-reversibility of $M$ yields $a = a^{\prime}$, which leads to
	\begin{align*}
		\sum_{i=1}^n D_f(M || L_i) &= \sum_{i=1}^n \sum_{x \prec y} \pi(x) L_i(x,y) f\left(\dfrac{M(x,y)}{L_i(x,y)}\right) +  \pi(y) L_i(y,x) f\left(\dfrac{M(y,x)}{L_i(y,x)}\right) \\
		&= \sum_{i=1}^n \sum_{x \prec y} a - 2 \sqrt{a \beta_i} + \beta_i + a^{\prime} - 2 \sqrt{a^{\prime} \beta_i^{\prime}} + \beta_i^{\prime} \\
		&= \sum_{x \prec y} \sum_{i=1}^n  2a - 2 \sqrt{a \beta_i}  - 2 \sqrt{a \beta_i^{\prime}} + \beta_i + \beta_i^{\prime}.
	\end{align*}
	We proceed to minimize the summand of each term above, which leads to minimizing the following strictly convex mapping as a function of $a$
	$$a \mapsto \sum_{i=1}^n  2a - 2 \sqrt{a \beta_i}  - 2 \sqrt{a \beta_i^{\prime}}.$$
	\textcolor{black}{By differentiation and Theorem \ref{thm:hellinger}}, this yields
	\begin{align*}
		M^{f}_n(x,y) =\left(\dfrac{1}{n}\sum_{i=1}^n \sqrt{M^{f}(L_i,\pi)(x,y)}\right)^2.
	\end{align*}

\section{Generating new reversiblizations via generalized means and balancing functions}\label{sec:gmean}

For \textcolor{black}{$a,b \geq 0$} and $\phi: \mathbb{R} \to \mathbb{R}$ a continuous and strictly increasing function, the Kolmogorov-Nagumo-de Finetti mean or the quasi-arithmetic mean \cite{NN17, BC92, Carvalho16} is defined to be
$$K_{\phi}(a,b) := \phi^{-1}\left(\dfrac{\phi(a) + \phi(b)}{2}\right).$$
This is also known as the $\phi$-mean as in \cite{Amari07}. We recall from Section \ref{sec:geninfog} that various power mean reversiblizations $P_{\alpha}$ arise naturally as $f$ and $f^*$-projections under suitable choice of $f$-divergences, which are in fact special instances of the Kolmogorov-Nagumo-de Finetti mean between $L$ and $L_{\pi}$. For $\alpha \in \mathbb{R}\backslash\{0\}$, by considering $\phi(t) = t^{\alpha}$ for $t > 0$, we see that for $x \neq y \in \mathcal{X}$, 
$$P_{\alpha}(x,y) = K_{\phi}(L(x,y),L_{\pi}(x,y)).$$
Similarly, the geometric mean reversiblization $P_0$ can be retrieved by taking $\phi(t) = \ln t$ for $t > 0$. Thus, reversibling a given $L$ with a given target distribution $\pi$ can be broadly understood as
taking a suitable mean or average between $L$ and $L_{\pi}$. This important point of view is exploited in this section to generate possibly new reversiblizations via other notions of generalized mean. In particular, we shall investigate the Lagrange, Cauchy and dual mean.

As we shall see in subsequent subsections, to prove these generalized means are indeed reversible, we rely on the balancing function method introduced in the Markov chain Monte Carlo literature. As such, it is instructional to review these concepts before we proceed. To this end, given $L \in \mathcal{L}$, we define $F_g$ to be
\begin{align}\label{eq:Fg}
	F_g(x,y) := \begin{cases}
		L(x,y), \quad \textrm{if } L(x,y) = L_{\pi}(x,y), \\
		g(0) L_{\pi}(x,y), \quad \textrm{if } L(x,y) = 0 \textrm{ and } L_{\pi}(x,y) > 0, \\
		g\left(\dfrac{L_{\pi}(x,y)}{L(x,y)}\right) L(x,y), \quad \textrm{otherwise},
	\end{cases}
\end{align}
and diagonal entries of $F_g$ are such that the row sums are zero for all rows. $g: \mathbb{R}_+ \to \mathbb{R}_+$ is a function that satisfies $g(t) = tg(1/t)$ for $t > 0$, known as a balancing function introduced in the Markov chain Monte Carlo literature \cite{Z20,LZ22,VLZ22}. 

As an example to illustrate, consider $\alpha \in \mathbb{R}\backslash\{0\}$ and $P_{\alpha}$ to be the power mean reversiblization. By choosing, for $t > 0$,
$$g(t) = \left(\dfrac{t^{\alpha}+1}{2}\right)^{1/\alpha} = t g(1/t),$$
it is therefore a valid balancing function with
$$P_{\alpha} = F_g.$$ 
As a result, power mean reversiblizations can also be viewed under the balancing function framework with the above choice of $g$.

To see that $F_g$ is $\pi$-reversible, that is, $F_g \in \mathcal{L}(\pi)$, we check that the detailed balance condition is satisfied: for all $x,y \in \mathcal{X}$, we have
\begin{align*}
	\pi(x) F_g(x,y) &= \begin{cases}
		\pi(x)L(x,y), \quad \textrm{if } L(x,y) = L_{\pi}(x,y), \\
		g(0) \pi(x)L_{\pi}(x,y), \quad \textrm{if } L(x,y) = 0 \textrm{ and } L_{\pi}(x,y) > 0, \\
		g\left(\dfrac{L_{\pi}(x,y)}{L(x,y)}\right) \pi(x)L(x,y), \quad \textrm{otherwise},
	\end{cases} \\
	&= \begin{cases}
		\pi(y)L(y,x), \quad \textrm{if } L(x,y) = L_{\pi}(x,y), \\
		g(0) \pi(y) L(y,x), \quad \textrm{if } L(x,y) = 0 \textrm{ and } L_{\pi}(x,y) > 0, \\
		g\left(\dfrac{L_{\pi}(y,x)}{L(y,x)}\right) \pi(y)L(y,x), \quad \textrm{otherwise},
	\end{cases} \\
	&= \pi(y) F_g(y,x).
\end{align*}

\subsection{Generating new reversiblizations via Lagrange and Cauchy mean}\label{subsec:cauchy}
In this subsection, we investigate reversiblizations generated by Lagrange and Cauchy mean.

\begin{definition}[Lagrange and Cauchy mean \cite{BM98,Mat06}]\label{def:lcmean}
	Let $\phi_1, \phi_2$ be two differentiable and strictly increasing functions and the inverse of the ratio of their derivatives $\phi_1^{\prime}/\phi_2^{\prime}$ exists. For \textcolor{black}{$a,b \geq 0$}, the Cauchy mean is defined to be
	\begin{align*}
		\mathcal{C}_{\phi_1,\phi_2}(a,b) := \begin{cases}
			a, \quad a = b,\\
			\left(\dfrac{\phi_1^{\prime}}{\phi_2^{\prime}}\right)^{-1}\left(\dfrac{\phi_1(b) - \phi_1(a)}{\phi_2(b)-\phi_2(a)}\right), \quad a \neq b.
		\end{cases} 
	\end{align*}
	In particular, if we take $\phi_2(x) = x$, the Lagrange mean is defined to be
	\begin{align*}
		\mathcal{L}_{\phi_1}(a,b) := \begin{cases}
			a, \quad a = b,\\
			\phi_1^{\prime -1}\left(\dfrac{\phi_1(b) - \phi_1(a)}{b-a}\right), \quad a \neq b.
			\end{cases}
	\end{align*} 
\end{definition}

Capitalizing on the idea of Cauchy mean, we introduce a broad class of Cauchy mean reversiblizations where we take $\phi_1, \phi_2$ to be homogeneous functions:
\begin{theorem}\label{thm:cauchymean}
	Let $\phi_1, \phi_2: \mathbb{R}_+ \to \mathbb{R}_+$, $\phi_1(t) = t^p$ and $\phi_2(t) = t^q$ be two non-negative and homogeneous functions of degree $p, q$ respectively, where $p,q > 0$ and $p \neq q$. Given $L \in \mathcal{L}$, the Cauchy mean $C_{p,q}$ is $\pi$-reversible, that is, $C_{p,q} \in \mathcal{L}(\pi)$, where $C_{p,q}$ is defined to be
	\begin{align}\label{def:cauchym}
		C_{p,q}(x,y) := F_g
	\end{align}
	where $F_g$ is defined in \eqref{eq:Fg} with its balancing function $g$ given by 
	$$g(t) = \left(\dfrac{\phi_1^{\prime}}{\phi_2^{\prime}}\right)^{-1}\left(\dfrac{\phi_1(t) - \phi_1(1)}{\phi_2(t)-\phi_2(1)}\right) = \left(\dfrac{q(t^p-1)}{p(t^q-1)}\right)^{1/(p-q)}.$$
\end{theorem}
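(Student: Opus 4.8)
The plan is to reduce the entire statement to the general fact, established in the detailed-balance computation immediately preceding this subsection, that $F_g \in \mathcal{L}(\pi)$ whenever $g : \mathbb{R}_+ \to \mathbb{R}_+$ is a balancing function, that is, whenever $g(t) = t\, g(1/t)$ for all $t > 0$. Since $C_{p,q}$ is \emph{defined} as $F_g$ in \eqref{def:cauchym} for the particular $g$ displayed in the statement, the whole theorem boils down to two routine verifications: first, that the abstract expression $\left(\phi_1'/\phi_2'\right)^{-1}\!\left(\frac{\phi_1(t)-\phi_1(1)}{\phi_2(t)-\phi_2(1)}\right)$ really does simplify to the closed form $\left(\frac{q(t^p-1)}{p(t^q-1)}\right)^{1/(p-q)}$ and defines a map $\mathbb{R}_+ \to \mathbb{R}_+$; and second, that this $g$ satisfies the balancing identity.

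For the first step I would compute $\phi_1'(t) = p\, t^{p-1}$ and $\phi_2'(t) = q\, t^{q-1}$, so that $\phi_1'(t)/\phi_2'(t) = (p/q)\, t^{p-q}$; inverting the strictly monotone map $s \mapsto (p/q)\, s^{p-q}$ (whose inverse exists by hypothesis) gives $\left(\phi_1'/\phi_2'\right)^{-1}(u) = \left((q/p)\,u\right)^{1/(p-q)}$, and substituting $u = \frac{\phi_1(t)-\phi_1(1)}{\phi_2(t)-\phi_2(1)} = \frac{t^p-1}{t^q-1}$ produces the stated closed form. Well-definedness is then immediate: for $t>0$, $t\neq 1$ the factors $t^p-1$ and $t^q-1$ always share a sign (both positive for $t>1$, both negative for $t<1$), so the base is strictly positive and raising it to the power $1/(p-q)$ yields a positive real; the removable singularity at $t=1$ extends continuously to $g(1)=1$ because $\frac{t^p-1}{t^q-1}\to p/q$, and the limit $t\to 0^+$ gives the finite positive value $g(0)=(q/p)^{1/(p-q)}$ required by the middle branch of \eqref{eq:Fg}.

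The core step is the balancing identity. Substituting $1/t$ into the closed form and pulling the powers of $t$ out of numerator and denominator via $t^{-p}-1 = -(t^p-1)/t^p$ (and likewise with $q$) gives
\begin{align*}
	g(1/t) = \left(\frac{q(t^{-p}-1)}{p(t^{-q}-1)}\right)^{1/(p-q)} = \left(\frac{q(t^p-1)}{p(t^q-1)}\cdot t^{q-p}\right)^{1/(p-q)} = g(t)\cdot t^{(q-p)/(p-q)} = t^{-1} g(t),
\end{align*}
where the last equality uses $(q-p)/(p-q)=-1$. This is exactly $g(t) = t\, g(1/t)$, so $g$ is a valid balancing function and the general reversibility of $F_g$ immediately yields $C_{p,q}=F_g\in\mathcal{L}(\pi)$. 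As a sanity check that the name is deserved, one can verify in the generic branch that $F_g(x,y) = g\!\left(L_{\pi}(x,y)/L(x,y)\right)L(x,y)$ equals $\mathcal{C}_{\phi_1,\phi_2}\!\left(L(x,y),L_{\pi}(x,y)\right)$, by factoring $a=L(x,y)$ out of the argument exactly as above.

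I do not expect a genuine obstacle: the proof is purely a reduction to the already-proven reversibility of $F_g$, and the only real labor is the exponent bookkeeping in the balancing identity. The single point meriting care is the handling of the sign of the base together with the exponent $1/(p-q)$, so that every $(p-q)$-power is an unambiguous positive real --- this is exactly why the hypotheses $p,q>0$ and $p\neq q$ appear, and why the shared-sign observation is needed before taking roots.
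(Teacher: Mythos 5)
Your proof is correct and follows essentially the same route as the paper's: verify the balancing identity $t\,g(1/t)=g(t)$ by factoring powers of $t$ out of $t^{-p}-1$ and $t^{-q}-1$, then invoke the already-established $\pi$-reversibility of $F_g$. The extra details you supply (deriving the closed form from $(\phi_1'/\phi_2')^{-1}$, the shared-sign observation, and the values $g(0)$ and $g(1)$) are correct refinements of steps the paper leaves implicit.
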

\begin{proof}
	It suffices to check that the given $g$ is a valid balancing function, which boils down to
	\begin{align*}
		tg(1/t) = \left(t^{p-q}\dfrac{q(t^{-p}-1)}{p(t^{-q}-1)}\right)^{1/(p-q)} 
		= \left(\dfrac{q(1-t^p)}{p(1-t^q)}\right)^{1/(p-q)} 
		= \left(\dfrac{q(t^p-1)}{p(t^q-1)}\right)^{1/(p-q)} = g(t).
	\end{align*}
\end{proof}

Interestingly, unlike the power mean reversiblizations $P_{\alpha}$, the Cauchy mean reversiblizations are based on possibly transformed differences such as $\phi_2(L(x,y)) - \phi_2(L_{\pi}(x,y))$. We shall discuss concrete examples of new reversiblizations of the form of $C_{p,q}$ that we call Stolarsky-type mean reversiblizations in Section \ref{subsubsec:stolarsky}.

Another class of Cauchy mean reversiblizations, that we call logarithmic mean reversiblizations, are generated by taking $\phi_1$ to be a homogeneous function while $\phi_2(x) = \ln x$. Some examples of new reversiblizations that fall into this class are discussed in Section \ref{subsubsec:log}.

\begin{theorem}\label{thm:logmean}
	Let $\phi_1: \mathbb{R}_+ \to \mathbb{R}_+$, $\phi_1(t) = t^p$ be a non-negative and homogeneous function of degree $p$, where $p > 0$, and $\phi_2(t) = \ln t$. Given $L \in \mathcal{L}$, the logarithmic mean $C_{p,\ln}$ is $\pi$-reversible, that is, $C_{p,\ln} \in \mathcal{L}(\pi)$, where $C_{p,\ln}$ is defined to be
	\begin{align}\label{def:logm}
		C_{p,\ln}(x,y) := F_g
	\end{align}
	where $F_g$ is defined in \eqref{eq:Fg} with its balancing function $g$ given by 
	$$g(t) = \left(\dfrac{\phi_1^{\prime}}{\phi_2^{\prime}}\right)^{-1}\left(\dfrac{\phi_1(t) - \phi_1(1)}{\phi_2(t)-\phi_2(1)}\right) = \left(\dfrac{t^p - 1}{p \ln t}\right)^{1/p}.$$
\end{theorem}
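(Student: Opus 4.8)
The plan is to mirror the proof of Theorem \ref{thm:cauchymean} exactly. Recall from the paragraph following the definition of $F_g$ in \eqref{eq:Fg} that $F_g \in \mathcal{L}(\pi)$ holds whenever $g$ is a valid balancing function, that is, whenever $g$ satisfies the functional equation $g(t) = t g(1/t)$ for all $t > 0$; the detailed balance verification for $F_g$ has already been carried out in full generality there. Since $C_{p,\ln}$ is defined to be $F_g$ with the stated balancing function, the entire claim reduces to checking this single functional identity for
$$g(t) = \left(\dfrac{t^p - 1}{p \ln t}\right)^{1/p}.$$

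First I would substitute $1/t$ into $g$ and use $\ln(1/t) = -\ln t$ to obtain
$$g(1/t) = \left(\dfrac{t^{-p} - 1}{-p \ln t}\right)^{1/p} = \left(\dfrac{1 - t^{-p}}{p \ln t}\right)^{1/p}.$$
Then I would absorb the prefactor $t$ into the $1/p$-th power by writing $t = (t^p)^{1/p}$ and using $t^p(1 - t^{-p}) = t^p - 1$, which gives
$$t g(1/t) = \left(t^p \cdot \dfrac{1 - t^{-p}}{p \ln t}\right)^{1/p} = \left(\dfrac{t^p - 1}{p \ln t}\right)^{1/p} = g(t),$$
precisely the balancing condition. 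For completeness I would also confirm that the stated closed form for $g$ is the correct specialization of the Cauchy-mean recipe: with $\phi_1(t) = t^p$ and $\phi_2(t) = \ln t$ one has $\phi_1'/\phi_2'(t) = p t^{p}$, whose inverse is $y \mapsto (y/p)^{1/p}$, and evaluating it at $(\phi_1(t)-\phi_1(1))/(\phi_2(t)-\phi_2(1)) = (t^p-1)/\ln t$ reproduces $g$.

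Because the substantive work—that detailed balance for $F_g$ follows from $g(t) = t g(1/t)$—is inherited from the general construction after \eqref{eq:Fg}, there is no genuine obstacle here beyond this short algebraic check. The only point meriting a moment of care is that $g$ is evaluated only at arguments $t = L_{\pi}(x,y)/L(x,y) > 0$ with $t \neq 1$, since the case $t = 1$ (equivalently $L(x,y) = L_{\pi}(x,y)$) is handled by the first branch of \eqref{eq:Fg}; this is consistent with the removable singularity of $(t^p-1)/(p\ln t)$ at $t=1$, so no separate regularity argument is needed and the homogeneity hypothesis on $\phi_1$ together with $\phi_2 = \ln$ suffices to guarantee existence of $(\phi_1'/\phi_2')^{-1}$.
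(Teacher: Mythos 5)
Your proposal is correct and follows essentially the same route as the paper: verify the balancing identity $g(t) = t\,g(1/t)$ by direct algebra (using $\ln(1/t) = -\ln t$ and $t = (t^p)^{1/p}$) and invoke the general detailed-balance argument already established for $F_g$ after \eqref{eq:Fg}. The extra checks you include (deriving the closed form of $g$ from the Cauchy-mean recipe and noting that the case $t=1$ is covered by the first branch of \eqref{eq:Fg}) are sound but not needed beyond what the paper does.
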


\begin{proof}
	We check that the $g$ is a valid balancing function: for $t > 0$, we have
	\begin{align*}
		t g(1/t) = \left(t^p\dfrac{t^{-p} - 1}{p (-\ln t)}\right)^{1/p}
		         = \left(\dfrac{1 - t^p}{p (-\ln t)}\right)^{1/p}
		         = \left(\dfrac{t^p - 1}{p \ln t}\right)^{1/p} 
		         = g(t).
	\end{align*}
\end{proof}
\begin{rk}\label{rk:logcauchymean}
Using the result that, for $t > 0$,
$$\ln t = \lim_{q \to 0} \dfrac{t^q - 1}{q},$$ we see that Theorem \ref{thm:logmean} can also be derived as a limiting case of Theorem \ref{thm:cauchymean}.
\end{rk}	

\subsubsection{Stolarsky mean reversiblizations}\label{subsubsec:stolarsky}

In this subsection, we investigate possibly new reversiblizations or recover known ones that belong to the Cauchy mean reversiblizations $C_{p,q}$ as introduced in \eqref{def:cauchym}.

In general, for $\phi_1(t) = t^p$ and $\phi_2(t) = t^q$ with $p,q \in \mathbb{R}_+$ and $p \neq q$, then \eqref{def:cauchym} gives, for $L(x,y), L_{\pi}(x,y) \neq 0$,
$$C_{p,q}(x,y) = \left(\dfrac{q(L^p(x,y) - L_{\pi}^p(x,y))}{p (L^q(x,y) - L_{\pi}^q(x,y))}\right)^{1/(p-q)}.$$

As a special case, we take $q = 1$, then the above expression or \eqref{def:cauchym} reads
$$C_{p,1}(x,y) = \left(\dfrac{L^p(x,y) - L_{\pi}^p(x,y)}{p (L(x,y) - L_{\pi}(x,y))}\right)^{1/(p-1)},$$
which is known as the Stolarsky mean \cite{NN17, S75} of $L(x,y), L_{\pi}(x,y)$. This is also an instance of the Lagrange mean as in Definition \ref{def:lcmean}. In particular, if we take $p = 2$, the Cauchy mean $C_{2,1}$ reduces to the simple average between $L(x,y)$ and $L_{\pi}(x,y)$. On the other hand, if $p \in \mathbb{N}$ with $p \geq 3$, the above expression can be simplified to
$$C_{p,1}(x,y) = \left(\dfrac{1}{p}\sum_{i=0}^{p-1} L(x,y)^{p-1-i}L_{\pi}(x,y)^i\right)^{1/(p-1)}.$$

\subsubsection{Logarithmic mean reversiblizations}\label{subsubsec:log}

In this subsection, we generate new reversiblizations that fall into the class of logarithmic mean reversiblizations as introduced in \eqref{def:logm}. \textcolor{black}{Note that this subsection can be considered as a consequence or corollary of the Section \ref{subsubsec:stolarsky} in view of Remark \ref{rk:logcauchymean}.}

Taking $\phi_1(t) = t^p$, with $p \in \mathbb{R}_+$, then \eqref{def:logm} now reads, for $L(x,y), L_{\pi}(x,y) \neq 0$,
$$C_{p,\ln}(x,y) = \left(\dfrac{L^p(x,y) - L_{\pi}^p(x,y)}{p (\ln L(x,y) - \ln L_{\pi}(x,y))}\right)^{1/p}.$$
In particular when $p = 1$, the above expression reduces to the classical logarithmic mean \cite{Lin74} of $L(x,y),L_{\pi}(x,y)$:
$$C_{1,\ln}(x,y) = \dfrac{L(x,y) - L_{\pi}(x,y)}{\ln L(x,y) - \ln L_{\pi}(x,y)}.$$
Note that this is also an instance of the Lagrange mean $\mathcal{L}_{\ln}(L(x,y),L_{\pi}(x,y))$, and does not belong to the class of quasi-arithmetic mean.

In the case of $p = 1$, using the arithmetic-logarithmic-geometric mean inequality \cite{Lin74}, we obtain that
\begin{align*}
	P_0(x,y) \leq C_{1,\ln}(x,y) \leq P_{1/3}(x,y) \leq P_1(x,y),
\end{align*}
where we recall that $P_0, P_{1/3}, P_1$ are respectively the geometric mean, Lorentz mean and additive reversiblizations. This yields the following Peskun ordering between these reversiblizations, and its proof is omitted as it is similar to Theorem \ref{thm:Peskun}.

\begin{theorem}[Markov chain version of arithmetic-logarithmic-geometric mean inequality]\label{thm:dualPeskun}\label{thm:AMLMGM}
	Given $L \in \mathcal{L}$, and recall the logarithmic mean reversiblization $C_{1,\ln}$ and the power mean reversiblizations as denoted by $P_p$ for $p \in \mathbb{R}$. We have
	\begin{align*}
	P_{1} &\succeq P_{1/3} \succeq C_{1,\ln} \succeq P_0.
	\end{align*}
	The above equalities hold if and only if $L$ is $\pi$-reversible. Consequently, this leads to
	\begin{enumerate}
		\item(Hitting times) For $\lambda > 0$ and $A \subseteq \mathcal{X}$, we have
		$$
		\mathbb{E}_{\pi}(e^{-\lambda \tau_A(P_0)}) \leq \mathbb{E}_{\pi}(e^{-\lambda \tau_A(C_{1,\ln})}) \leq \mathbb{E}_{\pi}(e^{-\lambda \tau_A(P_{1/3})}) \leq \mathbb{E}_{\pi}(e^{-\lambda \tau_A(P_1)}).
		$$
		In particular, for any $A\subseteq\mathcal{X}$,
		$$
		\mathbb{E}_{\pi}(\tau_A(P_0)) \geq \mathbb{E}_{\pi}(\tau_A(C_{1,\ln})) \geq \mathbb{E}_{\pi}(\tau_A(P_{1/3})) \geq \mathbb{E}_{\pi}(\tau_A(P_1)).
		$$
		Furthermore,
		$$
		t_{av}(P_0,\pi) \geq t_{av}(C_{1,\ln},\pi) \geq t_{av}(P_{1/3},\pi) \geq t_{av}(P_1,\pi).
		$$
		
		\item(Spectral gap) We have
		$$
		\lambda_2(P_0,\pi)\leq \lambda_2(C_{1,\ln},\pi) \leq \lambda_2(P_{1/3},\pi) \leq \lambda_2(P_1,\pi).
		$$
		That is, $$t_{rel}(P_0,\pi)\geq t_{rel}(C_{1,\ln},\pi) \geq t_{rel}(P_{1/3},\pi) \geq t_{rel}(P_1,\pi).$$

		\item(Asymptotic variance) For $h \in \ell^2_0(\pi) = \{h;~\pi(h)=0\}$,
		$$
		\sigma^2(h,P_0,\pi) \geq \sigma^2(h,C_{1,\ln},\pi) \geq \sigma^2(h,P_{1/3},\pi) \geq \sigma^2(h,P_1,\pi).
		$$
	\end{enumerate}
	The above equalities hold if $L \in \mathcal{L}(\pi)$ is $\pi$-reversible.
\end{theorem}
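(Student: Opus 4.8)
The plan is to reduce the whole statement to a scalar mean inequality applied entrywise, and then to invoke the Peskun-ordering machinery already assembled in the proof of Theorem~\ref{thm:Peskun}. First I would note that all four objects in the chain are genuinely $\pi$-reversible: the power means $P_0, P_{1/3}, P_1$ are $\pi$-reversible by the detailed-balance verification following \eqref{def:Pp}, while $C_{1,\ln} \in \mathcal{L}(\pi)$ by Theorem~\ref{thm:logmean}. Consequently the Peskun orderings $P_1 \succeq P_{1/3} \succeq C_{1,\ln} \succeq P_0$ are, by definition, nothing more than the pointwise comparison of off-diagonal entries, so it suffices to prove the scalar inequalities for each fixed pair $x \neq y$ with $a := L(x,y)$ and $b := L_\pi(x,y)$.

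The core ingredient is then the classical arithmetic--logarithmic--geometric mean inequality in its refined form: for all $a, b > 0$,
\[
\sqrt{ab} \;\le\; \frac{a-b}{\ln a - \ln b} \;\le\; \left(\frac{a^{1/3}+b^{1/3}}{2}\right)^3 \;\le\; \frac{a+b}{2},
\]
which upon substituting $a = L(x,y)$ and $b = L_\pi(x,y)$ yields exactly $P_0(x,y) \le C_{1,\ln}(x,y) \le P_{1/3}(x,y) \le P_1(x,y)$. The outer two bounds are the standard geometric--logarithmic and Lorentz--arithmetic (power-mean) inequalities, while the middle bound is the logarithmic-mean--Lorentz-mean inequality recorded in \citep{Lin74}; by homogeneity each reduces to a single-variable inequality in $t = a/b$, which is the one genuinely non-routine verification in the argument. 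For the equality analysis, every inequality in the scalar chain is strict unless $a = b$; hence equality holds throughout if and only if $L(x,y) = L_\pi(x,y)$ for all $x \neq y$, i.e. if and only if $L$ is $\pi$-reversible, which is the stated equality condition.

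Finally, the quadratic-form ordering and the three probabilistic consequences transfer verbatim from Theorem~\ref{thm:Peskun}. The relation $M_1 \succeq M_2$ between $\pi$-reversible generators gives $\langle -M_1 f, f\rangle_\pi \ge \langle -M_2 f, f\rangle_\pi$ directly from the quadratic-form expression \eqref{eq:quadraticform}; the Laplace-transform ordering of hitting times follows from the variational principle of \cite[Theorem~$3.1$]{HM18}, with the ordering of $\mathbb{E}_\pi(\tau_A)$ obtained in the $\lambda \to 0$ limit and that of $t_{av}$ from the eigentime identity together with the variational characterization of the eigenvalues of $-P_p$; the spectral-gap and relaxation-time orderings are the corresponding statement for the second eigenvalue; and the asymptotic-variance ordering follows from \cite[Theorem~$6$]{LM08}. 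The only point requiring care is that these comparison theorems apply to $\pi$-reversible generators, which is exactly what the first step guarantees. I expect the middle scalar inequality $\tfrac{a-b}{\ln a-\ln b}\le\bigl(\tfrac{a^{1/3}+b^{1/3}}{2}\bigr)^3$ to be the only real obstacle; everything else is either a citation or a direct reuse of the Peskun framework.
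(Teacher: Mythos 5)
Your proposal is correct and follows essentially the same route as the paper: the paper likewise obtains the entrywise chain $P_0 \leq C_{1,\ln} \leq P_{1/3} \leq P_1$ from the classical arithmetic--logarithmic--geometric (and Lorentz) mean inequality of \cite{Lin74}, and then omits the rest of the proof as identical to the machinery of Theorem~\ref{thm:Peskun} (quadratic forms, the variational principle of \cite{HM18}, the eigentime identity, and \cite[Theorem~6]{LM08}), exactly as you invoke it. Your additional remarks on the equality case and on where the genuinely non-routine scalar inequality lies are consistent with the paper's (terser) treatment.
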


Theorem \ref{thm:AMLMGM} is important from the following three perspectives: first, it serves as a mathematically beautiful generalization of the arithmetic-logarithmic-geometric mean inequality in the realm of Markov chain. Second, it offers new bounds on the spectral gap of the additive reversiblization $\lambda_2(P_1)$, which can be used to further bound the rate of convergence of the original non-reversible Markov chain along the lines of \cite{Fill91}. Third, it offers comparison theorems on fundamental hitting time and mixing time parameters of various reversible samplers such as $P_{1}$ (additive reversiblization) and $C_{1,\ln}$ (logarithmic mean reversiblization). This can yield practical suggestions on the choice of samplers.

\subsection{Generating new reversiblizations via dual mean and generalized Barker proposal}\label{subsec:dual}

Another notion of mean that can be utilized to generate possibly new reversiblizations is the dual mean $\mathcal{M}^*$ of a given mean function $\mathcal{M}(\cdot,\cdot)$. \textcolor{black}{According to \cite[equation $8$]{NN17}, a function $\mathcal{M}: \mathbb{R}_+ \times \mathbb{R}_+ \to \mathbb{R}_+$ is said to be a mean function if it satisfies the innerness property given by, for any $a,b \in \mathbb{R}_+$,
$$\min\{a,b\} \leq \mathcal{M}(a,b) \leq \max\{a,b\}.$$}
The so-called dual mean $\mathcal{M}^*$ of the mean function $\mathcal{M}$ is defined to be
$$\mathcal{M}^*(a,b) := \begin{cases}
	\dfrac{ab}{\mathcal{M}(a,b)}, \quad \text{if } a > 0 \, \text{or } b > 0,\\
	0, \quad \text{otherwise}.
\end{cases}.$$
The mean function $\mathcal{M}$ is said to be symmetric if $\mathcal{M}(a,b) = \mathcal{M}(b,a)$, and homogeneous if $\mathcal{M}(\lambda a, \lambda b) = \lambda \mathcal{M}(a,b)$ for any $\lambda \geq 0$.

The following theorem proposes an approach that systematically generates reversiblizations via dual mean:

\begin{theorem}\label{thm:dualrev}
	Given $L \in \mathcal{L}$ and a non-negative, symmetric and homogeneous mean function $\mathcal{M}$. The dual mean $D_{\mathcal{M}}$ is $\pi$-reversible, that is, $D_{\mathcal{M}} \in \mathcal{L}(\pi)$, where $D_{\mathcal{M}}$ is defined to be
	\begin{align}\label{def:dualm}
		D_{\mathcal{M}} := F_g
	\end{align}
	where $F_g$ is defined in \eqref{eq:Fg} with its balancing function $g$ given by 
	$$g(t) = \begin{cases}t\mathcal{M}^*(1/t,1) = \dfrac{1}{\mathcal{M}(1/t,1)}, \quad \textrm{if } t > 0,\\
	0, \quad \textrm{if } t = 0.\end{cases}.$$
\end{theorem}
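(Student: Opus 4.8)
The plan is to reduce everything to the single structural fact, established immediately after the definition \eqref{eq:Fg}, that $F_g \in \mathcal{L}(\pi)$ whenever $g$ is a valid balancing function, that is, whenever $g(t) = t g(1/t)$ for all $t > 0$. Since $D_{\mathcal{M}}$ is defined to be exactly $F_g$ for the stated $g$, it then suffices to verify that this particular $g$ is balancing; the detailed-balance bookkeeping, including the boundary cases where $L(x,y) = 0$, has already been carried out in full generality in the paragraph following \eqref{eq:Fg} and need not be repeated.

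First I would record that $g$ is well-defined and $\mathbb{R}_+$-valued. For $t > 0$ the innerness property gives $\mathcal{M}(1/t, 1) \geq \min\{1/t, 1\} > 0$, so the reciprocal in $g(t) = 1/\mathcal{M}(1/t, 1)$ makes sense and is non-negative. I would also confirm the displayed identity $t \mathcal{M}^*(1/t, 1) = 1/\mathcal{M}(1/t, 1)$ directly from the definition of the dual mean: since $1/t > 0$, we have $\mathcal{M}^*(1/t, 1) = \frac{(1/t) \cdot 1}{\mathcal{M}(1/t, 1)}$, and multiplying through by $t$ cancels the factor $1/t$, so the two expressions for $g(t)$ agree.

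The core computation is the balancing identity. Writing $g(s) = 1/\mathcal{M}(1/s, 1)$ and substituting $s = 1/t$, I obtain $g(1/t) = 1/\mathcal{M}(t, 1)$, whence $t g(1/t) = t/\mathcal{M}(t,1)$. The claim $g(t) = t g(1/t)$ thus reduces to the single scalar identity $\mathcal{M}(t, 1) = t\, \mathcal{M}(1/t, 1)$. This is precisely where the two assumed properties of the mean function enter: homogeneity with $\lambda = t$ gives $\mathcal{M}(t, 1) = \mathcal{M}(t \cdot 1, t \cdot (1/t)) = t\, \mathcal{M}(1, 1/t)$, and symmetry gives $\mathcal{M}(1, 1/t) = \mathcal{M}(1/t, 1)$; combining the two yields exactly the required identity. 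Hence $g$ is a valid balancing function and $D_{\mathcal{M}} = F_g \in \mathcal{L}(\pi)$.

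There is no serious obstacle here, as the argument is a short verification rather than a genuine difficulty. The only points that require care are the well-definedness of $g$, which is exactly where the innerness and non-negativity hypotheses of a mean function are used, and applying homogeneity with the specific scaling factor $t$ so that the arguments collapse onto the symmetric pair $(1/t, 1)$; making the direction of the two reductions match is the one place where a stray reciprocal-flip could slip in, so I would write out that step explicitly.
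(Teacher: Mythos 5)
Your proof is correct and follows essentially the same route as the paper's: both reduce the claim to verifying the balancing identity $g(t) = t\,g(1/t)$ (relying on the detailed-balance check for $F_g$ already done after \eqref{eq:Fg}), and both establish it via homogeneity with scaling factor $t$ followed by symmetry of $\mathcal{M}$. Your added remarks on well-definedness via innerness and on the identity $t\mathcal{M}^*(1/t,1) = 1/\mathcal{M}(1/t,1)$ are sound but minor elaborations, not a different argument.
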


Note that in the special case when $\mathcal{M}$ is the simple average, we retrieve the Barker proposal or the harmonic reversiblization. Thus, $D_{\mathcal{M}}$ can be broadly interpreted as a generalization of the Barker proposal and possibly give rise to new reversible samplers.

\begin{proof}[Proof of Theorem \ref{thm:dualrev}]
	To see that $g$ is a valid balancing function, we see that, for $t > 0$, we have
	\begin{align*}
		t g(1/t) = \mathcal{M}^*(t,1) 
		         = \dfrac{t}{\mathcal{M}(t,1)} 
		         = \dfrac{1}{\mathcal{M}(1/t,1)} 
		         = g(t),
	\end{align*}
	where we utilize the symmetric and homogeneous property of $\mathcal{M}$ in the third equality.
\end{proof}

\subsubsection{Dual power mean reversiblizations}

In this subsection, we take, for $p \in \mathbb{R}\backslash\{0\}$,
$$\mathcal{M}(a,b) = \left(\dfrac{a^p+b^p}{2}\right)^{1/p},$$
the power mean of $a,b$ with index $p$, which is symmetric, homogeneous and non-negative for $a,b \geq 0$. \eqref{def:dualm} now reads
$$D_{\mathcal{M}}(x,y) = \dfrac{L(x,y)L_{\pi}(x,y)}{ \left(\dfrac{L(x,y)^p+L_{\pi}(x,y)^p}{2}\right)^{1/p}},$$
in which we retrieve the Barker proposal when we take $p = 1$.

Analogous to Theorem \ref{thm:Peskun}, we can develop a dual Peskun ordering between these dual power mean reversiblizations using the classical power mean inequality.

\subsubsection{Dual Stolarsky mean reversiblizations}

Recall that in Section \ref{subsubsec:stolarsky}, we introduce the Stolarsky mean, which gives, for $p,q \in \mathbb{R}_+\backslash\{0,1\}$ and $p \neq q$, 
$$\mathcal{M}(a,b) = \left(\dfrac{q(a^p - b^p)}{p (a^q - b^q)}\right)^{1/(p-q)},$$ 
which is symmetric, homogeneous and non-negative for $a,b \geq 0$. The dual Stolarsky mean reversiblization \eqref{def:dualm} now reads
$$D_{\mathcal{M}}(x,y) = \dfrac{L(x,y)L_{\pi}(x,y)}{ \left(\dfrac{q(L^p(x,y) - L_{\pi}^p(x,y))}{p (L^q(x,y) - L_{\pi}^q(x,y))}\right)^{1/(p-q)}}.$$

\subsubsection{Dual logarithmic mean reversiblizations}

Recall that in Section \ref{subsubsec:log}, we introduce the logarithmic mean, which gives, for $p \in \mathbb{R}_+\backslash\{0,1\}$, 
$$\mathcal{M}(a,b) = \left(\dfrac{a^p - b^p}{p (\ln a - \ln b)}\right)^{1/p},$$ 
which is symmetric, homogeneous and non-negative for $a,b \geq 0$. The dual logarithmic mean reversiblization \eqref{def:dualm} now reads
$$D_{\mathcal{M}}(x,y) = \dfrac{L(x,y)L_{\pi}(x,y)}{\left(\dfrac{L^p(x,y) - L_{\pi}^p(x,y)}{p (\ln L(x,y) - \ln L_{\pi}(x,y))}\right)^{1/p}}.$$

Analogous to Theorem \ref{thm:dualPeskun}, we can develop a dual Peskun ordering between these the dual logarithmic mean reversiblization and dual arithmetic mean reversiblization using the arithmetic-logarithmic-geometric mean inequality.

\subsection{Generating new reversiblizations via balancing functions}\label{sec:third}

In this subsection, we shall generate possibly new reversiblizations via the balancing function approach. Define
\begin{align}\label{eq:Fcalg}
	\mathcal{F}_g(x,y) := \begin{cases}
		0, \quad \textrm{if } L(x,y) = L_{\pi}(x,y), \\
		g(0) L_{\pi}(x,y), \quad \textrm{if } L(x,y) = 0 \textrm{ and } L_{\pi}(x,y) > 0, \\
		g\left(\dfrac{L_{\pi}(x,y)}{L(x,y)}\right) L(x,y), \quad \textrm{otherwise},
	\end{cases}
\end{align}
which is a locally-balanced Markov chain generated by a balancing function $g$. Comparing between $F_g$ as introduced in \eqref{eq:Fg} and $\mathcal{F}_g$, the difference lies in the value on the set $\{(x,y);~L(x,y) = L_{\pi}(x,y)\}$. A rich source of such $g$ is to consider $(f+f^*)/2$, where we recall $f$ is a non-negative convex function with $f^*$ being its conjugate as introduced in Section \ref{sec:prelim} which serves as a generator of the $f$-divergence $D_f$. In the following sections, we shall give a non-exhaustive list of new reversiblizations generated by a convex $f$ under this approach. We refer readers to \cite{SV16} and the references therein for other possible and common choices of $f$ that have been investigated in the information theory literature but are not listed in subsequent sections.

As pointed out by a reviewer, we see that this family of $\mathcal{F}_g$ generated by $g = (f+f^*)/2$ satisfies $g(1) = 0$. As $g$ generates an information divergence $D_g$ which quantifies the information difference, the family of reversiblizations generated by such $g$ can be broadly interpreted as a "difference" between $L$ and $L_{\pi}$. This is different from previous generalized mean type reversiblizations that we have covered in this paper which can be intuitively understood as generalized averages between $L$ and $L_{\pi}$. At $t = 1$ such that $g(1) = 0$, the "difference" between $L$ and $L_{\pi}$ is zero. On the other hand, other types of reversiblizations such as the power mean reversiblizations whose balancing function $t \mapsto \left(\dfrac{t^{\alpha}+1}{2}\right)^{1/\alpha}$ is $1$ at $t = 1$. As another example, the logarithmic mean reversiblization in Theorem \ref{thm:logmean} has a balancing function $t \mapsto \left(\dfrac{t - 1}{ \ln t}\right)^{1/p}$, which is again $1$ at $t = 1$.

Let us illustrate this with a concrete example. We take $f(t) = |t-1|$, from which the $f$-divergence generated is the total variation distance. We also see that $g = f = f^* = (f+f^*)/2$. Define $TV := \mathcal{F}_g$ and \eqref{eq:Fcalg} now reads, for $L(x,y) \neq L_{\pi}(x,y)$ and both are non-zero,
$$TV(x,y) = |L(x,y) - L_{\pi}(x,y)|,$$
that we call the total variation reversiblization. Using the equality that for $a,b \in \mathbb{R}$,
\begin{align*}
	\max\{a,b\} - \min\{a,b\} &= |a-b|, 
\end{align*}
we thus see that
\begin{align}\label{eq:maxminTV}
	\underbrace{P_{\infty}}_{\substack{\text{Power mean reversiblization} \\ \text{at } p = \infty}} - \underbrace{P_{-\infty}}_{\text{Metropolis-Hastings reversiblization}} &= \underbrace{TV}_{\text{"difference" between $L$ and $L_{\pi}$}}.
\end{align}
Furthermore, we can utilize  \eqref{eq:maxminTV} to develop new eigenvalue inequalities relating the eigenvalues of $P_{\infty}, P_{-\infty}$ and $TV$. By recalling that $\lambda_2(L,\pi)$ is the spectral gap of a given $L \in \mathcal{L}(\pi)$ as introduced in \eqref{def:spectralgap}, we thus have
$$\lambda_2(P_{\infty},\pi) \geq \lambda_2(P_{-\infty},\pi) + \lambda_2(TV,\pi).$$
Other eigenvalues can be related via the Weyl's inequality approach as in \cite{Choi16}. These eigenvalue bounds are important since we can utilize the eigenvalues of both $P_{\infty}$ and $P_{-\infty}$ to bound the convergence rate of the original non-reversible chain with generator $L$ using the pseudo-spectral gap approach as in \cite{Choi16}. This also highlights the three approaches to generate reversiblizations in this paper are interconnected. We shall not pursue this direction further in this manuscript.

\subsubsection{Squared Hellinger reversiblization}

In the second example, we take $f(t) = (\sqrt{t}-1)^2$, where the $f$-divergence generated is the squared Hellinger distance as introduced in Corollary \ref{thm:hellinger}. We also see that $f = f^* = (f+f^*)/2$, and \eqref{eq:Fcalg} becomes, for $L(x,y) \neq L_{\pi}(x,y)$ and both are non-zero,
$$\mathcal{F}_f(x,y) = \underbrace{(\sqrt{L(x,y)} - \sqrt{L_{\pi}(x,y)})^2}_{\text{"difference" between $L$ and $L_{\pi}$}},$$
that we call the squared Hellinger reversiblization. 

\subsubsection{Jensen-Shannon reversiblization}

In the third example, we take $f(t) = t \ln t - (1+t)\ln((1+t)/2)$, where the $f$-divergence generated is the Jensen-Shannon divergence as introduced in Definition \ref{def:JSdiv}. We also see that $f = f^* = (f+f^*)/2$, and \eqref{eq:Fcalg} becomes, for $L(x,y) \neq L_{\pi}(x,y)$ and both are non-zero,
$$\mathcal{F}_f(x,y) = \underbrace{L_{\pi}(x,y) \ln \dfrac{L_{\pi}(x,y)}{L(x,y)} - (L(x,y) + L_{\pi}(x,y)) \ln \left(\dfrac{L(x,y) + L_{\pi}(x,y)}{2 L(x,y)}\right)}_{\text{"difference" between $L$ and $L_{\pi}$}},$$
that we call the Jensen-Shannon reversiblization.

\subsubsection{Vincze-Le Cam reversiblization}

In the fourth example, we take $f(t) = \frac{(t-1)^2}{1+t}$, where the $f$-divergence generated is the Vincze-Le Cam divergence as introduced in Definition \ref{def:VLCdiv}. We also see that $f = f^* = (f+f^*)/2$, and \eqref{eq:Fcalg} becomes, for $L(x,y) \neq L_{\pi}(x,y)$ and both are non-zero,
$$\mathcal{F}_f(x,y) = \underbrace{\dfrac{(L(x,y) - L_{\pi}(x,y))^2}{L(x,y) + L_{\pi}(x,y)}}_{\text{"difference" between $L$ and $L_{\pi}$}},$$
that we call the Vincze-Le Cam reversiblization.

\subsubsection{Jeffrey reversiblization}

In the final example, we take $f(t) = (t-1) \ln t$, where the $f$-divergence generated is known as the Jeffrey's divergence. We also see that $f = f^* = (f+f^*)/2$, and \eqref{eq:Fcalg} becomes, for $L(x,y) \neq L_{\pi}(x,y)$ and both are non-zero,
$$\mathcal{F}_f(x,y) = \underbrace{(L(x,y) - L_{\pi}(x,y))(\ln L(x,y) - \ln L_{\pi}(x,y))}_{\text{"difference" between $L$ and $L_{\pi}$}},$$
that we call the Jeffrey reversiblization.

\section*{Acknowledgements}

We would like to acknowledge the careful reading and constructive comments of two reviewers that have improved the quality and the presentation of the manuscript. Michael Choi would like to thank the kind hospitality of Geoffrey Wolfer and RIKEN AIP for hosting him for a visit, in which this work was initiated. He would also like to thank Youjia Wang for his assistance in producing Figure \ref{fig:centroids}. He acknowledges the financial support from the startup grant of the National University of Singapore and the Yale-NUS College, and a Ministry of Education Tier 1 Grant under the Data for Science and Science for Data collaborative scheme with grant number 22-5715-P0001. Geoffrey Wolfer is supported by the Special Postdoctoral Researcher Program (SPDR) of RIKEN and by the Japan Society for the Promotion of Science KAKENHI under Grant 23K13024.
\bibliographystyle{abbrvnat}
\bibliography{thesis}

\end{document}